\documentclass[]{interact}
\usepackage{latexsym}
\usepackage{anyfontsize} 
\usepackage{graphicx}
\usepackage{tikz}
\usepackage{subfig}
\usepackage{pgfplots}
\usepgfplotslibrary{fillbetween}
\usepackage{hyperref}
\usepackage{url}
\usepackage{breakurl}
\newcommand{\bburl}[1]{\textcolor{blue}{\url{#1}}}

\usepackage{mathtools}

\usepackage{todonotes}

\makeatletter
\newcommand{\monthyear}[1]{%
  \def\@monthyear{\uppercase{#1}}}
\newcommand{\volnumber}[1]{%
  \def\@volnumber{\uppercase{#1}}}
\makeatother

\newcommand{\bx}{\mathbf{x}}
\newcommand{\by}{\mathbf{y}}

\newcommand{\bbC}{\mathbb{C}}

\newcommand{\bbN}{\mathbb{N}}

\newcommand{\bbQ}{\mathbb{Q}}
\newcommand{\bbR}{\mathbb{R}}

\newcommand{\bbT}{\mathbb{T}}

\newcommand{\bbZ}{\mathbb{Z}}

\newcommand{\cO}{\mathcal{O}}

\newcommand{\frs}{\mathfrak{s}}

\newcommand{\ra}{\rightarrow}

\newcommand{\sm}{\setminus}
\newcommand{\sbse}{\subseteq}

\newcommand{\spse}{\supseteq}

\newcommand{\iu}{\mathrm{i}}

\allowdisplaybreaks

\theoremstyle{definition}
\newtheorem{thm}{Theorem}[section]
\newtheorem{rem}[thm]{Remark}

\newtheorem{lem}[thm]{Lemma}

\newtheorem{prop}[thm]{Proposition}

\newtheorem*{thm*}{Theorem}
\newtheorem*{rem*}{Remark}
\newtheorem*{folg*}{Folgerung}
\newtheorem*{examples*}{Beispiele}
\newtheorem*{ex*}{Beispiel}
\newtheorem*{lem*}{Lemma}
\newtheorem*{prop*}{Proposition}
\newtheorem*{defi*}{Definition}
\newtheorem*{exercise*}{Übung}
\newtheorem*{conj*}{Conjecture}
\newtheorem*{q*}{Question}

\usepackage{cleveref}

\numberwithin{table}{section} 
\numberwithin{figure}{section}

\begin{document}

\monthyear{Month Year}
\volnumber{Volume, Number}
\setcounter{page}{1}

\title{Precise Asymptotics and Exact Formulas for Tensor Product Energies of Fibonacci Lattices}

\author{
\name{Melia Haase\textsuperscript{a} and Nicolas Nagel\textsuperscript{b}\thanks{Corresponding author: Nicolas Nagel (email: \texttt{nicolas.nagel@ricam.oeaw.ac.at}, ORCiD: 0009-0004-3362-3543)}}
\affil{\textsuperscript{a}Chemnitz University of Technology, Faculty of Mathematics, Reichenhainer Straße 39, 09126 Chemnitz, Germany \\ \textsuperscript{b}RICAM, Altenberger Straße 69, 4040 Linz, Austria}
}

\maketitle

\begin{abstract}
	We consider the asymptotics of sums of the form
$$
\frac1{F_n^\sigma} \sum_{m = 1}^{F_n-1} \frac{f(m/F_n)}{\left|{\sin(\pi m/F_n)}\right|^\sigma} \frac{f(F_{n-1}m/F_n)}{\left|{\sin(\pi F_{n-1}m/F_n)}\right|^\sigma}
$$
where $(F_n)_{n \in \bbN} = (1, 1, 2, 3, 5, 8, 13, \dots)$ are the Fibonacci numbers. Such sums appear, for example, in the context of discrepancy theory and numerical integration methods reformulated as energy minimization problems.

We show that for parameters $\sigma > 1$ and a large class of functions $f$ the above sum behaves asymptotically like
$$
C n + D + O\left((1-\varepsilon)^{n}\right)
$$
for some constants $C$ and $D$. These constants can be given via infinite series connected to the Dedekind zeta function over the algebraic number field $\bbQ(\sqrt5)$.

In special cases we even observe simple closed-form expressions for such sums as above, explicitly proving that
$$
\sum_{m=1}^{F_n-1} \frac1{\sin(\pi m/F_n)^2} \frac1{\sin(\pi F_{n-1} m/F_n)^2} = \frac{4n}{75} F_{2n} - \frac{17}{225}F_n^2 - (-1)^n \frac2{15} - \frac19.
$$
\end{abstract}

\begin{keywords}
	Tensor product energies, Fibonacci lattices, trigonometric sums
\end{keywords}

\section{Introduction}

Let $\bbT = \bbR / \bbZ$ be the one-dimensional unit torus. Throughout, we will use the identification $\bbT \simeq [0, 1)$. We consider the problem of uniformly distributing a given number of points in the two-dimensional torus $[0, 1)^2$. To analyze this problem rigorously we need to quantify the \emph{uniformity} of a finite point set $X \subseteq [0, 1)^2$.

\begin{figure}[t]
						\hspace{1.1cm}\begin{tikzpicture}
							\begin{axis}[
								plot box ratio = 1 1,
								xmin=-1/21,xmax=22/21,ymin=-1/21,ymax=22/21,
								font=\footnotesize,
								height=0.33\textwidth,
								width=0.33\textwidth,
								xticklabel = \empty,
								yticklabel = \empty,
								ytick style={draw=none},
								xtick style={draw=none},
								]
								\addplot[only marks,black,mark=*,mark size=1.5pt,mark options={solid}] coordinates {
									(0.0, 0.625676917054483) (0.022222222222222223, 0.9333333333333333) (0.044444444444444446, 0.03056318486849119) (0.06666666666666667, 0.4119082359417273) (0.08888888888888889, 0.06666666666666667) (0.1111111111111111, 0.3333333333333333) (0.13333333333333333, 0.6) (0.15555555555555556, 0.24994082668700834) (0.17777777777777778, 0.6666666666666666) (0.2, 0.4) (0.2222222222222222, 0.3333333333333333) (0.24444444444444444, 0.6) (0.26666666666666666, 0.7183238984171398) (0.28888888888888886, 0.6666666666666666) (0.3111111111111111, 0.7333333333333333) (0.3333333333333333, 0.3145283383382188) (0.35555555555555557, 0.6) (0.37777777777777777, 0.6666666666666666) (0.4, 0.8902422188194887) (0.4222222222222222, 0.7333333333333333) (0.4444444444444444, 0.629526454390172) (0.4666666666666667, 0.6) (0.4888888888888889, 0.6666666666666666) (0.5111111111111111, 0.3333333333333333) (0.5333333333333333, 0.4) (0.5555555555555556, 0.08978156614769994) (0.5777777777777777, 0.26666666666666666) (0.6, 0.9929096590797771) (0.6222222222222222, 0.3333333333333333) (0.6444444444444445, 0.4) (0.6666666666666666, 0.38581441742653366) (0.6888888888888889, 0.26666666666666666) (0.7111111111111111, 0.3333333333333333) (0.7333333333333333, 0.8012693286377376) (0.7555555555555555, 0.4) (0.7777777777777778, 0.6666666666666666) (0.8, 0.6) (0.8222222222222222, 0.3333333333333333) (0.8444444444444444, 0.25305347568654823) (0.8666666666666667, 0.4) (0.8888888888888888, 0.6666666666666666) (0.9111111111111111, 0.9333333333333333) (0.9333333333333333, 0.7084863370683493) (0.9555555555555556, 0.20872864734946917) (0.9777777777777777, 0.06666666666666667)
								};
								\addplot[gray,mark options={solid}] coordinates {
									(0, -0.05) (0, 1.05)
								};
								\addplot[gray,mark options={solid}] coordinates {
									(1.0, -0.05) (1.0, 1.05)
								};
								\addplot[gray,mark options={solid}] coordinates {
									(-0.05, 0) (1.05, 0)
								};
								\addplot[gray,mark options={solid}] coordinates {
									(-0.05, 1.0) (1.05, 1.0)
								};
							\end{axis}
						\end{tikzpicture} \hspace{5mm}
						\begin{tikzpicture}
							\begin{axis}[
								plot box ratio = 1 1,
								xmin=-1/21,xmax=22/21,ymin=-1/21,ymax=22/21,
								font=\footnotesize,
								height=0.33\textwidth,
								width=0.33\textwidth,
								xticklabel = \empty,
								yticklabel = \empty,
								ytick style={draw=none},
								xtick style={draw=none},
								]
								\addplot[only marks,black,mark=*,mark size=1.5pt,mark options={solid}] coordinates {
									(0.0, 0.06666666666666667) (0.022222222222222223, 0.28888888888888886) (0.044444444444444446, 0.5333333333333333) (0.06666666666666667, 0.13333333333333333) (0.08888888888888889, 0.15555555555555556) (0.1111111111111111, 0.0) (0.13333333333333333, 0.5777777777777777) (0.15555555555555556, 0.4444444444444444) (0.17777777777777778, 0.7555555555555555) (0.2, 0.6222222222222222) (0.2222222222222222, 0.9333333333333333) (0.24444444444444444, 0.2) (0.26666666666666666, 0.7777777777777778) (0.28888888888888886, 0.24444444444444444) (0.3111111111111111, 0.8) (0.3333333333333333, 0.6666666666666666) (0.35555555555555557, 0.6) (0.37777777777777777, 0.9555555555555556) (0.4, 0.8666666666666667) (0.4222222222222222, 0.022222222222222223) (0.4444444444444444, 0.17777777777777778) (0.4666666666666667, 0.8888888888888888) (0.4888888888888889, 0.1111111111111111) (0.5111111111111111, 0.3111111111111111) (0.5333333333333333, 0.2222222222222222) (0.5555555555555556, 0.044444444444444446) (0.5777777777777777, 0.26666666666666666) (0.6, 0.7111111111111111) (0.6222222222222222, 0.9111111111111111) (0.6444444444444445, 0.6444444444444445) (0.6666666666666666, 0.37777777777777777) (0.6888888888888889, 0.5555555555555556) (0.7111111111111111, 0.9777777777777777) (0.7333333333333333, 0.8222222222222222) (0.7555555555555555, 0.5111111111111111) (0.7777777777777778, 0.35555555555555557) (0.8, 0.6888888888888889) (0.8222222222222222, 0.08888888888888889) (0.8444444444444444, 0.4888888888888889) (0.8666666666666667, 0.4222222222222222) (0.8888888888888888, 0.3333333333333333) (0.9111111111111111, 0.7333333333333333) (0.9333333333333333, 0.4666666666666667) (0.9555555555555556, 0.4) (0.9777777777777777, 0.8444444444444444)
								};
								\addplot[gray,mark options={solid}] coordinates {
									(0, -0.05) (0, 1.05)
								};
								\addplot[gray,mark options={solid}] coordinates {
									(1.0, -0.05) (1.0, 1.05)
								};
								\addplot[gray,mark options={solid}] coordinates {
									(-0.05, 0) (1.05, 0)
								};
								\addplot[gray,mark options={solid}] coordinates {
									(-0.05, 1.0) (1.05, 1.0)
								};
							\end{axis}
						\end{tikzpicture} \hspace{5mm}
						\begin{tikzpicture}
							\begin{axis}[
								plot box ratio = 1 1,
								xmin=-1/21,xmax=22/21,ymin=-1/21,ymax=22/21,
								font=\footnotesize,
								height=0.33\textwidth,
								width=0.33\textwidth,
								xticklabel = \empty,
								yticklabel = \empty,
								ytick style={draw=none},
								xtick style={draw=none},
								]
								\addplot[only marks,black,mark=*,mark size=1.5pt,mark options={solid}] coordinates {
									(0.0, 0.8444444444444444) (0.022222222222222223, 0.4888888888888889) (0.044444444444444446, 0.1111111111111111) (0.06666666666666667, 0.7555555555555555) (0.08888888888888889, 0.35555555555555557) (0.1111111111111111, 0.022222222222222223) (0.13333333333333333, 0.6) (0.15555555555555556, 0.2222222222222222) (0.17777777777777778, 0.9111111111111111) (0.2, 0.6444444444444445) (0.2222222222222222, 0.4222222222222222) (0.24444444444444444, 0.06666666666666667) (0.26666666666666666, 0.8) (0.28888888888888886, 0.26666666666666666) (0.3111111111111111, 0.5111111111111111) (0.3333333333333333, 0.9777777777777777) (0.35555555555555557, 0.6888888888888889) (0.37777777777777777, 0.17777777777777778) (0.4, 0.37777777777777777) (0.4222222222222222, 0.8666666666666667) (0.4444444444444444, 0.5777777777777777) (0.4666666666666667, 0.13333333333333333) (0.4888888888888889, 0.7333333333333333) (0.5111111111111111, 0.3333333333333333) (0.5333333333333333, 0.9333333333333333) (0.5555555555555556, 0.4666666666666667) (0.5777777777777777, 0.044444444444444446) (0.6, 0.6222222222222222) (0.6222222222222222, 0.2) (0.6444444444444445, 0.8222222222222222) (0.6666666666666666, 0.28888888888888886) (0.6888888888888889, 0.5333333333333333) (0.7111111111111111, 0.0) (0.7333333333333333, 0.7777777777777778) (0.7555555555555555, 0.4) (0.7777777777777778, 0.08888888888888889) (0.8, 0.6666666666666666) (0.8222222222222222, 0.24444444444444444) (0.8444444444444444, 0.8888888888888888) (0.8666666666666667, 0.4444444444444444) (0.8888888888888888, 0.7111111111111111) (0.9111111111111111, 0.15555555555555556) (0.9333333333333333, 0.5555555555555556) (0.9555555555555556, 0.9555555555555556) (0.9777777777777777, 0.3111111111111111)
								};
								\addplot[gray,mark options={solid}] coordinates {
									(0, -0.05) (0, 1.05)
								};
								\addplot[gray,mark options={solid}] coordinates {
									(1.0, -0.05) (1.0, 1.05)
								};
								\addplot[gray,mark options={solid}] coordinates {
									(-0.05, 0) (1.05, 0)
								};
								\addplot[gray,mark options={solid}] coordinates {
									(-0.05, 1.0) (1.05, 1.0)
								};
							\end{axis}
						\end{tikzpicture}
                        \caption{Three point sets $X_1, X_2, X_3$ (from left to right) of size $N=45$ with $e_{2, 6}(X_1) = 0.037156\dots$, $e_{2, 6}(X_2) = 0.006865\dots$ and $e_{2, 6}(X_3) = 0.001879\dots$ ($(\sigma, p) = (2, 6)$ corresponding to the notion of periodic $L_2$-discrepancy \cite{HKP21, HO16}). Observe how point sets with high and low density regions (\emph{clusters} and \emph{holes} respectively) correspond to bigger $e_{\sigma, p}(X)$.}
\label{fig:uniform}
\end{figure}

\subsection{Uniformity via energy minimization}

\emph{Discrepancy theory} and the analysis of \emph{quasi-Monte Carlo integration} are natural areas where the notion of the uniformity of a point set shows up. Background and details in these directions can be found for example in \cite{DT97, Mat10, Mon94, Nie78, Nie92, NW10}. For the purposes of this paper, a finite point set $X \sbse [0, 1)^2, \#X = N$ (which may be a multiset, that is it may contain the same point multiple times) will be thought of as uniform if it minimizes
$$
e_{\sigma, p}(X) \coloneqq -1 + \frac1{N^2} \sum_{(x_1, x_2), (y_1, y_2) \in X} K_{\sigma, p}(x_1-y_1) K_{\sigma, p}(x_2-y_2),
$$
where $\sigma > 1$, $p > 0$ are parameters and the function $K_{\sigma, p}: [0, 1) \ra \bbR$ is given by
$$
K_{\sigma, p}(t) \coloneqq 1 + p \sum_{m \neq 0} \frac{\exp(2\pi\iu m t)}{|2\pi m|^\sigma}.
$$
Figure \ref{fig:uniform} illustrates how $e_{\sigma, p}(X)$ corresponds to the ``uniformity'' of $X$. The quantities $e_{\sigma, p}(X)$ can be interpreted as the squared worst-case error of the quasi-Monte Carlo integration algorithm for periodic functions of prescribed smoothness, see for example \cite{BNR25, Dic07, HO16}, formalizing the connection between low $e_{\sigma, p}(X)$ and uniform $X$. The case of $\sigma=2$ corresponds to the notions of \emph{periodic $L_2$-discrepancy} and \emph{diaphony}, see \cite{BNR25, HKP21} for details.

For a given $N \in \bbN$ the main objective is to find a point set $X \sbse [0, 1)^2, \#X=N$ such that for all $Y \sbse [0, 1)^2, \#Y=N$ it holds $e_{\sigma, p}(Y) \geq e_{\sigma, p}(X)$. This is equivalent to minimizing the double sum
$$
E_{\sigma, p}(X) \coloneqq \sum_{(x_1, x_2), (y_1, y_2) \in X} K_{\sigma, p}(x_1-y_1) K_{\sigma, p}(x_2-y_2),
$$
which (in analogy to the \emph{Thomson problem} \cite{Tho04}, also see \cite{BHS19, CK07, HS04}) we interpret as the \emph{energy} of a point configuration $X$ with \emph{potential interaction} $K_{\sigma, p}(x_1-y_1) K_{\sigma, p}(x_2-y_2)$ between two points $(x_1, x_2), (y_1, y_2) \in [0, 1)^2$. If $\sigma = 2s, s \in \bbN$ is an even integer, we have more explicitly
$$
K_{2s, p}(t) = 1 + p \frac{(-1)^{s-1}}{(2s)!} B_{2s}(t)
$$
with the (periodized) \emph{Bernoulli polynomials} $B_{2s}(t)$ of degree $2s$, in general given via
$$
\sum_{n=0}^\infty B_n(t) \frac{x^n}{n!} = \frac{x}{e^x-1} e^{tx}
$$
for $t \in [0, 1)$. The next section introduces a class of point sets which will yield candidates for minimizers of $e_{\sigma, p}(X)$.

\begin{figure}[t]
						\begin{tikzpicture}
							\begin{axis}[
								plot box ratio = 1 1,
								xmin=-1/21,xmax=22/21,ymin=-1/21,ymax=22/21,
								font=\footnotesize,
								height=0.28\textwidth,
								width=0.28\textwidth,
								xticklabel = \empty,
								yticklabel = \empty,
								ytick style={draw=none},
								xtick style={draw=none},
								]
								\addplot[only marks,black,mark=*,mark size=1.5pt,mark options={solid}] coordinates {
									(0, 0) (1/3, 2/3) (2/3, 1/3)
								};
								\addplot[gray,mark options={solid}] coordinates {
									(0, -0.05) (0, 1.05)
								};
								\addplot[gray,mark options={solid}] coordinates {
									(1.0, -0.05) (1.0, 1.05)
								};
								\addplot[gray,mark options={solid}] coordinates {
									(-0.05, 0) (1.05, 0)
								};
								\addplot[gray,mark options={solid}] coordinates {
									(-0.05, 1.0) (1.05, 1.0)
								};
							\end{axis}
						\end{tikzpicture}
						\begin{tikzpicture}
							\begin{axis}[
								plot box ratio = 1 1,
								xmin=-1/21,xmax=22/21,ymin=-1/21,ymax=22/21,
								font=\footnotesize,
								height=0.28\textwidth,
								width=0.28\textwidth,
								xticklabel = \empty,
								yticklabel = \empty,
								ytick style={draw=none},
								xtick style={draw=none},
								]
								\addplot[only marks,black,mark=*,mark size=1.5pt,mark options={solid}] coordinates {
									(0, 0) (1/5, 3/5) (2/5, 1/5) (3/5, 4/5) (4/5, 2/5)
								};
								\addplot[gray,mark options={solid}] coordinates {
									(0, -0.05) (0, 1.05)
								};
								\addplot[gray,mark options={solid}] coordinates {
									(1.0, -0.05) (1.0, 1.05)
								};
								\addplot[gray,mark options={solid}] coordinates {
									(-0.05, 0) (1.05, 0)
								};
								\addplot[gray,mark options={solid}] coordinates {
									(-0.05, 1.0) (1.05, 1.0)
								};
							\end{axis}
						\end{tikzpicture}
						\begin{tikzpicture}
							\begin{axis}[
								plot box ratio = 1 1,
								xmin=-1/21,xmax=22/21,ymin=-1/21,ymax=22/21,
								font=\footnotesize,
								height=0.28\textwidth,
								width=0.28\textwidth,
								xticklabel = \empty,
								yticklabel = \empty,
								ytick style={draw=none},
								xtick style={draw=none},
								]
								\addplot[only marks,black,mark=*,mark size=1.5pt,mark options={solid}] coordinates {
									(0, 0) (1/8, 5/8) (2/8, 2/8) (3/8, 7/8) (4/8, 4/8) (5/8, 1/8) (6/8, 6/8) (7/8, 3/8)
								};
								\addplot[gray,mark options={solid}] coordinates {
									(0, -0.05) (0, 1.05)
								};
								\addplot[gray,mark options={solid}] coordinates {
									(1.0, -0.05) (1.0, 1.05)
								};
								\addplot[gray,mark options={solid}] coordinates {
									(-0.05, 0) (1.05, 0)
								};
								\addplot[gray,mark options={solid}] coordinates {
									(-0.05, 1.0) (1.05, 1.0)
								};
							\end{axis}
						\end{tikzpicture}
						\begin{tikzpicture}
							\begin{axis}[
								plot box ratio = 1 1,
								xmin=-1/21,xmax=22/21,ymin=-1/21,ymax=22/21,
								font=\footnotesize,
								height=0.28\textwidth,
								width=0.28\textwidth,
								xticklabel = \empty,
								yticklabel = \empty,
								ytick style={draw=none},
								xtick style={draw=none},
								]
								\addplot[only marks,black,mark=*,mark size=1.5pt,mark options={solid}] coordinates {
									(0/13, 0/13) (1/13, 8/13) (2/13, 3/13) (3/13, 11/13) (4/13, 6/13) (5/13, 1/13) (6/13, 9/13) (7/13, 4/13) (8/13, 12/13) (9/13, 7/13) (10/13, 2/13) (11/13, 10/13) (12/13, 5/13)
								};
								\addplot[gray,mark options={solid}] coordinates {
									(0, -0.05) (0, 1.05)
								};
								\addplot[gray,mark options={solid}] coordinates {
									(1.0, -0.05) (1.0, 1.05)
								};
								\addplot[gray,mark options={solid}] coordinates {
									(-0.05, 0) (1.05, 0)
								};
								\addplot[gray,mark options={solid}] coordinates {
									(-0.05, 1.0) (1.05, 1.0)
								};
							\end{axis}
						\end{tikzpicture}
						\begin{tikzpicture}
							\begin{axis}[
								plot box ratio = 1 1,
								xmin=-1/21,xmax=22/21,ymin=-1/21,ymax=22/21,
								font=\footnotesize,
								height=0.28\textwidth,
								width=0.28\textwidth,
								xticklabel = \empty,
								yticklabel = \empty,
								ytick style={draw=none},
								xtick style={draw=none},
								]
								\addplot[only marks,black,mark=*,mark size=1.5pt,mark options={solid}] coordinates {
									(0/21, 0/21) (1/21, 13/21) (2/21, 5/21) (3/21, 18/21) (4/21, 10/21) (5/21, 2/21) (6/21, 15/21) (7/21, 7/21) (8/21, 20/21) (9/21, 12/21) (10/21, 4/21) (11/21, 17/21) (12/21, 9/21) (13/21, 1/21) (14/21, 14/21) (15/21, 6/21) (16/21, 19/21) (17/21, 11/21) (18/21, 3/21) (19/21, 16/21) (20/21, 8/21) 
								};
								\addplot[gray,mark options={solid}] coordinates {
									(0, -0.05) (0, 1.05)
								};
								\addplot[gray,mark options={solid}] coordinates {
									(1.0, -0.05) (1.0, 1.05)
								};
								\addplot[gray,mark options={solid}] coordinates {
									(-0.05, 0) (1.05, 0)
								};
								\addplot[gray,mark options={solid}] coordinates {
									(-0.05, 1.0) (1.05, 1.0)
								};
							\end{axis}
						\end{tikzpicture}
                        \caption{Fibonacci lattices of size $3, 5, 8, 13$ and $21$.}
\label{fig:fib_lats}
\end{figure}

\subsection{Lattices}

Given two numbers $N \in \bbN, h \in \{1, \dots, N\}$ with $\gcd(N, h) = 1$, the \emph{(rational/integration) lattice} $\Lambda_{N, h} \sbse [0, 1)^2$ is given by
$$
\Lambda_{N, h} \coloneqq \left\{\left(\frac kN, \left\{ \frac{hk}N\right\}\right): k=0, 1, \dots, N-1\right\},
$$
where $\{x\} = x-\lfloor x \rfloor$ denotes the fractional part of $x \in \bbR$. Letting $(F_n)_n$ denote the \emph{Fibonacci numbers} given by $F_0=0, F_1=1$ and the recurrence $F_n = F_{n-1} + F_{n-2}$, the special lattices given by
$$
\Phi_n \coloneqq \Lambda_{F_n, F_{n-1}}
$$
are called \emph{Fibonacci lattices}, see Figure \ref{fig:fib_lats}. It is known that Fibonacci lattices achieve the optimal asymptotic rate \cite{DTU18}
$$
e_{\sigma, p}(\Phi_n) \asymp \frac{n}{F_n^\sigma}.
$$
It has been observed on multiple occasions, based on numerical computations \cite{HO16} and partial results \cite{BNR25, Nag26}, also see \cite{NagDiss}, that one even seems to have $e_{\sigma, p}(Y) \geq e_{\sigma, p}(\Phi_n)$ for all $Y \subseteq [0, 1)^2, \# Y = F_n$ given that $0 < p \leq \overline p(\sigma)$ for some constant $\overline p(\sigma) > 0$. That is, Fibonacci lattices are conjectured to be global minimizers for a certain range of parameters. This, however, remains an open problem. In this paper we aim to study the special structure of the Fibonacci lattice by considering the asymptotics of $e_{\sigma, p}(\Phi_n)$ in more detail.

To start, we will consider more general energies of the form
\begin{align} \label{eq:tpe_general}
    E_c(X) \coloneqq \sum_{\bx, \by \in X} c(x_1-y_1) c(x_2-y_2),
\end{align}
where $X \sbse [0, 1)^2$ and $c: [0, 1) \ra \bbR$ is a $1$-periodic, continuous and symmetric function in the sense of $c(t) = c(1-t)$ for $t \in [0, 1)$. Functions $c$ fulfilling these criteria will be called \emph{potentials}. The systematic study of these \emph{tensor product energies} has recently been initiated in \cite{BNR25, Nag25, Nag26}. Such energies behave well together with the structure of lattices as shown by the following result.

\begin{prop} \label{prop:dft_sum}
    Let $\Lambda_{N, h} \sbse [0, 1)^2$ be a lattice and let $c: [0, 1) \ra \bbR$ be a potential. Assume that we have the discrete Fourier expansion
    \begin{align} \label{eq:dft_general}
        c\left(\frac kN\right) = \sum_{m=0}^{N-1} \hat c_N(m) \exp\left(2\pi\iu m\frac{k}{N}\right),
    \end{align}
    valid for all $k = 0, 1, \dots, N-1$. Then
    $$
    E_c(\Lambda_{N, h}) = N^2 \sum_{m=0}^{N-1} \hat c_N(m) \hat c_N(hm),
    $$
    where $\hat c_N$ is regarded as an $N$-periodic function.
\end{prop}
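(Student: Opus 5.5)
The plan is a direct computation: expand both factors of each pairwise interaction with \eqref{eq:dft_general}, then use orthogonality of characters on $\bbZ/N\bbZ$ to collapse the double sum over points.

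\textbf{Step 1: reduce to differences of indices.} The points of $\Lambda_{N,h}$ are $\bx_k = (k/N, \{hk/N\})$ for $k = 0, \dots, N-1$, so
$$
E_c(\Lambda_{N,h}) = \sum_{k,l=0}^{N-1} c\left(\frac{k-l}{N}\right) c\left(\left\{\frac{hk}{N}\right\} - \left\{\frac{hl}{N}\right\}\right).
$$
Since $c$ is $1$-periodic, the second argument may be replaced by $h(k-l)/N$ modulo $1$, and the first by $((k-l) \bmod N)/N$. Both arguments are then of the form $j/N$ with $j \in \{0,\dots,N-1\}$, where \eqref{eq:dft_general} applies.

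\textbf{Step 2: expand in characters.} The right-hand side of \eqref{eq:dft_general} is $N$-periodic in $k$, and $c(k/N)$ is also $N$-periodic in $k$ because $c$ is $1$-periodic. Hence the expansion holds for all integers $k$, which justifies using it at the reduced residues. Substituting gives
$$
E_c(\Lambda_{N,h}) = \sum_{m,m'=0}^{N-1} \hat c_N(m)\hat c_N(m') \sum_{k,l=0}^{N-1} \exp\left(2\pi\iu\,\frac{(m+hm')(k-l)}{N}\right).
$$

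\textbf{Step 3: apply orthogonality.} The inner double sum equals $N \cdot \sum_{j=0}^{N-1} \exp(2\pi\iu (m+hm') j/N)$, since each residue $j$ of $k-l$ occurs exactly $N$ times. This is $N^2$ if $m + hm' \equiv 0 \pmod N$ and $0$ otherwise. Therefore
$$
E_c(\Lambda_{N,h}) = N^2 \sum_{m'=0}^{N-1} \hat c_N(-hm')\,\hat c_N(m').
$$

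\textbf{Step 4: use symmetry and reindex.} Because $c(t) = c(1-t)$, the values $c(k/N)$ are invariant under $k \mapsto -k$, and uniqueness of the discrete Fourier coefficients gives $\hat c_N(-m) = \hat c_N(m)$. So $\hat c_N(-hm') = \hat c_N(hm')$, and renaming $m'$ as $m$ yields the claimed formula. The condition $\gcd(N,h)=1$ is not needed for this computation; it only guarantees that $\Lambda_{N,h}$ consists of $N$ distinct points.

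\textbf{Expected obstacle.} There is no genuinely hard step. The care is in Step 1, making sure the fractional parts and the periodic extension of $\hat c_N$ are handled consistently. An alternative is to group the pairs by $j = k-l \bmod N$, write $E_c = N\sum_j c(j/N)\,c(hj/N)$, and then apply Parseval. Either route works, and I would present the character-sum version.
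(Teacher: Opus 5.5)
Your proposal is correct and is essentially the paper's proof: expand both factors with the discrete Fourier expansion, use orthogonality to reduce the sum over $k,\ell$ to the condition $m+hm'\equiv 0 \pmod N$, and finish with $\hat c_N(-m)=\hat c_N(m)$ from the symmetry of $c$; your Steps 1, 2 and 4 just spell out the periodicity and uniqueness details the paper leaves implicit. One small correction to your side remark: the points of $\Lambda_{N,h}$ are distinct for every $h$ because their first coordinates $k/N$ differ, so $\gcd(N,h)=1$ is not about distinctness but about the second coordinates $\{hk/N\}$ also running through all of $\{0, 1/N, \dots, (N-1)/N\}$.
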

\begin{proof}
    Inserting \eqref{eq:dft_general} into \eqref{eq:tpe_general} one obtains
    \begin{align*}
        E_c(\Lambda_{N, h}) & = \sum_{k, \ell = 0}^{N-1} \left(\sum_{m=0}^{N-1} \hat c_N(m) \exp\left(2\pi\iu m\frac{k-\ell}{N}\right)\right) \\
        & ~~~~~~~~~~ \times \left(\sum_{n=0}^{N-1} \hat c_N(n) \exp\left(2\pi\iu nh\frac{k-\ell}{N}\right)\right) \\
        & = \sum_{m, n = 0}^{N-1} \hat c_N(m) \hat c_N(n) \sum_{k, \ell = 0}^{N-1} \exp\left(2\pi\iu (m+nh)\frac{k-\ell}{N}\right).
    \end{align*}
    The last double sum is zero unless $m+hn \equiv 0 \mod N$, in which case it equates to $N^2$, and thus
    $$
    E_c(\Lambda_{N, h}) = N^2 \sum_{n = 0}^{N-1} \hat c_N(-hn) \hat c_N(n) = N^2 \sum_{m=0}^{N-1} \hat c_N(m) \hat c_N(hm),
    $$
    since, by symmetry $c(t) = c(1-t)$, it holds $\hat c_N(m) = \hat c_N(-m)$.
\end{proof}

If the potential is given by $c(t) = K_{\sigma, p}(t)$, so that $E_c(X) = E_{\sigma, p}(X)$, the discrete Fourier representation takes the form \cite{CK95}
\begin{align} \label{eq:dft_sigma}
    \begin{split}
        & 1 + p \sum_{m \neq 0} \frac{\exp\left(2\pi \iu m \frac kN\right)}{|2\pi m|^{\sigma}} \\
    = ~ & 1 + p \frac{2 \zeta(\sigma)}{(2\pi N)^{\sigma}} + p \sum_{m=1}^{N-1} \frac{\zeta\left(\sigma, \frac mN\right) + \zeta\left(\sigma, 1-\frac mN\right)}{(2 \pi N)^{\sigma}} \exp\left(2\pi\iu m \frac kN\right)
    \end{split}
\end{align}
for $k=0, 1, \dots, N-1$. Here $\zeta(\sigma) = \zeta(\sigma, 1)$ denotes the \emph{Riemann zeta function} and more generally $\zeta(\sigma, a)$ the \emph{Hurwitz zeta function}
$$
\zeta(\sigma, a) = \sum_{n=0}^\infty \frac1{(n+a)^\sigma}.
$$
In the even integer case $\sigma=2s, s \in \bbN$ these expressions can be given more explicitly by \cite{Cvi09}
\begin{align} \label{eq:Bernoulli_dft}
    \begin{split}
        & 1 + p \sum_{m \neq 0} \frac{\exp\left(2\pi\iu m \frac kN\right)}{(2 \pi m)^{2s}} = 1 + p \frac{(-1)^{s-1}}{(2s)!} B_{2s}\left(\frac kN\right) \\
    = ~ & 1 + p \frac{(-1)^{s-1}B_{2s}}{(2s)!N^{2s}} + \frac{p}{(2s-1)! (2N)^{2s}}\sum_{m=1}^{N-1} \frac{g^{(2s-1)}\left(\frac mN\right)}{\pi^{2s-1}} \exp\left(2\pi\iu m \frac kN\right)
    \end{split}
\end{align}
with $g^{(2s-1)}(x)$ denoting the $(2s-1)$-th derivative of $g(x) \coloneqq - \cot(\pi x)$ and $B_{2s} = B_{2s}(0)$ being the \emph{Bernoulli numbers}. In terms of $e_{\sigma, p}(\Lambda_{N,h})$, equivalently the energy $E_{\sigma, p}(\Lambda_{N, h})$, by Proposition \ref{prop:dft_sum} we thus arrive at
\begin{align} \label{eq:wce_expression}
    \begin{split}
        e_{\sigma, p}(\Lambda_{N, h})
    = ~ & p \frac{4 \zeta(\sigma)}{(2\pi N)^{\sigma}} + p^2 \frac{4 \zeta(\sigma)^2}{(2\pi N)^{2\sigma}} \\
    & + \frac{p^2}{(2\pi N)^{2\sigma}} \sum_{m=1}^{N-1} \left(\zeta\left(\sigma, \frac mN\right) + \zeta\left(\sigma, 1-\frac mN\right)\right) \\
    & ~~~~~~~~~~~~~~~~~~~~~ \times \left(\zeta\left(\sigma, \left\{h\frac mN\right\}\right) + \zeta\left(\sigma, 1-\left\{h\frac mN\right\}\right)\right).
    \end{split}
\end{align}
Analyzing the last sum in the case of the Fibonacci lattice $\Phi_n = \Lambda_{F_n, F_{n-1}}$ will be the main objective of this paper.

\subsection{Main result and intuition}

Since $\zeta(\sigma, a) = a^{-\sigma} + \zeta(\sigma, a+1) = a^{-\sigma} + O(1)$ as $a \searrow 0$, we see that for $\sigma > 1$ there is a $1$-periodic and continuously differentiable function $f_\sigma: [0, 1) \ra \bbR$ such that
$$
\zeta(\sigma, a) + \zeta(\sigma, 1-a) = \frac{f_\sigma(a)}{\sin(\pi a)^\sigma}
$$
for all $0 < a < 1$ (with $f_\sigma(0) = f_\sigma(1) = \pi^\sigma$ and $f_\sigma'(0) = f_\sigma'(1) = 0$). 

For the statement below we will say that a $1$-periodic function $f: [0, 1) \rightarrow \bbR$ is \emph{$\alpha$-Hölder continuous} for some $0 < \alpha \leq 1$ if there is a constant $L_f \geq 0$ such that $|f(x)-f(y)| \leq L_f |x-y|^\alpha$ for all $x, y \in [0, 1)$. By the above, $f_\sigma$ is $1$-Hölder continuous as a continuously differentiable function, that is \emph{Lipschitz continuous}.

\begin{thm} \label{thm:asymp}
    Let $\sigma > 1$ and let $f: [0, 1) \ra \bbR$ be $1$-periodic, symmetric (in the sense of $f(t) = f(1-t)$) and $\alpha$-Hölder continuous for some $0 < \alpha \leq 1$. Then
    \begin{align} \label{eq:asymp_expansion}
        \frac1{F_n^\sigma} \sum_{m = 1}^{F_n-1} \frac{f(m/F_n)}{\left|{\sin(\pi m/F_n)}\right|^\sigma} \frac{f(F_{n-1}m/F_n)}{\left|{\sin(\pi F_{n-1}m/F_n)}\right|^\sigma} = Cn + D + O\left(\frac{n^2}{\phi^{\min\{\alpha, \sigma-1\} n/2}}\right)
    \end{align}
    as $n \ra \infty$ for some constants $C=C(\sigma, f), D=D(\sigma, f) \in \bbR$.
\end{thm}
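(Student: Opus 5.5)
We reparametrize the sum by the lattice dual to $\Phi_n$. For $m \in \{1, \dots, F_n-1\}$ write $m \equiv a$ and $F_{n-1}m \equiv b \pmod{F_n}$ with representatives $a, b \in (-F_n/2, F_n/2]$. The pairs $(a,b)$ that occur are exactly the nonzero points of the lattice $L_n = \{(a,b)\in\bbZ^2 : b \equiv F_{n-1}a \bmod F_n\}$ lying in the box $(-F_n/2,F_n/2]^2$, counted modulo $F_n\bbZ^2$. Since $F_{n-1}^2 \equiv (-1)^n \pmod{F_n}$ (Cassini), $L_n$ is the image of $\bbZ[\phi]$ under an explicit linear map. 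Concretely, $L_n$ has basis $(F_n,0), (1,F_{n-1})$, or equivalently a reduced basis built from consecutive Fibonacci numbers. Hence $L_n = A_n \cdot \Lambda$, where $\Lambda = \{(x, \bar x): x \in \bbZ[\phi]\}$ is the Minkowski embedding of $\bbZ[\phi]$ (up to signs), and $A_n$ is diagonal scaling by $\phi^{n}$ and $\phi^{-n}$ up to a normalization of size $F_n$.

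The key identity is therefore the following: a point $(a,b)$ with $|a|,|b|\le F_n/2$ corresponds to $x\in\bbZ[\phi]$ with $a/F_n \approx x\phi^{-n}/\sqrt5$ and $b/F_n\approx \pm\bar x \phi^{n}/\sqrt5$, up to sign conventions. I will check this carefully using $F_n\phi^{-n}\to 1/\sqrt5$ and the exact formulas $F_{n-1}-\phi^{-1}F_n = (-\phi)^{-n}\cdot$(const).

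Next I replace each factor by its leading behaviour:
\[
\frac{f(a/F_n)}{F_n^{\sigma/2}|\sin(\pi a/F_n)|^\sigma}\approx \frac{f(0)}{\pi^\sigma |a|^\sigma}F_n^{\sigma/2}\cdot(\text{corrections}).
\]
The product of the two factors, divided by $F_n^\sigma$, becomes a function $G(a/F_n,b/F_n)$. This function is homogeneous of degree $-\sigma$ in the product $|ab|/F_n^2 \approx |N(x)|/5$ near the axes and bounded elsewhere. Because $N(x)$ is invariant under multiplication by the unit $\phi$, the sum over $x\in\bbZ[\phi]\setminus\{0\}$ in the box splits into orbits of units. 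Each orbit $\{x\phi^k\}$ meets the box in roughly $n$ consecutive values of $k$, namely those with $|x\phi^{k-n}|\lesssim 1$ and $|\bar x\phi^{n-k}|\lesssim 1$. Its contribution is therefore a sum over $k$ of a function $\Psi(x\phi^{k-n})$. This sum equals $n$ times a principal term $|N(x)|^{-\sigma}\,\pi^{-2\sigma}f(0)^2$, plus boundary contributions from the two ends where one coordinate becomes of order $1$. Summing the principal parts over orbits gives $C n$ with $C$ proportional to $f(0)^2\sum_{(x)}|N(x)|^{-\sigma}$, i.e.\ to the Dedekind zeta value $\zeta_{\bbQ(\sqrt5)}(\sigma)$. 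The boundary contributions converge to $n$-independent sums, and these assemble into $D$.

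For the error analysis I split each orbit sum into the ``middle'' range, where both $|a|/F_n$ and $|b|/F_n$ are at most $\phi^{-\delta n}$ with $\delta = 1/2$, and two ``end'' ranges.

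In the middle range I expand $f(t)=f(0)+O(|t|^\alpha)$ and $\sin(\pi t)^{-\sigma}=(\pi t)^{-\sigma}(1+O(t^2))$. The Hölder error is $O(\phi^{-\alpha n/2})$ per term, summed over $n$ terms and then over orbits with weight $|N(x)|^{-\sigma}$.

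In the end ranges one coordinate is small. There the exact function of the large coordinate must be kept, and replacing the small coordinate by its limit costs the same Hölder error. The two ends are related by the symmetry $x\leftrightarrow\bar x$ and the symmetry $f(t)=f(1-t)$. I also have to justify that the finite-$n$ lattice $L_n$ differs from the exact scaled copy of $\Lambda$ only by a shear of size $O(\phi^{-2n})$. This introduces errors of comparable size.

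Finally, truncating the orbit sums at $|N(x)|\le \phi^{n}$ or similar, i.e.\ handling points near the box boundary where the correspondence with unit orbits is cut off, produces tails of size $\sum_{|N(x)|>T}|N(x)|^{-\sigma}\cdot n\approx nT^{1-\sigma}$. Choosing $T\approx\phi^{n/2}$ gives the $\phi^{-(\sigma-1)n/2}$ term. Multiplying the error estimates by the factors $n$ coming from the orbit lengths gives the $n^2$ prefactor.

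The main obstacle is making the orbit decomposition exact at finite $n$. Near the box edges the sine factors are not close to their linear approximations, so the contributions there must be organized so that they converge, with geometric error, to an $n$-independent constant $D$. My first attempt will be to define $D$ directly as a limit via a telescoping comparison between the sums for $n$ and $n+1$. The multiplication-by-$\phi$ map sends $L_{n}$ to $L_{n+1}$ essentially bijectively (using $(a,b)\mapsto(a+b,a)$-type Fibonacci recursions). The difference $S_{n+1}-S_n$ would then be shown to equal $C+O(n\phi^{-\min\{\alpha,\sigma-1\}n/2})$, and summing these differences yields the claimed expansion.
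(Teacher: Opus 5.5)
\textbf{Genuine gap: the exact per-orbit count.} Your outline has the same architecture as the paper. Your unit orbits $\{x\phi^k\}$ are exactly the rows of the Wythoff array (Proposition~\ref{prop:Dedekind_eta}). The bulk of each orbit gives $C\propto\zeta_{\bbQ(\sqrt5)}(\sigma)$, and the two ends, where one factor is kept exactly and the other is replaced by its $n$-independent limit, give $D$.

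The gap is the step you call the main obstacle, and it is not a technicality. The end terms of an orbit are not small. The last in-range element has $|a|/F_n\asymp 1$ and $|b|\asymp|N(x)|$, so its term is $\asymp|N(x)|^{-\sigma}$, the same size as a bulk term. Hence a constant $D$ exists only if, for every orbit, the in-range indices form an interval of length exactly $n-c(x)$ with $c(x)$ independent of $n$ (for all $n$, or with quantitatively controlled exceptions summed over orbits). If a single orbit's count is off by one for infinitely many $n$, the expansion $Cn+D+o(1)$ fails. ``Roughly $n$ consecutive values of $k$'' does not give this, and the delicate cases are precisely lattice points on or near the boundary of the box.

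The paper proves this in Theorem~\ref{thm:const_shift}: $W_{i,k}<F_n/2$ iff $n-k>\mu_i$ for all $n$. So row $i$ contributes exactly $n-\mu_i-1$ terms, and $-(\mu_i+1)$ enters \eqref{D}. The proof rests on the floor-function identities of Lemma~\ref{lem:floor}, on Lemma~\ref{lem:half_fib} (the equality case $W_{i,k}=F_n/2$), and on Lemma~\ref{lem:no_sol} ($\phi^m\neq 2w_+(i)$, so the limiting boundary case never occurs). Lemma~\ref{lem:dual_bounds} then controls the partner coordinate $W^*_{i,n-k}$.

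Your telescoping fallback does not bypass any of this. Getting $S_{n+1}-S_n=C+o(1)$ requires exactly that every orbit gains one in-range element and that the end terms at levels $n$ and $n+1$ match. Also, multiplication by $\phi$ cannot carry the box points at level $n$ essentially bijectively onto those at level $n+1$: there are $F_n-1$ of the former and $F_{n+1}-1$ of the latter.

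\textbf{Error bookkeeping.} Near the end where $a$ is small, $|a|/F_n\asymp|N(x)|\phi^{k-n}$ (for the representative with $|\bar x|\asymp 1$). So the H\"older error per term is of order $\phi^{-\alpha(n-k)}|N(x)|^{-(\sigma-\alpha)}$, as in Lemma~\ref{lem:approx_quality}, not $\phi^{-\alpha n/2}|N(x)|^{-\sigma}$. These weights are not summable over orbits when $\alpha\ge\sigma-1$. This is what forces the exponent $\min\{\alpha,\sigma-1\}$ in the paper, which passes to $\beta=\min\{\alpha,\sigma-1\}$ and uses $\gamma=\beta/2$ to make $\sum_i\delta^*_{f,\sigma}(i)$ converge. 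Your truncation at $T\approx\phi^{n/2}$ can recover this exponent only after the weights are corrected.

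Your three-range split is also degenerate. Along an orbit $|ab|\approx|N(x)|F_n/\sqrt5$, so both $|a|/F_n$ and $|b|/F_n$ lie below $\phi^{-n/2}$ only when $|N(x)|=O(1)$. The ``middle range'' is therefore essentially empty, and the split should instead be in the orbit index at $k\approx n/2$, as in the paper.

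\textbf{Minor points.} The vectors $(F_n,0),(1,F_{n-1})$ span only an index-$F_{n-1}$ sublattice of $L_n$; use $(0,F_n),(1,F_{n-1})$. The exact correspondence is $\sqrt5\,a=y-\bar y$ and $\sqrt5\,b=\pm\bigl(y\phi^{-n}-(-1)^n\bar y\phi^{n}\bigr)$ with $y=x\phi^k$. So the shear has relative size $\phi^{-n}$, not $\phi^{-2n}$. At the small end, $a$ must therefore be kept as the exact integer (the paper's $W_{i,k}$) rather than replaced by $y/\sqrt5$.
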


The constants $C$ and $D$ will depend only on $\sigma$ and $f$ and can be determined to arbitrary precision via certain infinite series, see \eqref{C} and \eqref{D} below, where $C$ can be given explicitly via values of a certain Dedekind zeta function, see \eqref{eq:C_exact} below. While the value for $C$ was essentially already determined in \cite{Bor25}, to the knowledge of the authors no investigation into lower-order terms has been carried out before. Our main takeaway here is the following:
\begin{center}
    \textit{If Fibonacci lattices turn out to be minimizers for the quantities $e_{\sigma, p}$ (or even general energies $E_c$), then Theorem \ref{thm:asymp} gives a precise control of their asymptotics via \eqref{eq:wce_expression}.}
\end{center}

Since we want to give a for the most part self-contained account (only using certain number theoretic formulas in the last two parts of this paper for calculating specific values) and do not assume deeper prerequisites in number theory, in particular avoiding class field theory as is frequently employed in this context \cite{Bec14, Bor17, Bor25}, we aim to give some intuition for Theorem \ref{thm:asymp} demonstrated on the example of
\begin{align} \label{sum_demo}
    \Sigma \coloneqq \sum_{m=1}^{F_n-1} \frac1{F_n^2 \sin(\pi m/{F_n})^2 \sin(\pi F_{n-1} m/{F_n})^2}.
\end{align}
This is, up to a simple factor, the final sum in \eqref{eq:wce_expression} with $\sigma = 2s = 2$. Consider the plot in Figure \ref{fig:sum_terms}, which shows the terms of this sum in the case of $n=15$. We see by symmetry that the $m$-th term is equal to the $(F_n-m)$-th one and we can thus approximate
\begin{align} \label{approx_half}
    \Sigma \approx 2 \sum_{1 \leq m < \frac{F_n}2} \frac1{F_n^2 \sin(\pi m/{F_n})^2 \sin(\pi F_{n-1} m/{F_n})^2}.
\end{align}
This possibly removes the term corresponding to $m = F_n/2$, but this will be absorbed into the error term of \eqref{eq:asymp_expansion}. Next, observe that we get a clustering behavior of certain terms around certain values, some of them indicated as dashed lines in Figure \ref{fig:sum_terms}. The terms which cluster around the top-most dashed line correspond to the indices
$$
m \in \{1, 2, 3, 5, 8, 13, 21, 34, \dots\},
$$
the Fibonacci numbers. The terms clustering around the second, third, forth and fifth dashed line from the top respectively correspond to the indices
\begin{align} \label{levels_partition}
    \begin{split}
        m & \in \{6, 10, 16, 26, 42, 68, \dots\}, \\
    m & \in \{4, 7, 11, 18, 29, 47, \dots\}, \\
    m & \in \{9, 15, 24, 39, 63, 102, \dots\}, \\
    m & \in \{14, 19, 23, 31, 37, 50, 60, 81, 97, 131, 157, 212, \dots\}.
    \end{split}
\end{align}
Observe that these sets are either sequences that fulfill the Fibonacci recurrence $U_k = U_{k-1}+U_{k-2}$ or, in the last case, the union of such sequences (namely $\{14, 23, 37, 60, \dots\} \cup \{19, 31, 50, 81, \dots\}$). The strategy is thus to identify these Fibonacci-like sequences to partition the index set $\{m \in \bbN: m < F_n/2\}$ of \eqref{approx_half} into classes of size $\approx n$. In each class, the terms will cluster around certain values $y_1, y_2, \dots$ represented by the dashed lines in Figure \ref{fig:sum_terms}. This is where the main term $Cn$ in \eqref{eq:asymp_expansion} will come from and where we get an expression of the form $C=\sum_i y_i$. For the lower-order term $D+O(n^2 \phi^{-\min\{\alpha, \sigma-1\}n/2})$ we will need to take track of the error that we make in approximating each term from such a class by the corresponding constant $y_i$. This will be the main technicality of the proof.

\begin{figure}[t]
		\begin{tikzpicture}[baseline, scale=0.967]
		\begin{axis}[
			font=\footnotesize,
			enlarge x limits=true,
			enlarge y limits=true,
			height=0.54\textwidth,
			grid=major,
			width=\textwidth,
            ymode=log,
			xlabel={$m$},
			ylabel={$ \frac1{F_{n}^2\sin(\pi m/{F_{n}})^2 \sin(\pi F_{n-1} m/{F_{n}})^2}$},
		]
\addplot[black,only marks,mark=*,mark size=1pt,mark options={solid}] coordinates {
(1, 0.1166381915298848) (2, 0.055516816688968226) (3, 0.05750462493187023) (4, 0.006382267977964846) (5, 0.05189640335991287) (6, 0.004470095099502276) (7, 0.0028321333291675605) (8, 0.05219289991631015) (9, 0.001300970234769216) (10, 0.0035206855093075787) (11, 0.0023923146488011564) (12, 0.0007554458093333213) (13, 0.05153210598967) (14, 0.0006574864550795501) (15, 0.0007997410780821538) (16, 0.0033698747249454185) (17, 0.00035163873162537084) (18, 0.0021548904693958555) (19, 0.0005381941632061441) (20, 0.00030988874390652674) (21, 0.05153210598966985) (22, 0.000230911380516451) (23, 0.0004931939918210303) (24, 0.0007030286079447022) (25, 0.0001669668488931378) (26, 0.0032733771378384334) (27, 0.00020181916810961103) (28, 0.00019448598773900202) (29, 0.002112509848338691) (30, 0.00011538139623344728) (31, 0.0004633616486501671) (32, 0.0002403459861350274) (33, 0.00010493508243266495) (34, 0.05219289991631026) (35, 9.958216622791855e-05) (36, 0.00015898265063136622) (37, 0.0004563416145290383) (38, 7.122265395456422e-05) (39, 0.0006643421628518661) (40, 0.00010892946532139962) (41, 7.984921346212387e-05) (42, 0.0032733771378384313) (43, 5.89496060141819e-05) (44, 0.00016329620833960946) (45, 0.0001634620711499372) (46, 5.126255747304683e-05) (47, 0.002112509848338694) (48, 5.960825578249748e-05) (49, 7.127522315729581e-05) (50, 0.0004480285899131873) (51, 4.001566090109401e-05) (52, 0.0002183568842200886) (53, 7.67244424572706e-05) (54, 4.190878439749013e-05) (55, 0.051896403359913114) (56, 3.7511905105994555e-05) (57, 7.45229930106138e-05) (58, 0.00014325215364601685) (59, 3.0408225388455974e-05) (60, 0.00044802858991318785) (61, 4.2999193765116913e-05) (62, 3.855982504818701e-05) (63, 0.000664342162851866) (64, 2.6407701971629794e-05) (65, 9.394909399418996e-05) (66, 6.444526375046289e-05) (67, 2.5555995754046433e-05) (68, 0.0033698747249454155) (69, 2.7466168777001494e-05) (70, 4.0100029426950635e-05) (71, 0.00015591944394837987) (72, 2.0466524100896935e-05) (73, 0.00015591944394837998) (74, 3.5392695658717015e-05) (75, 2.3666641332799526e-05) (76, 0.0021548904693958594) (77, 1.9482179048994016e-05) (78, 4.813456866141823e-05) (79, 6.25966457362305e-05) (80, 1.7315037427182724e-05) (81, 0.00045634161452903907) (82, 2.2323546942920678e-05) (83, 2.42557536429208e-05) (84, 0.00021835688422008786) (85, 1.511471222555366e-05) (86, 7.027823783912405e-05) (87, 3.247657732164904e-05) (88, 1.594828925300342e-05) (89, 0.05750462493187024) (90, 1.5668132877237228e-05) (91, 2.7925086004388582e-05) (92, 7.027823783912401e-05) (93, 1.271925924371732e-05) (94, 0.00014325215364601701) (95, 1.9788949821092848e-05) (96, 1.6041595155675916e-05) (97, 0.0004633616486501651) (98, 1.20213397089835e-05) (99, 3.7341643864017116e-05) (100, 3.299548226166961e-05) (101, 1.1571814161403556e-05) (102, 0.0007030286079447032) (103, 1.3550085498378518e-05) (104, 1.7804260229864835e-05) (105, 9.394909399418995e-05) (106, 9.982364597296383e-06) (107, 6.259664573623055e-05) (108, 1.898813372682245e-05) (109, 1.1398034753334777e-05) (110, 0.003520685509307552) (111, 1.0184595530194175e-05) (112, 2.22946991250961e-05) (113, 3.73416438640171e-05) (114, 8.929002917054287e-06) (115, 0.0001634620711499372) (116, 1.2512666412528735e-05) (117, 1.2239133256692484e-05) (118, 0.000163296208339609) (119, 8.293562404690776e-06) (120, 3.299548226166962e-05) (121, 1.9713629190169837e-05) (122, 8.599838753023382e-06) (123, 0.0023923146488011577) (124, 9.133106633953356e-06) (125, 1.454511083952518e-05) (126, 4.813456866141825e-05) (127, 7.2668612214057666e-06) (128, 6.444526375046293e-05) (129, 1.2307537486441142e-05) (130, 8.955670916177635e-06) (131, 0.0004931939918210289) (132, 7.252280562098308e-06) (133, 1.9713629190169843e-05) (134, 2.2294699125096062e-05) (135, 6.834997740830872e-06) (136, 0.0002403459861350275) (137, 8.641948248639075e-06) (138, 1.0187130640639817e-05) (139, 7.45229930106139e-05) (140, 6.20364159038743e-06) (141, 3.247657732164907e-05) (142, 1.2914576874268852e-05) (143, 6.913311931102109e-06) (144, 0.05551681668896771) (145, 6.6521543900695325e-06) (146, 1.2914576874268852e-05) (147, 2.792508600438855e-05) (148, 5.690356579889444e-06) (149, 7.672444245727062e-05) (150, 8.623698462482308e-06) (151, 7.569240965484014e-06) (152, 0.00015898265063136603) (153, 5.535472153835606e-06) (154, 1.8988133726822444e-05) (155, 1.454511083952518e-05) (156, 5.594483528404123e-06) (157, 0.0005381941632061444) (158, 6.390769356244541e-06) (159, 9.094076370029278e-06) (160, 4.0100029426950594e-05) (161, 4.943229706107123e-06) (162, 3.5392695658717015e-05) (163, 9.094076370029278e-06) (164, 5.91742638857402e-06) (165, 0.0007997410780821532) (166, 5.1519660047071555e-06) (167, 1.2307537486441143e-05) (168, 1.7804260229864842e-05) (169, 4.7244803719402865e-06) (170, 0.00010892946532139964) (171, 6.431948768458625e-06) (172, 6.795467039500667e-06) (173, 7.127522315729575e-05) (174, 4.469333929921144e-06) (175, 1.9788949821092858e-05) (176, 1.0187130640639817e-05) (177, 4.838869437110746e-06) (178, 0.0044700950995022765) (179, 4.997619963018818e-06) (180, 8.623698462482311e-06) (181, 2.4255753642920776e-05) (182, 4.150375118245071e-06) (183, 4.2999193765116913e-05) (184, 6.795467039500665e-06) (185, 5.341603837795088e-06) (186, 0.00019448598773900207) (187, 4.1995070601139335e-06) (188, 1.2512666412528744e-05) (189, 1.223913325669249e-05) (190, 4.121549890971031e-06) (191, 0.00020181916810961103) (192, 5.0550944530992725e-06) (193, 6.431948768458623e-06) (194, 3.8559825048187004e-05) (195, 3.7847646742767677e-06) (196, 2.2323546942920678e-05) (197, 7.569240965484009e-06) (198, 4.3899785144282735e-06) (199, 0.002832133329167581) (200, 4.098886031249636e-06) (201, 8.641948248639077e-06) (202, 1.604159515567591e-05) (203, 3.6458074556673797e-06) (204, 5.960825578249745e-05) (205, 5.341603837795088e-06) (206, 5.0550944530992725e-06) (207, 7.984921346212355e-05) (208, 3.578287917496178e-06) (209, 1.3550085498378514e-05) (210, 8.955670916177628e-06) (211, 3.7558567455823606e-06) (212, 0.00065748645507955) (213, 4.157753707838223e-06) (214, 6.39076935624454e-06) (215, 2.366664133279951e-05) (216, 3.3425221981662044e-06) (217, 2.7466168777001494e-05) (218, 5.91742638857402e-06) (219, 4.157753707838223e-06) (220, 0.00030988874390652516) (221, 3.506043961942976e-06) (222, 9.133106633953356e-06) (223, 1.1398034753334772e-05) (224, 3.334908169770133e-06) (225, 9.958216622791854e-05) (226, 4.3899785144282735e-06) (227, 4.99761996301882e-06) (228, 4.1908784397490143e-05) (229, 3.17237738849278e-06) (230, 1.5668132877237228e-05) (231, 6.913311931102106e-06) (232, 3.561559606350522e-06) (233, 0.1166381915298848) (234, 3.5615596063505193e-06) (235, 6.6521543900695325e-06) (236, 1.5948289253003416e-05) (237, 3.066785703333689e-06) (238, 3.7511905105994555e-05) (239, 4.838869437110747e-06) (240, 4.098886031249636e-06) (241, 0.00010493508243266459) (242, 3.115484400068336e-06) (243, 1.0184595530194177e-05) (244, 8.599838753023377e-06) (245, 3.1667156228086236e-06) (246, 0.000230911380516451) (247, 3.7558567455823606e-06) (248, 5.151966004707158e-06) (249, 2.5555995754046453e-05) (250, 2.9170930661775636e-06) (251, 1.9482179048994016e-05) (252, 5.59448352840412e-06) (253, 3.506043961942976e-06) (254, 0.0007554458093333183) (255, 3.166715622808623e-06) (256, 7.252280562098308e-06) (257, 1.1571814161403534e-05) (258, 2.9158335601320273e-06) (259, 5.89496060141819e-05) (260, 4.121549890971029e-06) (261, 4.199507060113934e-06) (262, 5.126255747304693e-05) (263, 2.8683434811079863e-06) (264, 1.20213397089835e-05) (265, 6.8349977408308695e-06) (266, 3.115484400068335e-06) (267, 0.0013009702347692163) (268, 3.3349081697701337e-06) (269, 5.535472153835606e-06) (270, 1.7315037427182697e-05) (271, 2.7762760163632173e-06) (272, 2.6407701971629794e-05) (273, 4.724480371940282e-06) (274, 3.578287917496178e-06) (275, 0.0001669668488931375) (276, 2.9158335601320273e-06) (277, 8.293562404690777e-06) (278, 8.929002917054289e-06) (279, 2.8683434811079863e-06) (280, 0.00011538139623344728) (281, 3.645807455667381e-06) (282, 4.469333929921144e-06) (283, 3.040822538845594e-05) (284, 2.73135698829063e-06) (285, 1.5114712225553656e-05) (286, 5.690356579889445e-06) (287, 3.17237738849278e-06) (288, 0.0063822679779645865) (289, 3.0667857033336877e-06) (290, 6.203641590387429e-06) (291, 1.2719259243717306e-05) (292, 2.73135698829063e-06) (293, 4.001566090109401e-05) (294, 4.15037511824507e-06) (295, 3.7847646742767677e-06) (296, 7.122265395456417e-05) (297, 2.776276016363217e-06) (298, 9.982364597296383e-06) (299, 7.2668612214057666e-06) (300, 2.9170930661775644e-06) (301, 0.00035163873162537084) (302, 3.3425221981662036e-06) (303, 4.943229706107123e-06) (304, 2.046652410089692e-05) (305, 2.6874496103198067e-06) (306, 2.0466524100896932e-05) (307, 4.943229706107119e-06) (308, 3.342522198166205e-06) (309, 0.0003516387316253713) (310, 2.9170930661775644e-06) (311, 7.2668612214057666e-06) (312, 9.982364597296388e-06) (313, 2.776276016363217e-06) (314, 7.122265395456424e-05) (315, 3.7847646742767677e-06) (316, 4.150375118245071e-06) (317, 4.001566090109399e-05) (318, 2.73135698829063e-06) (319, 1.2719259243717322e-05) (320, 6.203641590387426e-06) (321, 3.066785703333689e-06) (322, 0.006382267977964846) (323, 3.1723773884927797e-06) (324, 5.690356579889443e-06) (325, 1.5114712225553646e-05) (326, 2.73135698829063e-06) (327, 3.0408225388455974e-05) (328, 4.469333929921144e-06) (329, 3.6458074556673797e-06) (330, 0.00011538139623344675) (331, 2.8683434811079863e-06) (332, 8.929002917054285e-06) (333, 8.29356240469077e-06) (334, 2.9158335601320273e-06) (335, 0.0001669668488931378) (336, 3.578287917496178e-06) (337, 4.7244803719402865e-06) (338, 2.6407701971629794e-05) (339, 2.7762760163632173e-06) (340, 1.731503742718272e-05) (341, 5.535472153835607e-06) (342, 3.3349081697701337e-06) (343, 0.0013009702347692028) (344, 3.115484400068335e-06) (345, 6.834997740830871e-06) (346, 1.2021339708983495e-05) (347, 2.8683434811079863e-06) (348, 5.126255747304683e-05) (349, 4.199507060113934e-06) (350, 4.121549890971029e-06) (351, 5.8949606014181695e-05) (352, 2.915833560132027e-06) (353, 1.1571814161403556e-05) (354, 7.252280562098301e-06) (355, 3.1667156228086236e-06) (356, 0.0007554458093333212) (357, 3.506043961942976e-06) (358, 5.594483528404125e-06) (359, 1.948217904899402e-05) (360, 2.9170930661775636e-06) (361, 2.5555995754046426e-05) (362, 5.151966004707157e-06) (363, 3.7558567455823606e-06) (364, 0.00023091138051645012) (365, 3.166715622808623e-06) (366, 8.59983875302338e-06) (367, 1.0184595530194173e-05) (368, 3.115484400068336e-06) (369, 0.00010493508243266495) (370, 4.098886031249633e-06) (371, 4.838869437110746e-06) (372, 3.751190510599458e-05) (373, 3.066785703333688e-06) (374, 1.5948289253003416e-05) (375, 6.652154390069528e-06) (376, 3.561559606350522e-06) (377, 0.11663819152988092) (378, 3.5615596063505193e-06) (379, 6.913311931102109e-06) (380, 1.5668132877237204e-05) (381, 3.1723773884927792e-06) (382, 4.190878439749013e-05) (383, 4.997619963018815e-06) (384, 4.389978514428275e-06) (385, 9.958216622791838e-05) (386, 3.334908169770133e-06) (387, 1.1398034753334777e-05) (388, 9.133106633953353e-06) (389, 3.506043961942976e-06) (390, 0.0003098887439065267) (391, 4.157753707838223e-06) (392, 5.917426388574019e-06) (393, 2.746616877700145e-05) (394, 3.3425221981662036e-06) (395, 2.366664133279953e-05) (396, 6.390769356244538e-06) (397, 4.157753707838223e-06) (398, 0.000657486455079549) (399, 3.7558567455823606e-06) (400, 8.955670916177628e-06) (401, 1.3550085498378504e-05) (402, 3.578287917496178e-06) (403, 7.984921346212386e-05) (404, 5.0550944530992725e-06) (405, 5.341603837795086e-06) (406, 5.960825578249742e-05) (407, 3.645807455667381e-06) (408, 1.6041595155675916e-05) (409, 8.641948248639075e-06) (410, 4.098886031249633e-06) (411, 0.0028321333291675592) (412, 4.389978514428275e-06) (413, 7.569240965484012e-06) (414, 2.2323546942920648e-05) (415, 3.7847646742767677e-06) (416, 3.8559825048187e-05) (417, 6.431948768458622e-06) (418, 5.0550944530992725e-06) (419, 0.00020181916810961092) (420, 4.121549890971028e-06) (421, 1.2239133256692477e-05) (422, 1.2512666412528735e-05) (423, 4.1995070601139335e-06) (424, 0.00019448598773900202) (425, 5.341603837795086e-06) (426, 6.795467039500667e-06) (427, 4.299919376511688e-05) (428, 4.15037511824507e-06) (429, 2.425575364292079e-05) (430, 8.6236984624823e-06) (431, 4.997619963018817e-06) (432, 0.004470095099502304) (433, 4.838869437110747e-06) (434, 1.0187130640639817e-05) (435, 1.9788949821092865e-05) (436, 4.469333929921144e-06) (437, 7.127522315729581e-05) (438, 6.795467039500665e-06) (439, 6.431948768458621e-06) (440, 0.00010892946532139964) (441, 4.724480371940283e-06) (442, 1.7804260229864832e-05) (443, 1.2307537486441132e-05) (444, 5.151966004707159e-06) (445, 0.000799741078082154) (446, 5.917426388574019e-06) (447, 9.09407637002928e-06) (448, 3.5392695658717e-05) (449, 4.943229706107119e-06) (450, 4.010002942695064e-05) (451, 9.094076370029281e-06) (452, 6.390769356244538e-06) (453, 0.0005381941632061455) (454, 5.594483528404122e-06) (455, 1.4545110839525184e-05) (456, 1.8988133726822434e-05) (457, 5.535472153835608e-06) (458, 0.00015898265063136612) (459, 7.569240965484009e-06) (460, 8.623698462482305e-06) (461, 7.672444245727039e-05) (462, 5.690356579889443e-06) (463, 2.7925086004388572e-05) (464, 1.2914576874268842e-05) (465, 6.652154390069528e-06) (466, 0.05551681668896824) (467, 6.913311931102106e-06) (468, 1.291457687426884e-05) (469, 3.247657732164904e-05) (470, 6.203641590387426e-06) (471, 7.452299301061372e-05) (472, 1.0187130640639817e-05) (473, 8.641948248639077e-06) (474, 0.00024034598613502663) (475, 6.834997740830868e-06) (476, 2.2294699125096096e-05) (477, 1.971362919016983e-05) (478, 7.252280562098303e-06) (479, 0.0004931939918210299) (480, 8.95567091617762e-06) (481, 1.2307537486441137e-05) (482, 6.44452637504629e-05) (483, 7.2668612214057666e-06) (484, 4.813456866141824e-05) (485, 1.4545110839525184e-05) (486, 9.133106633953355e-06) (487, 0.0023923146488011426) (488, 8.599838753023377e-06) (489, 1.9713629190169833e-05) (490, 3.2995482261669596e-05) (491, 8.293562404690769e-06) (492, 0.00016329620833960932) (493, 1.2239133256692484e-05) (494, 1.2512666412528742e-05) (495, 0.0001634620711499375) (496, 8.92900291705429e-06) (497, 3.7341643864017116e-05) (498, 2.2294699125096056e-05) (499, 1.0184595530194177e-05) (500, 0.0035206855093075774) (501, 1.1398034753334772e-05) (502, 1.8988133726822427e-05) (503, 6.259664573623039e-05) (504, 9.982364597296388e-06) (505, 9.394909399418999e-05) (506, 1.7804260229864842e-05) (507, 1.3550085498378504e-05) (508, 0.000703028607944701) (509, 1.1571814161403534e-05) (510, 3.299548226166961e-05) (511, 3.734164386401711e-05) (512, 1.2021339708983495e-05) (513, 0.00046336164865016686) (514, 1.6041595155675913e-05) (515, 1.9788949821092875e-05) (516, 0.00014325215364601664) (517, 1.2719259243717306e-05) (518, 7.027823783912396e-05) (519, 2.792508600438854e-05) (520, 1.5668132877237204e-05) (521, 0.05750462493186686) (522, 1.594828925300342e-05) (523, 3.247657732164906e-05) (524, 7.027823783912393e-05) (525, 1.5114712225553646e-05) (526, 0.00021835688422008846) (527, 2.4255753642920766e-05) (528, 2.232354694292065e-05) (529, 0.0004563416145290378) (530, 1.73150374271827e-05) (531, 6.259664573623045e-05) (532, 4.813456866141826e-05) (533, 1.948217904899402e-05) (534, 0.002154890469395855) (535, 2.3666641332799516e-05) (536, 3.5392695658717e-05) (537, 0.00015591944394837963) (538, 2.046652410089692e-05) (539, 0.00015591944394837974) (540, 4.010002942695059e-05) (541, 2.746616877700145e-05) (542, 0.00336987472494542) (543, 2.555599575404645e-05) (544, 6.444526375046293e-05) (545, 9.394909399418999e-05) (546, 2.6407701971629794e-05) (547, 0.0006643421628518643) (548, 3.855982504818699e-05) (549, 4.299919376511688e-05) (550, 0.00044802858991318704) (551, 3.040822538845594e-05) (552, 0.0001432521536460168) (553, 7.452299301061382e-05) (554, 3.751190510599458e-05) (555, 0.05189640335991292) (556, 4.190878439749014e-05) (557, 7.672444245727039e-05) (558, 0.00021835688422008773) (559, 4.0015660901093986e-05) (560, 0.00044802858991318747) (561, 7.127522315729574e-05) (562, 5.9608255782497405e-05) (563, 0.0021125098483386912) (564, 5.126255747304694e-05) (565, 0.00016346207114993746) (566, 0.0001632962083396089) (567, 5.8949606014181695e-05) (568, 0.0032733771378384226) (569, 7.984921346212353e-05) (570, 0.00010892946532139968) (571, 0.0006643421628518642) (572, 7.122265395456417e-05) (573, 0.00045634161452903847) (574, 0.00015898265063136587) (575, 9.958216622791836e-05) (576, 0.05219289991631057) (577, 0.0001049350824326646) (578, 0.0002403459861350267) (579, 0.0004633616486501649) (580, 0.00011538139623344675) (581, 0.0021125098483386943) (582, 0.00019448598773900207) (583, 0.00020181916810961095) (584, 0.0032733771378384204) (585, 0.0001669668488931375) (586, 0.0007030286079447019) (587, 0.0004931939918210285) (588, 0.00023091138051645015) (589, 0.05153210598966978) (590, 0.00030988874390652506) (591, 0.0005381941632061457) (592, 0.0021548904693958586) (593, 0.0003516387316253713) (594, 0.0033698747249454168) (595, 0.0007997410780821533) (596, 0.0006574864550795489) (597, 0.051532105989669626) (598, 0.000755445809333318) (599, 0.002392314648801144) (600, 0.0035206855093075505) (601, 0.001300970234769203) (602, 0.052192899916310695) (603, 0.0028321333291675796) (604, 0.004470095099502306) (605, 0.05189640335991316) (606, 0.006382267977964585) (607, 0.057504624931866845) (608, 0.05551681668896773) (609, 0.11663819152988092) 
};

\addplot [forget plot,gray,domain=0:611, samples=2, dashed]{0.05132991127342169};
\addplot [forget plot,gray,domain=0:611, samples=2, dashed]{0.0032081194545888554};
\addplot [forget plot,gray,domain=0:611, samples=2, dashed]{0.0020531964509368675};
\addplot [forget plot,gray,domain=0:611, samples=2, dashed]{0.000633702608313848};
\addplot [forget plot,gray,domain=0:611, samples=2, dashed]{0.00042421414275555116};

		\end{axis}
		\end{tikzpicture}
\caption{Terms of \eqref{sum_demo} for $n=15, F_n=610, F_{n-1}=377$. The dashed lines are at $y \in \left\{\frac5{\pi^4}, \frac5{16\pi^4}, \frac5{25 \pi^4}, \frac5{81 \pi^4}, \frac5{121 \pi^4}\right\}$.}
\label{fig:sum_terms}
\end{figure}

\section{The Wythoff array} \label{sec:wythoff}

As seen above, we will need to partition the index set $\{m \in \bbN: m < F_n/2\}$ in \eqref{approx_half} into parts on which the terms are controllable. This partition will be introduced in this section.

\subsection{Basic definitions and properties}

Let
$$
\phi = \frac{1+\sqrt5}{2}
$$
denote the golden ration. The \emph{Wythoff array} $(W_{i, k})_{k \in \bbN}, i \in \bbN$ \cite{Mor80} is given by
$$
W_{i, 1} = \lfloor \phi \lfloor \phi i\rfloor\rfloor \quad \text{and} \quad W_{i, 2} = \lfloor \phi^2 \lfloor \phi i\rfloor\rfloor
$$
for $k=1, 2$ and via the recurrence
\begin{align} \label{eq:recurrence}
    W_{i, k} = W_{i, k-1} + W_{i, k-2}
\end{align}
for $k \geq 3$. Up to a shift in the index, the first sequence $(W_{1, k})_k$ is given by the Fibonacci numbers $W_{1, k} = F_{k+1}$. Some basic and classic results will be summarized in the following statement.

\begin{thm} \label{thm:wythoff_basics}
    Let $(W_{i, k})_{k \in \bbN}, i \in \bbN$ be the Wythoff array.
    \begin{itemize}
        \item [(i)] For $i, k \in\bbN$ it holds
        $$
        W_{i, k} = \frac1{\sqrt5} \left(w_+(i) \phi^k - w_-(i) \left(-\frac1\phi\right)^k \right)
        $$
        with $w_+(i) = \phi^{-2} W_{i, 1} + \phi^{-1} W_{i, 2}$ and $w_-(i) = \phi^2 W_{i, 1} - \phi W_{i, 2}$.

        \item [(ii)] For $i \in \bbN$ set $\eta_i \coloneqq -w_+(i) w_-(i) \in \bbN$. Then for all $k \geq 2$ it holds
        $$
        \eta_i = (-1)^k \left(W_{i, k}^2 - W_{i, k} W_{i, k-1} - W_{i, k-1}^2\right).
        $$

        \item[(iii)] For $i, k \in \bbN$ it holds
        $$
        W_{i, k+1} = \lfloor \phi W_{i, k} + \phi^{-1}\rfloor.
        $$

        \item [(iv)] For every $N \in \bbN$ there is a unique tuple $(i, k) \in \bbN^2$ such that $N = W_{i, k}$.

        \item [(v)] If $(U_n)_{n \in \bbN}$ is a sequence with $U_1, U_2 \in \bbN$ fulfilling the Fibonacci recurrence $U_n = U_{n-1}+U_{n-2}$, then there is a unique $i \in \bbN$ for which there are $\nu \in \bbN, \kappa \in \bbZ$ such that $U_n = W_{i, n+\kappa}$ for all $n \geq \nu$.
    \end{itemize}
\end{thm}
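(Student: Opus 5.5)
We prove the five items in order, with (i)–(iii) as explicit computations and (iv)–(v) via the Zeckendorf representation.

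\emph{(i).} Every sequence satisfying the Fibonacci recurrence \eqref{eq:recurrence} is a linear combination $a\phi^k + b(-1/\phi)^k$, because $\phi$ and $-1/\phi$ are the roots of $x^2=x+1$. Solving the $2\times 2$ linear system given by $k=1,2$ yields exactly the stated $w_\pm(i)$. This is a routine verification using $\phi^2=\phi+1$ and $\phi+\phi^{-1}=\sqrt5$.

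\emph{(ii).} The quadratic form factors over $\bbQ(\sqrt5)$ as
$$
W_{i,k}^2-W_{i,k}W_{i,k-1}-W_{i,k-1}^2=(W_{i,k}-\phi W_{i,k-1})(W_{i,k}+\phi^{-1}W_{i,k-1}).
$$
Inserting (i), the two factors become $w_-(i)(-1/\phi)^{k-1}$ and $w_+(i)\phi^{k-1}$. Their product is $(-1)^{k-1}w_+(i)w_-(i)=(-1)^k\eta_i$, so $\eta_i$ is independent of $k$. It is an integer because the form has integer coefficients.

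For positivity, write $a=\lfloor\phi i\rfloor$. Using $\lfloor\phi^2 x\rfloor=\lfloor\phi x\rfloor+x$ for integers $x$, we get $W_{i,1}=\lfloor\phi a\rfloor$ and $W_{i,2}=W_{i,1}+a$. Hence
$$
w_-(i)=\phi(\phi W_{i,1}-W_{i,2})=\phi\Big(\tfrac{W_{i,1}}{\phi}-a\Big)=\lfloor\phi a\rfloor-\phi a\in(-1,0),
$$
since $\phi a$ is irrational. Since $w_+(i)>0$, it follows that $\eta_i>0$, so $\eta_i\in\bbN$.

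\emph{(iii).} From (i) we compute $W_{i,k+1}-\phi W_{i,k}=w_-(i)(-1/\phi)^k$. By the bound $w_-(i)\in(-1,0)$ from (ii):
\begin{itemize}
\item for odd $k$, this quantity lies in $(0,\phi^{-1})$;
\item for even $k\geq 2$, it lies in $(-\phi^{-2},0)$.
\end{itemize}
In both cases it lies in $(\phi^{-1}-1,\phi^{-1})$, which is exactly the condition $W_{i,k+1}=\lfloor\phi W_{i,k}+\phi^{-1}\rfloor$.

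\emph{(iv).} This is the main obstacle, and I would use Zeckendorf representations with indices $F_2=1,F_3=2,\dots$. First I would show that shifting all Zeckendorf indices of $N$ up by one equals $\lfloor\phi N+\phi^{-1}\rfloor$. This is the classical ``Fibonacci successor'' identity, proved by estimating $\sum \phi F_j - F_{j+1}=\sum (-1/\phi)^j$ over a non-adjacent index set. Next I would show that $i\mapsto\lfloor\phi\lfloor\phi i\rfloor\rfloor$ enumerates, in increasing order, precisely those $N$ whose smallest Zeckendorf index is $2$. This follows from the Beatty-sequence characterisation: $\lfloor\phi i\rfloor$ enumerates the numbers with even smallest index, and applying the successor shifts the parity, together with the complementarity of the Beatty sequences $\lfloor\phi i\rfloor$ and $\lfloor\phi^2 i\rfloor$.

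Combining this with (iii), $W_{i,k}$ is obtained from $W_{i,1}$ by shifting all indices by $k-1$. Therefore $(i,k)$ is recovered uniquely from $N$: the number $k+1$ is the smallest Zeckendorf index of $N$, and $i$ is determined by shifting the representation back down to index $2$.

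\emph{(v).} Writing $U_n$ in the form of (i), we have $U_{n+1}-\phi U_n=c(-1/\phi)^n$ for some constant $c$. For $n\geq\nu$ with $\nu$ large, $|c|\phi^{-n}<\phi^{-2}$, so $U_{n+1}=\lfloor\phi U_n+\phi^{-1}\rfloor$ holds by the same interval argument as in (iii). By (iv), $U_\nu=W_{i,k_0}$ for a unique pair $(i,k_0)$. Since both $U$ and row $i$ obey the same successor map (by (iii)), induction gives $U_n=W_{i,n+\kappa}$ with $\kappa=k_0-\nu$ for all $n\geq\nu$. Uniqueness of $i$ follows from (iv), since any other row agreeing with $U$ eventually would represent the same integer $U_n$ twice.
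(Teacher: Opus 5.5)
Your arguments for (i), (ii), (iii) and (v) are correct, and your route differs from the paper's. The paper proves (iii) by induction from the floor identities in Lemma \ref{lem:floor} (ii), (iii), and simply cites \cite{Mor80} for (iv) and (v). You instead read (iii) off directly from $W_{i,k+1}-\phi W_{i,k}=w_-(i)(-1/\phi)^k$ together with $w_-(i)\in(-1,0)$, which is cleaner. You also reduce (v) to (iii) and (iv) by observing that any Fibonacci-type sequence eventually obeys the successor map $N\mapsto\lfloor\phi N+\phi^{-1}\rfloor$, which is a nice self-contained argument. The weak point is your sketch of (iv), which as written does not go through.

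There is first a sign slip: the correct identity is $F_{j+1}-\phi F_j=(-1/\phi)^j$, not $\phi F_j-F_{j+1}=(-1/\phi)^j$. With your sign, the non-adjacency estimate only places $\phi N+\phi^{-1}-\sum_{j\in S}F_{j+1}$ in $(\phi^{-3},2\phi^{-1})$, so the floor identity does not follow. The more substantive problem is that ``applying the successor shifts the parity'' does not produce the first column. The successor of $a=\lfloor\phi i\rfloor$ is $\lfloor\phi\lfloor\phi i\rfloor+\phi^{-1}\rfloor=\lfloor\phi^2 i\rfloor=W_{i,1}+1$ (Lemma \ref{lem:floor} (ii)). This number has \emph{odd} smallest Zeckendorf index, which is exactly what complementarity tells you, so it is not $W_{i,1}$. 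The missing observation is that $W_{i,1}=\lfloor\phi a\rfloor$ is not the successor of $a$. If $N=\sum_{j\in S}F_j$ has even smallest index, then $0<\sum_{j\in S}(-1/\phi)^j<\phi^{-1}$, so $\lfloor\phi N\rfloor=\mathrm{succ}(N)-1$, and hence $W_{i,1}=\lfloor\phi^2 i\rfloor-1$. You then still need that $M\mapsto M-1$ maps the numbers with odd smallest index $m\geq 3$ bijectively onto those with smallest index $2$. One direction uses $F_m-1=F_{m-1}+F_{m-3}+\dots+F_2$ for odd $m$; the inverse direction uses the carries $F_3+F_4=F_5$, $F_5+F_6=F_7,\dots$. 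With these repairs your Zeckendorf proof of (iv), and hence your proof of (v), is complete, modulo the classical characterisation of $\lfloor\phi i\rfloor$ that you cite.
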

\begin{proof}
    Points (i) and (ii) are simple consequences of the recurrence \eqref{eq:recurrence}. For $\eta_i > 0$ we just note that clearly $w_+(i) > 0$ and from Lemma \ref{lem:floor} (i) with $N = \lfloor \phi i \rfloor$ we have
    $$
    w_-(i) = \phi^2 \lfloor \phi \lfloor \phi i\rfloor\rfloor - \phi \lfloor \phi^2 \lfloor \phi i\rfloor\rfloor < 0
    $$
    (also see \eqref{ineq:w_minus} below). We will show (iii) by induction. From Lemma \ref{lem:floor} (ii) with $N = \lfloor \phi i\rfloor$ we obtain the base case $W_{i, 2} = \lfloor \phi W_{i, 1} + \phi^{-1}\rfloor$ . By Lemma \ref{lem:floor} (iii) with $N = W_{i, k-1}$ we then have inductively for $k \geq 2$
    \begin{align*}
        W_{i, k+1} & = W_{i, k} + W_{i, k-1} = \lfloor \phi W_{i, k-1} + \phi^{-1}\rfloor + W_{i, k-1} \\
        & = \lfloor \phi \lfloor \phi W_{i, k-1} + \phi^{-1}\rfloor + \phi^{-1}\rfloor = \lfloor \phi W_{i, k} + \phi^{-1}\rfloor.
    \end{align*}
    Points (iv) and (v) are proven in \cite{Mor80}.
\end{proof}

Point (iv) can be rephrased as stating that the sets $\{W_{i, k}: k \in \bbN\}, i \in \bbN$ partition the positive integers $\bbN$. The number $\eta_i$ can be considered as an invariant of the sequence $(W_{i, k})_k$. The initial portion of the Wythoff array is given in the following table:
\begin{center}
	\begin{tabular}{r||c||c|c|c|c|c|c}
		$i$ & $\eta_i$ & $W_{i, 1}$ & $W_{i, 2}$ & $W_{i, 3}$ & $W_{i, 4}$ & $W_{i, 5}$ & $W_{i, 6}$ \\
		\hline
		$1$ & $1$ & $1$ & $2$ & $3$ & $5$ & $8$ & $13$ \\
		\hline
		$2$ & $5$ & $4$ & $7$ & $11$ & $18$ & $29$ & $47$ \\
		\hline
		$3$ & $4$ & $6$ & $10$ & $16$ & $26$ & $42$ & $68$ \\
		\hline
		$4$ & $9$ & $9$ & $15$ & $24$ & $39$ & $63$ & $102$ \\
		\hline
		$5$ & $16$ & $12$ & $20$ & $32$ & $52$ & $84$ & $136$ \\
		\hline
		$6$ & $11$ & $14$ & $23$ & $37$ & $60$ & $97$ & $157$ \\
		\hline
		$7$ & $19$ & $17$ & $28$ & $45$ & $73$ & $118$ & $191$ \\
		\hline
		$8$ & $11$ & $19$ & $31$ & $50$ & $81$ & $131$ & $212$
	\end{tabular}
\end{center}
Observe that the rows $(W_{i, k})_k$ for those $i$ such that $\eta_i \in \{1, 4, 5, 9, 11\}$ correspond to the observed sequences in \eqref{levels_partition}.

\begin{rem} \label{rem:w_numbers}
    Since for $i \in \bbN$ it holds
    $$
    \phi \lfloor \phi i\rfloor - (\lfloor\phi i \rfloor + i -1 ) = \phi^{-1} \lfloor \phi i\rfloor - i + 1 = 1 - \phi^{-1} \{\phi i\} \in (\phi^{-2}, 1),
    $$
    we have $W_{i, 1} = \lfloor \phi \lfloor \phi i\rfloor\rfloor = \lfloor \phi i \rfloor + i - 1$. Then also
    $$
    W_{i, 2} = \lfloor \phi^2 \lfloor \phi i\rfloor \rfloor = \lfloor \phi \lfloor \phi i\rfloor \rfloor + \lfloor \phi i\rfloor = 2\lfloor \phi i \rfloor + i - 1
    $$
    and in general
    \begin{align} \label{eq:W_fib}
        W_{i, k} = F_{k+1} \lfloor \phi i\rfloor + F_{k}(i-1).
    \end{align}
    By Theorem \ref{thm:wythoff_basics} (i) thus
    \begin{align}\label{ineq:wp_wm_expressions}
        w_+(i) = \phi\lfloor\phi i\rfloor + i - 1 \quad \text{and} \quad w_-(i) = -\phi^{-1} \lfloor\phi i\rfloor + i - 1 = \phi^{-1} \{\phi i\} - 1.
    \end{align}
    In particular we see that
    \begin{align} \label{ineq:w_minus}
        \phi^{-2} < -w_-(i) < 1
    \end{align}
    for $i \in \bbN$. For $\eta_i$ we obtain
    $$
    \eta_i = \sqrt5 i (\phi - \{\phi i\})-\{\phi i\}(1-\{\phi i\}) - 1.
    $$
    The quantities $w_+(i)$ and $\eta_i$ thus grow linearly in $i$ with the explicit bounds
    \begin{align} \label{lin_growth}
        \phi i \leq w_+(i) < (\phi^2+1)i \quad \text{and} \quad i \leq \eta_i < (\phi^2+1) i
    \end{align}
    holding for all $i \in \bbN$.
\end{rem}

The Wythoff array will be an essential tool in analyzing the sum in \eqref{eq:asymp_expansion}. Important aspects will be discussed in the next part.

\subsection{Dual array}

In \eqref{approx_half} we have seen that we are only interested in those terms $W_{i, k}$ which are strictly less than $F_n/2$. In this subsection we will analyze which terms these are. To understand the edge case, we will first look where equality occurs. Notice that $F_n$ for $n \in \bbN$ is even if and only if $n = 3\ell$ for some $\ell \in \bbN$.

\begin{lem} \label{lem:half_fib}
	For $i, k, n \in \bbN$ it holds
	$$
	W_{i, k} = \frac{F_n}2
	$$
	if and only if
	$$
	(i, k, n) = \left(\frac{F_{3\ell-2}+1}2, 1, 3 \ell\right)
	$$
	for some $\ell \in \bbN$.
\end{lem}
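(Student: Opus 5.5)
The plan is to reduce everything to the uniqueness statement in Theorem \ref{thm:wythoff_basics} (iv). Since $W_{i,k} \in \bbN$, the equation $W_{i,k} = F_n/2$ requires $F_n$ to be even. As noted before the lemma, this forces $n = 3\ell$ for some $\ell \in \bbN$. Conversely, for each $\ell$ the number $F_{3\ell}/2$ is a positive integer. By Theorem \ref{thm:wythoff_basics} (iv) it therefore has exactly one position $(i,k)$ in the Wythoff array. It thus suffices to show that this position is $i = (F_{3\ell-2}+1)/2$, $k = 1$; this gives both directions at once.

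To verify this, I will use the closed form from Remark \ref{rem:w_numbers}, namely $W_{i,1} = \lfloor \phi i\rfloor + i - 1 = \lfloor \phi^2 i \rfloor - 1$. First, $i = (F_{3\ell-2}+1)/2$ is a positive integer, because $3 \nmid 3\ell-2$ and hence $F_{3\ell-2}$ is odd. Next I will use the standard identity $\phi F_m = F_{m+1} - (-\phi^{-1})^m$, which follows from Binet's formula or by induction. Applying it twice gives
$$
\phi^2 F_m = F_{m+2} - (-\phi^{-1})^m .
$$
With $m = 3\ell-2$ this yields
$$
\phi^2 i = \frac{F_{3\ell}}{2} + \frac{\phi^2 - (-\phi^{-1})^{3\ell-2}}{2}.
$$

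The only delicate step is to check that the second summand lies in $[1,2)$, so that $\lfloor \phi^2 i\rfloor = F_{3\ell}/2 + 1$ and hence $W_{i,1} = F_{3\ell}/2$. I will split into two cases according to the parity of $3\ell-2$.
\begin{itemize}
\item If $3\ell-2$ is odd, then $-(-\phi^{-1})^{3\ell-2} = \phi^{-(3\ell-2)} \in (0, \phi^{-1}]$. The summand then lies in $(\phi^2/2, (\phi^2+\phi^{-1})/2] \subseteq (1.3, 1.62]$.
\item If $3\ell-2$ is even, then $3\ell - 2 \geq 4$, so the correction is at least $-\phi^{-4}/2$. The summand then lies in $[(\phi^2-\phi^{-4})/2, \phi^2/2) \subseteq (1.2, 1.31)$.
\end{itemize}
In both cases the summand lies in $[1,2)$, which completes the computation. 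As a sanity check, $\ell = 1$ gives $F_3/2 = 1 = W_{1,1}$ with $i = (F_1+1)/2 = 1$.

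Combining the two parts finishes the argument. We have shown $W_{(F_{3\ell-2}+1)/2,\,1} = F_{3\ell}/2$, which is the "if" direction. For the "only if" direction, suppose $W_{i,k} = F_n/2$. Then $n = 3\ell$ by the parity argument. By the uniqueness in Theorem \ref{thm:wythoff_basics} (iv), $(i,k)$ must coincide with the position just found. I do not expect a real obstacle; the only care needed is the bound on the fractional correction $(-\phi^{-1})^{3\ell-2}$ in the two parity cases.
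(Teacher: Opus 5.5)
Your proposal is correct and follows the same route as the paper: for $n=3\ell$ it reduces everything to the uniqueness in Theorem \ref{thm:wythoff_basics} (iv), and then verifies $W_{(F_{3\ell-2}+1)/2,\,1}=F_{3\ell}/2$ via Binet's formula. The only difference is in that verification. You evaluate the single floor $\lfloor\phi^2 i\rfloor-1$ from Remark \ref{rem:w_numbers}, with a parity split on $3\ell-2$. The paper instead computes the two nested floors $\lfloor\phi\lfloor\phi i\rfloor\rfloor$ directly, passing through the intermediate value $(F_{3\ell-1}+1)/2$, which it reuses in the remark that follows.
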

\begin{proof}
    Since by Theorem \ref{thm:wythoff_basics} (iv) the equation $W_{i, k} = {F_n}/2$ has a unique solution in $i, k \in \bbN$ for a fixed $n=3\ell$, it suffices to show that
    $$
    W_{\frac12(F_{3\ell-2}+1), 1} = \frac{F_{3\ell}}2
    $$
    holds for all $\ell \in \bbN$. We will use the well-known formula
    \begin{align} \label{eq:binet}
        F_n = \frac{1}{\sqrt5} \left( \phi^n - \left(-\frac1\phi\right)^n \right)
    \end{align}
    and the straightforward facts that
    $$
    \frac{F_{3\ell-2}+1}2, \frac{F_{3\ell-1}+1}2, \frac{F_{3\ell}}2 \in \bbN.
    $$
    By
    $$
    \phi \frac{F_{3\ell-2}+1}2 - \frac{F_{3\ell-1}+1}2 = \frac{\phi-1}2 - \frac12 \left(-\frac1\phi\right)^{3\ell-2} \in (0, 1)
    $$
    and
    $$
    \phi \frac{F_{3\ell-1}+1}2 - \frac{F_{3\ell}}2 = \frac{\phi}2 - \frac12 \left(-\frac1\phi\right)^{3\ell-1} \in (0, 1)
    $$
    we have
    $$
    W_{\frac12(F_{3\ell-2}+1), 1} = \left\lfloor \phi \left\lfloor \phi \frac{F_{3\ell-2}+1}2\right\rfloor \right\rfloor = \left\lfloor \phi \frac{F_{3\ell-1}+1}2 \right\rfloor = \frac{F_{3\ell}}2
    $$
    as desired.
\end{proof}

In particular we see that the numbers $F_{3\ell}/2$ always appear at the very start of a sequence $(W_{i, k})_k$.

\begin{rem}
    Having shown above that
    $$
    \left\lfloor \phi \frac{F_{3\ell-2}+1}2\right\rfloor = \frac{F_{3\ell-1}+1}2 \quad \text{and} \quad \left\lfloor \phi \frac{F_{3\ell-1}+1}2 \right\rfloor = \frac{F_{3\ell}}2,
    $$
    it is natural to ask if one can similarly go from $\frac12 F_{3\ell}$ to $\frac12(F_{3\ell+1}+1)$. This is indeed the case with some modification. By
    $$
	\phi \frac{F_{3\ell}}2 + \phi^{-1} - \frac{F_{3\ell+1}+1}2 = \frac{2-\phi}{2\phi} - \frac12 \left(-\frac1\phi\right)^{3\ell} \in (0, 1)
	$$
	we get
	\begin{align} \label{eq:rem_cont}
		\left\lfloor \phi \frac{F_{3\ell}}2  + \phi^{-1} \right\rfloor = \frac{F_{3\ell+1}+1}2.
	\end{align}
    Note the similarities with Theorem \ref{thm:wythoff_basics} (iii).
\end{rem}

We now characterize the indices $k$ for which $W_{i, k} < F_n/2$ holds.

\begin{thm} \label{thm:const_shift}
	For every $i \in \bbN$ there is a $\mu_i \in \bbN$ such that for all $n, k \in \bbN$ it holds
	$$
	W_{i, k} < \frac12 F_n \quad \text{if and only if} \quad n-k > \mu_i.
	$$
\end{thm}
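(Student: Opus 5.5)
We need to show: for fixed $i$, the truth of $W_{i,k} < F_n/2$ depends only on $n-k$, and it is monotone in $n-k$. The plan is to reduce both claims to a single comparison between consecutive Fibonacci-recurrent sequences, and then pick $\mu_i$ as the threshold of that comparison.

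\emph{Reduction to a shift-invariant comparison.} The key claim is that for fixed $i$, $k \geq 1$, $n\ge 1$,
$$
W_{i,k} < \tfrac12 F_n \iff W_{i,k+1} < \tfrac12 F_{n+1}.
$$
Given this, the statement depends only on $d = n-k$. It is also monotone in $d$, since $F_n$ is nondecreasing and $W_{i,k}$ is increasing in $k$. Moreover, it is false for small $d$ and true for large $d$. For small $d$, e.g. $d\le 0$, we have $W_{i,k}\ge F_{k+1}\ge F_n > F_n/2$ by \eqref{eq:W_fib}. For large $d$, $W_{i,k}\approx w_+(i)\phi^k/\sqrt5$ while $F_n/2\approx \phi^n/(2\sqrt5)$. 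Then $\mu_i$ is the largest $d$ for which the inequality fails, and $\mu_i\ge 0$; if needed, we shift the convention so that $\mu_i\in\bbN$. We also need a base case where $k=1$ or $n$ is small, so that the equivalence pins down every pair $(n,k)$ from its value on one diagonal.

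\emph{Proof of the equivalence.} Writing $2W_{i,k} < F_n$ as a strict integer inequality, the forward direction does not follow from simply adding two inequalities, so we use Theorem \ref{thm:wythoff_basics} (iii): $W_{i,k+1} = \lfloor \phi W_{i,k} + \phi^{-1}\rfloor$. For the Fibonacci side, we use either $F_{n+1} = \lfloor \phi F_n + \phi^{-1}\rfloor$, or an analogue for $F_n/2$ built from \eqref{eq:rem_cont} and Lemma \ref{lem:half_fib}. The map $x\mapsto \lfloor \phi x+\phi^{-1}\rfloor$ is nondecreasing, so $W_{i,k}\le \lfloor F_n/2\rfloor$ implies $W_{i,k+1}\le \lfloor\phi\lfloor F_n/2\rfloor+\phi^{-1}\rfloor$. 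We then compare this with $F_{n+1}/2$ using Binet's formula \eqref{eq:binet}, splitting cases by $n \bmod 3$ (parity of $F_n$). Lemma \ref{lem:half_fib} is what handles the edge case of equality $W_{i,k}=F_n/2$: this happens only for $k=1$ with $i=(F_{3\ell-2}+1)/2$, and there equality propagates to $W_{i,2}$ versus $F_{3\ell+1}/2$, which is not an integer, via \eqref{eq:rem_cont}. The reverse direction goes the same way, or alternatively by applying the recurrence backward: $W_{i,k}=W_{i,k+1}-W_{i,k-1}$, together with the exact relation $W_{i,k} = \lfloor \phi^{-1} W_{i,k+1}\rfloor$ or $\lceil\cdot\rceil$ type identities, plus the bound \eqref{ineq:w_minus}.

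\emph{Expected obstacle, and a fallback.} The main difficulty is the boundary comparison when $W_{i,k}$ is within $O(1)$ of $F_n/2$. There the floor operations could in principle flip the inequality after one step, and the three residues of $n$ mod $3$ together with the sign of $(-1/\phi)^n$ must be tracked carefully. If the floor approach gets messy, the fallback is an explicit formula. By Theorem \ref{thm:wythoff_basics} (i), $2W_{i,k}-F_n$ equals
$$
\tfrac{1}{\sqrt5}\left(\phi^k(2w_+(i) - \phi^{d}) - (-\phi^{-1})^k(2w_-(i) - (-\phi^{-1})^{d})\right),
$$
so the sign is governed by $2w_+(i)-\phi^d$ up to a correction of size $<3\phi^{-k}/\sqrt5$. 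Since $2W_{i,k}-F_n$ is an integer, once $|2w_+(i)-\phi^d|\phi^k$ is large this correction is irrelevant; the remaining finitely many borderline configurations are exactly the equality cases covered by Lemma \ref{lem:half_fib}. We would then set $\mu_i=\max\{d: \phi^d < 2w_+(i)\}$, adjusted at the equality case.
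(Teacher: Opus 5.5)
Your proof is correct, but it is organized differently from the paper's. The paper defines $\mu_i$ from $k=1$ via the bracket $\frac12F_{\mu_i+1}\le W_{i,1}<\frac12F_{\mu_i+2}$. It then does the floor arithmetic only once, to obtain $\frac12F_{\mu_i+2}<W_{i,2}<\frac12F_{\mu_i+3}$, using Theorem \ref{thm:wythoff_basics} (iii), Lemma \ref{lem:floor} (iv), and Lemma \ref{lem:half_fib} with \eqref{eq:rem_cont} in the equality case. Finally it gets $\frac12F_{\mu_i+k}\le W_{i,k}<\frac12F_{\mu_i+k+1}$ for all $k\ge3$ by adding the two previous brackets, since both sides obey the Fibonacci recurrence.

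You instead prove the one-step shift equivalence at every $k$ and read off the threshold from the monotonicity of $F_n$. This is legitimate because (iii) holds for every $k$. Your two comparisons are exactly Lemma \ref{lem:floor} (iv) together with $F_{n+1}=\lfloor\phi F_n+\phi^{-1}\rfloor$ for $n\ge2$; the latter is (iii) for $i=1$, and $n=1$ is trivial. Concretely:
\begin{itemize}
\item if $2W_{i,k}+1\le F_n$, the right half of (iv) gives $2W_{i,k+1}<F_{n+1}$;
\item if $2W_{i,k}-1\ge F_n$, the left half gives $2W_{i,k+1}>F_{n+1}$;
\item the case $2W_{i,k}=F_n$ is handled by \eqref{eq:rem_cont}.
\end{itemize}
So the Binet casework modulo $3$ can be replaced by these citations. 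Your route makes the dependence on $n-k$ alone explicit and needs no separate treatment of $k=1,2$ versus $k\ge3$. The paper's route does the delicate step only once and produces the explicit two-sided bracket that Proposition \ref{prop:mu_formula} later uses.

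Two small corrections. First, in the forward direction the hypothesis gives $W_{i,k}\le\lceil F_n/2\rceil-1$, not merely $W_{i,k}\le\lfloor F_n/2\rfloor$. When $F_n$ is even, the weaker bound leads to $\lfloor\phi F_n/2+\phi^{-1}\rfloor=\frac12(F_{n+1}+1)>\frac12F_{n+1}$ by \eqref{eq:rem_cont}, so the comparison as you wrote it would fail. Second, no change of convention for $\mu_i$ is needed. By \eqref{eq:W_fib} you have $W_{i,k}\ge F_{k+1}$, so $2W_{i,k}\ge F_{k+2}$; hence the inequality fails for every $n-k\le2$, and $\mu_i\ge2$. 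Separately, your fallback's claim that the remaining borderline configurations are exactly the equality cases is asserted rather than proved, but your main argument does not need it.
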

\begin{proof}
    The proof is via induction on $k$. The case of $k=1$ gives us the value for $\mu_i$. For the induction step, the cases $k=2$ and $k \geq 3$ will be considered separately. To better organize the proof we will do it in multiple small steps.

    \textit{Step 1, $k=1$.} Set
    \begin{align} \label{mu_i}
        \mu_i \coloneqq \min\left\{m \in \bbN: W_{i, 1} < \frac12 F_{m+2}\right\} \geq 2,
    \end{align}
    so that
    \begin{align} \label{ineq:k=1}
        \frac12 F_{\mu_i+1} \leq W_{i, 1} < \frac12 F_{\mu_i+2}.
    \end{align}
    By the strict monotonicity of $(F_n)_{n \geq 2}$ this is equivalent to the statement for $k=1$.

    \textit{Step 2, $k=2$.} Before coming to the case of general $k$ we will first show that
    \begin{align} \label{ineq:k=2}
        \frac12 F_{\mu_i+2} < W_{i, 2} < \frac12 F_{\mu_i+3},
    \end{align}
    which is the statement for $k=2$ in a slightly stronger form.

    \textit{Step 2.1, the right inequality.} Starting from \eqref{ineq:k=1}, since $W_{i, 1}, F_{\mu_i+2} \in \bbN$ it holds
    $$
    W_{i, 1} < \frac12(2W_{i, 1}+1) \leq \frac12 F_{\mu_i+2}.
    $$
    By Lemma \ref{lem:floor} (iv) with $N=W_{i, 1}$ and Theorem \ref{thm:wythoff_basics} (iii) we then have
    \begin{align*}
        W_{i, 2} & = \lfloor \phi W_{i, 1} + \phi^{-1}\rfloor < \frac12 \lfloor \phi (2W_{i, 1}+1) + \phi^{-1}\rfloor \\
        & \leq \frac12 \lfloor \phi F_{\mu_i+2} + \phi^{-1}\rfloor = \frac12 F_{\mu_i+3},
    \end{align*}
    giving the right inequality in \eqref{ineq:k=2}.

    \textit{Step 2.2, the left inequality.} For the left inequality we need to consider two cases.

    \textit{Step 2.2.1, the case of a strict inequality.} If $\frac12 F_{\mu_i+1} < W_{i, 1}$, then the left inequality of \eqref{ineq:k=2} follows in an analogous manner as in step 2.1, now using the left inequality of Lemma \ref{lem:floor} (iv).

    \textit{Step 2.2.2, the case of equality.} If $\frac12 F_{\mu_i+1} = W_{i, 1}$ then by Lemma \ref{lem:half_fib} we have (among others) $F_{\mu_i+1} = F_{3\ell}$ for some $\ell \in \bbN$. Using \eqref{eq:rem_cont} it then follows that
    $$
    \frac12 F_{\mu_i+2} = \frac12 F_{3\ell+1} < \left\lfloor \phi \frac{F_{3\ell}}{2} + \phi^{-1} \right\rfloor = \lfloor \phi W_{i, 1} + \phi^{-1} \rfloor = W_{i, 2},
    $$
    also giving the left inequality of \eqref{ineq:k=2}. This fully establishes \eqref{ineq:k=2}.

    \textit{Step 3, $k \geq 3$.} Combining \eqref{ineq:k=1} and \eqref{ineq:k=2} with the recurrence relation \eqref{eq:recurrence} for $(W_{i, k})_k$ the desired claim follows.
\end{proof}

The number $\mu_i$ can be seen as a characteristic of $(W_{i, k})_k$, similar to $\eta_i$ above. We even have the following simple expression.

\begin{prop} \label{prop:mu_formula}
    For all $i \in \bbN$ it holds
    $$
    \mu_i = \lfloor\log_\phi(2 w_+(i))\rfloor.
    $$
\end{prop}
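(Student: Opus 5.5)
The plan is to read off $\mu_i$ from the asymptotic behaviour of the rows of the Wythoff array, using the exact characterization of Theorem \ref{thm:const_shift} for \emph{all} $k$. We then pass to the limit $k \to \infty$, where the Binet-type formulas of Theorem \ref{thm:wythoff_basics} (i) and \eqref{eq:binet} make the comparison explicit. The claim $\mu_i = \lfloor \log_\phi(2w_+(i))\rfloor$ is equivalent to
$$
\phi^{\mu_i} \leq 2 w_+(i) < \phi^{\mu_i+1},
$$
so it suffices to prove these two inequalities.

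\emph{Step 1: the limiting inequalities.} Fix $m \in \bbN$ and set $n = k+m$. By Theorem \ref{thm:wythoff_basics} (i) and \eqref{eq:binet}, the inequality $W_{i,k} < \frac12 F_{k+m}$ is equivalent to
$$
w_+(i) - w_-(i)\left(-\phi^{-2}\right)^{k} < \frac12\left(\phi^m - \phi^{-k}\left(-\phi^{-1}\right)^{k+m}\right),
$$
after multiplying by $\sqrt5$ and dividing by $\phi^k$. By Theorem \ref{thm:const_shift}, this inequality holds for every $k$ when $m > \mu_i$, and fails for every $k$ when $m \leq \mu_i$.
\begin{itemize}
    \item Taking $m = \mu_i+1$ and letting $k \to \infty$, the error terms vanish, so $2w_+(i) \leq \phi^{\mu_i+1}$.
    \item Taking $m = \mu_i$, the reversed inequality $\geq$ holds for every $k$, and the limit gives $2w_+(i) \geq \phi^{\mu_i}$.
\end{itemize}
The left inequality is therefore already established.

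\emph{Step 2: excluding equality on the right.} This is the only delicate point, since a strict inequality is lost in the limit. By \eqref{ineq:wp_wm_expressions},
$$
2w_+(i) = 2\lfloor \phi i\rfloor\,\phi + 2(i-1).
$$
On the other hand, $\phi^{\mu_i+1} = F_{\mu_i+1}\phi + F_{\mu_i}$, which follows from $\phi^2 = \phi+1$ by induction. Since $\phi$ is irrational, equality $2w_+(i) = \phi^{\mu_i+1}$ would force
$$
F_{\mu_i+1} = 2\lfloor\phi i\rfloor \quad\text{and}\quad F_{\mu_i} = 2(i-1).
$$
Then two consecutive Fibonacci numbers would both be even. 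This contradicts $\gcd(F_{\mu_i}, F_{\mu_i+1}) = 1$; note that $\mu_i \geq 2$, so both numbers are positive. Hence the right inequality is strict, which proves the proposition.

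I expect Step 2 to be the main obstacle, namely recovering strictness after the limit. The integrality and irrationality argument above should handle it cleanly. A direct alternative would be to compare $W_{i,1}$ with $\frac12 F_{\mu_i+1}$ and $\frac12 F_{\mu_i+2}$ using \eqref{ineq:k=1} and \eqref{ineq:w_minus}. However, that route requires tracking the small $w_-(i)$ terms, including the edge case of Lemma \ref{lem:half_fib}, whereas the limiting argument avoids this.
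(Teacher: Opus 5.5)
Your proof is correct and follows the paper's argument. The paper likewise passes to the limit $k \to \infty$ in $\frac12 F_{\mu_i+k} \leq W_{i,k} < \frac12 F_{\mu_i+k+1}$ using the Binet-type formulas, and then rules out equality via Lemma \ref{lem:no_sol}, whose irrationality-and-parity argument is exactly your Step 2. The only difference is that you rightly note only the right-hand equality must be excluded for the floor identity, whereas the paper excludes both.
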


For the proof we need an auxiliary result.

\begin{lem} \label{lem:no_sol}
    For all $m, i \in \bbN$ it holds $\phi^m \neq 2 w_+(i)$.
\end{lem}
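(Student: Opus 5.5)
We want to show that $\phi^m = 2w_+(i)$ is impossible for $m, i \in \bbN$. The plan is to argue in the ring $\bbZ[\phi]$: write both sides in the basis $\{1, \phi\}$ and compare coefficients, using that $1$ and $\phi$ are linearly independent over $\bbQ$ because $\phi$ is irrational.

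\textbf{Step 1: the left-hand side.} From the recurrence $\phi^2 = \phi + 1$ one gets by induction
$$
\phi^m = F_m \phi + F_{m-1} \quad \text{for all } m \in \bbN .
$$

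\textbf{Step 2: the right-hand side.} By \eqref{ineq:wp_wm_expressions} we have $w_+(i) = \phi \lfloor \phi i \rfloor + i - 1$. Hence
$$
2 w_+(i) = 2\lfloor \phi i\rfloor \, \phi + 2(i-1).
$$

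\textbf{Step 3: compare coefficients.} If $\phi^m = 2w_+(i)$, then irrationality of $\phi$ forces
$$
F_m = 2\lfloor \phi i\rfloor \quad \text{and} \quad F_{m-1} = 2(i-1).
$$
So $F_m$ and $F_{m-1}$ are both even. Since $\gcd(F_m, F_{m-1}) = 1$, this is a contradiction. Concretely, the parity of consecutive Fibonacci numbers is never simultaneously even, by the fact noted before Lemma \ref{lem:half_fib} that $F_n$ is even iff $3 \mid n$. The case $m = 1$ only needs $F_0 = 0$ and $F_1 = 1$, with $F_1$ odd, so the same argument applies.

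\textbf{Main obstacle.} There is essentially none. The only point needing care is to justify the coefficient comparison via the irrationality of $\phi = (1+\sqrt5)/2$, and to note that $\lfloor \phi i \rfloor$ and $i-1$ are integers, so that both sides really lie in $\bbZ + \bbZ\phi$.
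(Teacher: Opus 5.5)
Your proof is correct and follows the paper's argument exactly. You expand $\phi^m = F_m\phi + F_{m-1}$ and $2w_+(i) = 2\lfloor \phi i\rfloor\phi + 2(i-1)$, compare coefficients using the irrationality of $\phi$, and conclude from the fact that two consecutive Fibonacci numbers cannot both be even (your coprimality remark and the check of $m=1$ are harmless refinements of that same step).
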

\begin{proof}
    For the sake of contradiction assume there are $m, i \in \bbN$ with $\phi^m=2w_+(i)$. Using \eqref{ineq:wp_wm_expressions} this would be equivalent to
    $$
    F_m \phi + F_{m-1} = \phi^m = 2w_+(i) = 2 \lfloor \phi i\rfloor \phi + 2(i-1),
    $$
    by irrationality of $\phi$ thus $F_m = 2 \lfloor \phi i\rfloor$ and $F_{m-1} = 2(i-1)$. However, no two consecutive terms of the Fibonacci sequences are both even (see the remark before Lemma \ref{lem:half_fib}), giving the desired contradiction.
\end{proof}

\begin{proof}[Proof of Proposition \ref{prop:mu_formula}]
    From Theorem \ref{thm:const_shift} we have $\frac12 F_{\mu_i+k} \leq W_{i, k} < \frac12 F_{\mu_i+k+1}$ for all $k \in \bbN$, which by \eqref{eq:binet} and Theorem \ref{thm:wythoff_basics} (i) can be rewritten a
    $$
    \phi^{\mu_i+k} - \left(-\frac1\phi\right)^{\mu_i+k} \leq 2 \left(w_+(i) \phi^k - w_-(i) \left(-\frac1\phi\right)^k \right) < \phi^{\mu_i+k+1} - \left(-\frac1\phi\right)^{\mu_i+k+1}.
    $$
    Dividing by $\phi^k$ and letting $k$ go to infinity we get $\phi^{\mu_i} \leq 2 w_+(i) \leq \phi^{\mu_i+1}$. By Lemma \ref{lem:no_sol} the equality cases cannot occur, so that even $\phi^{\mu_i} < 2 w_+(i) < \phi^{\mu_i+1}$ and the claim follows.
\end{proof}

For $n-k > \mu_i$ we define the \emph{dual array} (a different notion of duality 
than in \cite{Kim95})
\begin{align} \label{eq:dual}
    \begin{split}
        W_{i, n-k}^* & \coloneqq (-1)^k (F_{n-1} W_{i, k} - F_n W_{i, k-1}) \\
        & = \frac1{\sqrt5} \left(w_+^*(i) \phi^{n-k} - w_-^*(i) \left(-\frac1\phi\right)^{n-k}\right).
    \end{split}
\end{align}
In the second equality we have $w_+^*(i) = -w_-(i)$ and $w_-^*(i) = -w_+(i)$ as follows from \eqref{eq:binet} and Theorem \ref{thm:wythoff_basics} (i). Continuing $W_{i, k}$ also for $k \leq 0$ by requiring \eqref{eq:recurrence} for general $k \in \bbZ$, we may also write
\begin{align} \label{eq:extension}
    W_{i, n-k}^* = (-1)^{n-k} W_{i, -(n-k)},
\end{align}
compare with \cite{And14}. Some values for the dual array are given in the following table:
\begin{center}
	\begin{tabular}{r||c||c|c|c|c|c|c}
		$i$ & $\mu_i$ & $W^*_{i, \mu_i+1}$ & $W^*_{i, \mu_i+2}$ & $W^*_{i, \mu_i+3}$ & $W^*_{i, \mu_i+4}$ & $W^*_{i, \mu_i+5}$ & $W^*_{i, \mu_i+6}$ \\
		\hline
		$1$ & $2$ & $1$ & $2$ & $3$ & $5$ & $8$ & $13$ \\
		\hline
		$2$ & $5$ & $7$ & $11$ & $18$ & $29$ & $47$ & $76$ \\
		\hline
		$3$ & $5$ & $4$ & $6$ & $10$ & $16$ & $26$ & $42$ \\
		\hline
		$4$ & $6$ & $9$ & $15$ & $24$ & $39$ & $63$ & $102$ \\
		\hline
		$5$ & $7$ & $20$ & $32$ & $52$ & $84$ & $136$ & $220$ \\
		\hline
		$6$ & $7$ & $12$ & $19$ & $31$ & $50$ & $81$ & $131$ \\
		\hline
		$7$ & $8$ & $27$ & $44$ & $71$ & $115$ & $186$ & $301$
	\end{tabular}
\end{center}
The following bounds will be important in simplifying the terms of the sum in \eqref{eq:asymp_expansion}.

\begin{lem} \label{lem:dual_bounds}
    For $i, n, k \in \bbN$ with $n-k > \mu_i$ it holds
    $$
    F_{n-k-2} \leq W_{i, n-k}^* < F_{n-k}.
    $$
\end{lem}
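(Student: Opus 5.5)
The plan is to exploit the fact that, for fixed $i$, the dual entries form a Fibonacci-type sequence in the index $j \coloneqq n-k$. By the second line of \eqref{eq:dual}, $W^*_{i,j} = \frac1{\sqrt5}\bigl(w_+^*(i)\phi^j - w_-^*(i)(-1/\phi)^j\bigr)$ depends only on $i$ and $j$. It satisfies $W^*_{i,j+1} = W^*_{i,j} + W^*_{i,j-1}$, and by the first line of \eqref{eq:dual} it is an integer. The bounds $F_{j-2} \le \cdot < F_j$ are also stable under the Fibonacci recurrence: if they hold at $j-1$ and $j$, adding them gives
$$
F_{j-3}+F_{j-2} \le W^*_{i,j-1}+W^*_{i,j} < F_{j-1}+F_j,
$$
that is, $F_{j-1} \le W^*_{i,j+1} < F_{j+1}$. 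So it suffices to verify the claim for the two base indices $j = \mu_i+1$ and $j = \mu_i+2$, and then induct on $j$.

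For the base cases I would first try the analytic route. Proposition \ref{prop:mu_formula} and its proof give $\phi^{\mu_i} < 2w_+(i) < \phi^{\mu_i+1}$, and \eqref{ineq:w_minus} gives $\phi^{-2} < -w_-(i) < 1$. Since $w_+^* = -w_-$ and $w_-^* = -w_+$, we have
$$
W^*_{i,j} = \tfrac1{\sqrt5}\bigl((-w_-(i))\phi^j + w_+(i)(-1/\phi)^j\bigr).
$$
For $j \ge \mu_i+1$ the second summand has absolute value less than $\tfrac1{2\sqrt5}\phi^{\mu_i+1-j} \le \tfrac1{2\sqrt5}$. The first summand lies strictly between $\phi^{j-2}/\sqrt5$ and $\phi^j/\sqrt5$. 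Comparing with Binet's formula \eqref{eq:binet}, this places $W^*_{i,j}$ within an error smaller than $1$ of the interval $[F_{j-2}, F_j]$. Integrality of $W^*_{i,j}$ then gives $F_{j-2} \le W^*_{i,j} \le F_j$, up to checking the small parity-dependent corrections $(-1/\phi)^j/\sqrt5$ in Binet's formula.

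The main obstacle is excluding equality $W^*_{i,j} = F_j$ in the upper bound. When $\{\phi i\}$ is tiny, $-w_-(i) = 1-\phi^{-1}\{\phi i\}$ is very close to $1$, and the crude estimate cannot separate $W^*_{i,j}$ from $F_j$. If that happens, I would switch to a purely integer argument at the base indices. Take $k=1$ and $n=\mu_i+2$ (so $j=\mu_i+1$), and $k=2$ and $n = \mu_i+3$ (so $j = \mu_i+1$ again), or $k=1$ and $n=\mu_i+3$ (so $j=\mu_i+2$). Write
$$
W^*_{i,j} = (-1)^k\bigl(F_{n-1}W_{i,k} - F_nW_{i,k-1}\bigr),
$$
with $W_{i,0} = W_{i,2}-W_{i,1}$. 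Then plug in the sharp integer inequalities $\tfrac12F_{\mu_i+1} \le W_{i,1} < \tfrac12F_{\mu_i+2}$ from \eqref{ineq:k=1} and $\tfrac12F_{\mu_i+2} < W_{i,2} < \tfrac12F_{\mu_i+3}$ from \eqref{ineq:k=2}. Cassini-type identities $F_{m+1}F_{m-1}-F_m^2 = (-1)^m$ should make the comparison with $F_j$ exact. The strictness in \eqref{ineq:k=2}, together with Lemma \ref{lem:half_fib} for the boundary case $W_{i,1} = \tfrac12F_{\mu_i+1}$, should then rule out $W^*_{i,j} = F_j$.

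A cleaner alternative is to note that $F_j - W^*_{i,j}$ and $W^*_{i,j} - F_{j-2}$ are themselves integer Fibonacci-type sequences in $j$. Positivity, respectively nonnegativity, of two consecutive terms then propagates, so everything again reduces to the same two base checks.
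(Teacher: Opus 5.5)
Your reduction is sound as far as it goes. $W^*_{i,j}$ is an integer satisfying the Fibonacci recurrence in $j$, and the two-sided bound propagates under that recurrence. Your analytic estimate, from $\phi^{\mu_i}<2w_+(i)<\phi^{\mu_i+1}$ and \eqref{ineq:w_minus}, really does give $F_{j-2}\le W^*_{i,j}\le F_j$ for every $j\ge\mu_i+1$. For odd $j$ it even gives strictness for free: then $w_+(i)(-1/\phi)^j<0$, so $W^*_{i,j}<\phi^j/\sqrt5=F_j-\phi^{-j}/\sqrt5$.

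The whole content of the lemma is therefore the strict bound $W^*_{i,j}<F_j$ for even $j$, and there your proposal has no working argument. The fallback you sketch cannot work. Inserting \eqref{ineq:k=1} and \eqref{ineq:k=2} into $W^*_{i,\mu_i+1}=F_{\mu_i+2}W_{i,2}-F_{\mu_i+3}W_{i,1}$ only yields
\[
-\tfrac12F_{\mu_i+1}F_{\mu_i+2}<W^*_{i,\mu_i+1}<\tfrac12F_{\mu_i}F_{\mu_i+3}.
\]
That is an uncertainty of order $F_{\mu_i}^2$, whereas you must decide between the integers $F_j-1$ and $F_j$; the value $F_j-W^*_{i,j}=1$ does occur, e.g.\ $i=2$, $j=6$, where $W^*_{2,6}=7$ and $F_6=8$. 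The two intervals are independent and lose the correlation $W_{i,2}=\lfloor\phi W_{i,1}+\phi^{-1}\rfloor$, which is what governs the sign of $F_j-W^*_{i,j}$. Cassini's identity, the strictness in \eqref{ineq:k=2} and Lemma \ref{lem:half_fib} supply nothing here.

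What is missing is a genuinely arithmetic input. The paper gets it from Lemma \ref{lem:floor_ineq}, namely the Diophantine bound $\{\phi i\}>1/(\sqrt5 i)$. Together with $2w_+(i)<\phi^{\mu_i+1}$, this dominates the $(-1/\phi)^j$ term, and that is what the inequalities \eqref{ineq:two_side_dual} encode.

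Within your own framework a coprimality argument would also do. The first line of \eqref{eq:dual} with $k=2$, $n=j+2$ gives
\[
F_j-W^*_{i,j}=F_{j+2}(W_{i,1}+1)-F_{j+1}(W_{i,2}+1).
\]
Since $\gcd(F_{j+1},F_{j+2})=1$, equality $W^*_{i,j}=F_j$ would force $F_{j+1}\mid W_{i,1}+1$. This is impossible, because $2\le W_{i,1}+1<\tfrac12F_{\mu_i+2}+1\le F_{j+1}$ for $j\ge\mu_i+1$. Combined with your non-strict bound, this proves the lemma for all $j>\mu_i$ directly, without any induction.
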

\begin{proof}
    From the above table the assertion is easily verified for $i=1$ and $i=2$. Assume from now on that $i \geq 3$ and write $n-k = \mu_i+j$ for $j \in \bbN$. From \eqref{eq:W_fib} and \eqref{eq:extension} it holds $W_{i, \mu_i+j}^* = F_{\mu_i+j-1} \lfloor \phi i\rfloor - F_{\mu_i+j} (i-1)$, so that the desired inequality follows from
    \begin{align} \label{ineq:two_side_dual}
        F_{\mu_i+j} i < F_{\mu_i+j-1} (\lfloor \phi i \rfloor + 1) \quad \text{and} \quad F_{\mu_i+j-1} \lfloor \phi i \rfloor < F_{\mu_i+j} i.
    \end{align}
    We start by showing the right inequality.
    
    It is straight forward to verify that the inequality $1 + 5\phi^{2}i^2 < 4 (\phi^2 i-1)^2$ holds for all $i \geq 3$ (even for $i=2$). Using
    $$
    w_+(i) = \phi \lfloor\phi i\rfloor +i-1 > \phi i + i -1 = \phi^2i-1
    $$
    and $2 w_+(i) < \phi^{\mu_i+1}$ from Proposition \ref{prop:mu_formula} we then get for all $j \in \bbN$
    $$
    1+5\phi^2 i^2 < \phi^{2\mu_i+2} \leq \phi^{2\mu_i+2j},
    $$
    in particular
    $$
    (1+5\phi^2 i^2) \left(-\frac1\phi\right)^{\mu_i+j} < \phi^{\mu_i+j}.
    $$
    Using \eqref{eq:binet} and rearranging, this is equivalent to the inequality
    $$
    F_{\mu_i+j-1} < F_{\mu_i+j} \left(\frac1\phi + \frac1{\phi (\phi+2) i^2}\right).
    $$
    Applying the first inequality of Lemma \ref{lem:floor_ineq} gives the right inequality of \eqref{ineq:two_side_dual}.

    The left inequality can be shown in a similar manner. This time, for $i\geq3$ we start with
    $$
    2\sqrt5 \phi^3 i^2 - 1 < 4 (\phi^2i-1)^2 < 4 w_+(i)^2 < \phi^{2\mu_i+2} \leq \phi^{2\mu_i+2j},
    $$
    so that
    $$
    \left(1 - 2\sqrt5\phi^3 i^2\right) \left(-\frac1\phi\right)^{\mu_i+j} < \phi^{\mu_i+j},
    $$
    or by \eqref{eq:binet} and rearranging equivalently
    $$
    F_{\mu_i+j} \left(\frac1\phi - \frac1{2\phi^3 i^2}\right) < F_{\mu_i+j-1}.
    $$
    Now using the second inequality of Lemma \ref{lem:floor_ineq} we get the left inequality of \eqref{ineq:two_side_dual}. This finishes the proof.
\end{proof}

In particular we obtain
\begin{align} \label{ineq:in_range}
    0 < W_{i, n-k}^* < F_n
\end{align}
whenever $n-k > \mu_i$.

\section{Asymptotics} \label{sec:asymp}

In this section we present the proof of Theorem \ref{thm:asymp} and a closed-form expression of the constant $C$ in terms of a certain Dedekind zeta function.

\subsection{Approximations}

Note that, using \eqref{eq:dual}, \eqref{ineq:in_range} as well as the periodicity and symmetry of $f$, we have
\begin{align} \label{eq:introduce_dual}
    \frac{f(F_{n-1} W_{i, k}/F_n)}{\left|{\sin(\pi F_{n-1} W_{i, k}/F_n)}\right|^\sigma} = \frac{f(W^*_{i, n-k}/F_n)}{{\sin(\pi W^*_{i, n-k}/F_n)}^\sigma}.
\end{align}
Using the asymptotics of $W_{i, k}$ and $W_{i, n-k}^*$ we approximate
$$
\frac1{F_n^\sigma} \frac{f(W_{i, k}/F_n)}{{\sin(\pi W_{i, k}/F_n)}^\sigma} \frac{f(W^*_{i, n-k}/F_n)}{{\sin(\pi W^*_{i, n-k}/F_n)}^\sigma} \approx \frac{f(0) }{\pi^\sigma W_{i, k}^\sigma} \frac{f(w_+^*(i) \phi^{-k})}{{\sin(\pi w_+^*(i) \phi^{-k})}^\sigma}
$$
for small $k$ and
\begin{align*}
    \frac1{F_n^\sigma} \frac{f(W_{i, k}/F_n)}{{\sin(\pi W_{i, k}/F_n)}^\sigma} \frac{f(W^*_{i, n-k}/F_n)}{{\sin(\pi W^*_{i, n-k}/F_n)}^\sigma}
    \approx \frac{f(w_+(i) \phi^{-(n-k)})}{{\sin(\pi w_+(i) \phi^{-(n-k)})}^\sigma} \frac{f(0)}{ \pi^\sigma (W_{i, n-k}^*)^\sigma}
\end{align*}
for large $k$. We will bound the error in these approximations.

\begin{lem} \label{lem:approx_quality}
    Let $\sigma > 1$ and $f:[0, 1) \rightarrow \bbR$ be a $1$-periodic, $\alpha$-Hölder continuous function for some $0 < \alpha \leq 1$. For $i \in \bbN$ let $\mu_i$ be as in Theorem \ref{thm:const_shift} and let $n, k \in \bbN$ be such that $k < n - \mu_i$. There are constants $A = A(\sigma, f, \alpha), A^*=A^*(\sigma, f, \alpha) > 0$ such that
    \begin{align*}
        \left|\frac1{F_n^\sigma} \frac{f(W_{i, k}/F_n)}{{\sin(\pi W_{i, k}/F_n)}^\sigma} \frac{f(W^*_{i, n-k}/F_n)}{{\sin(\pi W^*_{i, n-k}/F_n)}^\sigma} - \frac{f(0) }{\pi^\sigma W_{i, k}^\sigma} \frac{f(w_+^*(i) \phi^{-k})}{{\sin(\pi w_+^*(i) \phi^{-k})}^\sigma}\right| \leq \frac{A}{\phi^{\alpha (n-k)} i^{\sigma - \alpha}}
    \end{align*}
    and
    \begin{align*}
        & \left|\frac1{F_n^\sigma} \frac{f(W_{i, k}/F_n)}{{\sin(\pi W_{i, k}/F_n)}^\sigma} \frac{f(W^*_{i, n-k}/F_n)}{{\sin(\pi W^*_{i, n-k}/F_n)}^\sigma}
        - \frac{f(w_+(i) \phi^{-(n-k)})}{{\sin(\pi w_+(i) \phi^{-(n-k)})}^\sigma} \frac{f(0)}{ \pi^\sigma (W_{i, n-k}^*)^\sigma}\right| \\
        \leq ~ & \frac{A^*}{\phi^{\alpha k} i^{\sigma-\alpha}}.
    \end{align*}
\end{lem}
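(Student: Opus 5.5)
Write $x = W_{i,k}/F_n$, $y = W^*_{i,n-k}/F_n$, and define $g(t) = f(t)/\sin(\pi t)^\sigma$ on $(0,1)$. The quantity to approximate is $F_n^{-\sigma} g(x) g(y)$. The approximating quantity for the first bound is $f(0)\pi^{-\sigma}W_{i,k}^{-\sigma}\, g(w_+^*(i)\phi^{-k})$.

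\textbf{Basic size estimates.} From Lemma \ref{lem:dual_bounds} and Theorem \ref{thm:const_shift}, both $x$ and $y$ lie in $(0,1/2]$ up to the symmetry of $f$, and $\sin(\pi t)\ge 2t$ there. From \eqref{eq:binet}, \eqref{eq:dual} and Theorem \ref{thm:wythoff_basics} (i),
$$
x = \frac{w_+(i)\phi^{k-n}}{1-(-\phi^{-2})^n}+O(\phi^{-k-n}), \qquad y = w_+^*(i)\phi^{-k} + O(w_+(i)\phi^{-n-k}),
$$
using $w_+^*(i)=-w_-(i)\in(\phi^{-2},1)$ from \eqref{ineq:w_minus}. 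In particular:
\begin{itemize}
\item $y$ is comparable to $\phi^{-k}$ and $x$ is comparable to $i\phi^{k-n}$, by \eqref{lin_growth};
\item hence $W_{i,k}$ is comparable to $i\phi^k$.
\end{itemize}

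\textbf{Splitting the difference.} Telescope as
$$
F_n^{-\sigma}g(x)g(y) - \tfrac{f(0)}{\pi^\sigma W_{i,k}^\sigma}\, g(w_+^*\phi^{-k}) = \Big[F_n^{-\sigma}g(x)-\tfrac{f(0)}{\pi^\sigma W_{i,k}^\sigma}\Big]g(y) + \tfrac{f(0)}{\pi^\sigma W_{i,k}^\sigma}\big[g(y)-g(w_+^*\phi^{-k})\big].
$$

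\textbf{First bracket.} Write $F_n^{-\sigma}g(x) = W_{i,k}^{-\sigma}\, f(x)\,(x/\sin\pi x)^\sigma$. Then:
\begin{itemize}
\item $|f(x)-f(0)|\le L_f x^\alpha$;
\item $(x/\sin \pi x)^\sigma = \pi^{-\sigma}(1+O(x^2))$, with $O(x^2)\le O(x^\alpha)$ since $x\le 1/2$.
\end{itemize}
So this bracket is $O(x^\alpha W_{i,k}^{-\sigma})$. Since $|g(y)|\lesssim y^{-\sigma}\asymp \phi^{k\sigma}$, the product is
$$
O\big(x^\alpha\,\phi^{k\sigma}(i\phi^k)^{-\sigma}\big) = O\big(i^{\alpha}\phi^{\alpha(k-n)}\, i^{-\sigma}\big),
$$
which is the desired $A\phi^{-\alpha(n-k)}i^{\alpha-\sigma}$.

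\textbf{Second bracket.} Both $y$ and $y_0 = w_+^*\phi^{-k}$ are comparable to $\phi^{-k}$, and $|y-y_0|\lesssim i\phi^{-n-k}$. On the relevant range, $g$ is $\alpha$-Hölder in $f$ and smooth in the $\sin^{-\sigma}$ factor. This gives
$$
|g(y)-g(y_0)| \lesssim L_f|y-y_0|^\alpha y_0^{-\sigma} + \|f\|_\infty y_0^{-\sigma-1}|y-y_0|,
$$
which is at most $\phi^{k\sigma}\big((i\phi^{-n-k})^\alpha + i\phi^{-n}\big)$. Multiplying by $W_{i,k}^{-\sigma}\asymp (i\phi^k)^{-\sigma}$ yields
$$
i^{-\sigma}\big(i^\alpha\phi^{-\alpha(n+k)} + i\phi^{-n}\big).
$$
The first term is fine. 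The second term, $i^{1-\sigma}\phi^{-n}$, is also fine: from $i\phi^{k-n}\lesssim x\le 1/2$ we get $i\phi^{-n}\lesssim \phi^{-k}$, hence
$$
i\phi^{-n} = (i\phi^{-n})^{\alpha}(i\phi^{-n})^{1-\alpha}\lesssim i^\alpha\phi^{-\alpha n}.
$$

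\textbf{Second inequality.} This follows by the symmetric argument with the roles of $x$ and $y$ exchanged. Now $y\asymp \phi^{-k}$ is the small variable that gets replaced by $0$, giving $|f(y)-f(0)|\le L_f\phi^{-\alpha k}$. The variable $x$ is compared to $w_+(i)\phi^{-(n-k)}$, and $W^*_{i,n-k}\asymp\phi^{n-k}$. The factor $g(x)$ is bounded by $(i\phi^{k-n})^{-\sigma}$, and the product with $(W^*)^{-\sigma}$ gives $i^{-\sigma}$. An extra $i^\alpha$ appears in the Hölder error of $x$, giving $A^*\phi^{-\alpha k}i^{\alpha-\sigma}$.

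\textbf{Main obstacle.} The main difficulty is bookkeeping the uniformity in $i$, i.e. making sure every constant depends only on $(\sigma,f,\alpha)$. The key inputs are \eqref{lin_growth}, \eqref{ineq:w_minus}, Lemma \ref{lem:dual_bounds} (which keeps $y$ and $W^*$ comparable to their leading terms uniformly) and the restriction $x\le 1/2$ from Theorem \ref{thm:const_shift}. The near-edge cases, where $x$ is near $1/2$, must be handled using the symmetry of $f$ and $\sin\pi x\ge 2x$. This is also why the hypothesis $k<n-\mu_i$ is needed.
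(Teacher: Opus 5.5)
Your two-bracket telescoping is the paper's decomposition regrouped: your first bracket is the paper's $\mathrm{II}+\mathrm{III}$ and your second is $\mathrm{I}+\mathrm{IV}$. The first bracket and your sketch of the second inequality are sound.

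\textbf{The error: the expansion of $y$.} The problem is the expansion of $y=W^*_{i,n-k}/F_n$. By \eqref{eq:dual}, the subleading term of $W^*_{i,n-k}$ is $\frac1{\sqrt5}(-1)^{n-k}w_+(i)\phi^{-(n-k)}$. Dividing by $F_n$ gives
\[
y-w_+^*(i)\phi^{-k}=\frac{(-1)^{n-k}w_+(i)\phi^{k-2n}-(-1)^n w_-(i)\phi^{-k-2n}}{1-(-1)^n\phi^{-2n}}=O\bigl(i\,\phi^{k-2n}\bigr),
\]
not $O(w_+(i)\phi^{-n-k})$. The two differ by the factor $\phi^{2k-n}$, so your bound is false for $k>n/2$. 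Concretely, take $i=1$ and $k=n-3$. Then $y=W^*_{1,3}/F_n=1/F_n$ and $y_0=\phi^{2-n}$, so $|y-y_0|\sim\phi^{-n-2}$, which is not $O(\phi^{-2n})$. Your displayed estimate for $|g(y)-g(y_0)|$, and the absorption of $i\phi^{-n}$ that follows it, therefore rest on a wrong input.

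\textbf{The repair.} The fix is easy and reproduces the paper's bounds on $\mathrm{I}$ and $\mathrm{IV}$.
\begin{itemize}
\item With the correct error, the H\"older part contributes $(i\phi^k)^{-\sigma}\phi^{k\sigma}(i\phi^{k-2n})^\alpha=i^{\alpha-\sigma}\phi^{-\alpha(2n-k)}\le i^{\alpha-\sigma}\phi^{-\alpha(n-k)}$.
\item The Lipschitz part contributes $(i\phi^k)^{-\sigma}\phi^{k(\sigma+1)}\,i\phi^{k-2n}=i^{1-\sigma}\phi^{-2(n-k)}$. Absorb it using $i\lesssim\phi^{n-k}$, which follows from $\phi^{n-k}>2w_+(i)\ge2\phi i$:
\[
i^{1-\sigma}\phi^{-2(n-k)}=i^{\alpha-\sigma}\phi^{-\alpha(n-k)}\bigl(i\phi^{-(n-k)}\bigr)^{1-\alpha}\phi^{-(n-k)}\lesssim i^{\alpha-\sigma}\phi^{-\alpha(n-k)}.
\]
\item The comparability $y\asymp\phi^{-k}$ still holds, for instance by Lemma~\ref{lem:dual_bounds}.
\end{itemize}

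\textbf{Drop the appeal to symmetry of $f$.} Symmetry is not a hypothesis of this lemma. Moreover, $y$ and $y_0$ can exceed $1/2$: for $i=2$, $k=1$, $n=7$ one gets $y=7/13$. Reflecting one of $y,y_0$ but not the other would also destroy their comparison. Symmetry is not needed anyway. You have $y\le F_{n-1}/F_n\le2/3$ and $y_0<\phi^{-1}$. On $(0,2/3]$ one has $\sin(\pi t)\ge t$ and $|\frac{d}{dt}\sin(\pi t)^{-\sigma}|\le\sigma\pi t^{-\sigma-1}$, and that is all your argument uses. This is exactly why the paper states Lemma~\ref{lem:calc_ineqs} on $(0,2/3]$.
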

\begin{proof}
    We first make some preparation. Every time we write $x \lesssim y$ or $x \gtrsim y$ we mean the corresponding inequality up to a constant independent of $n, k, i$ (but possibly depending on $\sigma, f, \alpha$). The inequality
    \begin{align} \label{ineq:quot_bound}
        0 < -\frac{w_-(i)}{w_+(i)} \leq \phi^{-2},
    \end{align}
    easily derived from \eqref{ineq:wp_wm_expressions}, will be used multiple times. From Proposition \ref{prop:mu_formula} we have $n-k > \mu_i = \lfloor \log_\phi(2 w_+(i))\rfloor$ and thus even
    \begin{align} \label{ineq:n_k_i}
        \phi^{n-k} > 2 w_+(i) \gtrsim i.
    \end{align}
    Furthermore, from \eqref{mu_i} we note that $n > k + \mu_i \geq 1+2$, so that we may assume $n \geq 4$. By Lemma \ref{lem:dual_bounds} in particular
    $$
    \frac{W_{i, n-k}^*}{F_n} < \frac{F_{n-k}}{F_n} \leq \frac{F_{n-1}}{F_n} \leq \frac23,
    $$
    similar to
    $$
    \frac{W_{i, k}}{F_n} < \frac12
    $$
    by the defining property of $\mu_i$. We thus may (and will) apply the inequalities of Lemma \ref{lem:calc_ineqs} for $x, y \in \{W_{i, k}/F_n, W_{i, n-k}^*/F_n\}$.
    
    We will frequently use that $f$ is bounded as a periodic and continuous function, in particular $|f(t)| \leq \|f\|_\infty \lesssim 1$ for all $t \in [0, 1)$ with the supremum norm.

    We will show the first inequality, the second can be dealt with in an analogous way. First and foremost, by the triangle inequality we can write
    \begin{align*}
        & \left|\frac{f(W_{i, k}/F_n)}{F_n^\sigma {\sin(\pi W_{i, k}/F_n)}^\sigma} \frac{f(W^*_{i, n-k}/F_n)}{{\sin(\pi W^*_{i, n-k}/F_n)}^\sigma} - \frac{f(0) }{\pi^\sigma W_{i, k}^\sigma} \frac{f(w_+^*(i) \phi^{-k})}{{\sin(\pi w_+^*(i) \phi^{-k})}^\sigma}\right| \\
        \leq ~ & |\text{I}| + |\text{II}| + |\text{III}| + |\text{IV}|,
    \end{align*}
    where the symbols $|\text{I}|, |\text{II}|, |\text{III}|, |\text{IV}|$ denote the expressions
    \begin{align*}
        |\text{I}| & = \left| \frac{f(W_{i, k}/F_n)}{F_n^\sigma{\sin(\pi W_{i, k}/F_n)}^\sigma} \frac{f(W^*_{i, n-k}/F_n)}{{\sin(\pi W^*_{i, n-k}/F_n)}^\sigma} - \frac{f(W_{i, k}/F_n)}{F_n^\sigma{\sin(\pi W_{i, k}/F_n)}^\sigma} \frac{f(W^*_{i, n-k}/F_n)}{\sin(\pi w_+^*(i) \phi^{-k})^\sigma} \right|, \\
        |\text{II}| & = \left| \frac{f(W_{i, k}/F_n)}{F_n^\sigma{\sin(\pi W_{i, k}/F_n)}^\sigma} \frac{f(W^*_{i, n-k}/F_n)}{\sin(\pi w_+^*(i) \phi^{-k})^\sigma} - \frac{f(W_{i, k}/F_n)}{\pi^\sigma W_{i, k}^\sigma} \frac{f(W^*_{i, n-k}/F_n)}{\sin(\pi w_+^*(i) \phi^{-k})^\sigma} \right|, \\
        |\text{III}| & = \left| \frac{f(W_{i, k}/F_n)}{\pi^\sigma W_{i, k}^\sigma} \frac{f(W^*_{i, n-k}/F_n)}{\sin(\pi w_+^*(i) \phi^{-k})^\sigma} - \frac{f(0)}{\pi^\sigma W_{i, k}^\sigma} \frac{f(W^*_{i, n-k}/F_n)}{\sin(\pi w_+^*(i) \phi^{-k})^\sigma} \right|, \\
        |\text{IV}| & = \left| \frac{f(0)}{\pi^\sigma W_{i, k}^\sigma} \frac{f(W^*_{i, n-k}/F_n)}{\sin(\pi w_+^*(i) \phi^{-k})^\sigma} - \frac{f(0)}{\pi^\sigma W_{i, k}^\sigma} \frac{f(w_+^*(i) \phi^{-k})}{\sin(\pi w_+^*(i) \phi^{-k})^\sigma} \right|.
    \end{align*}
    These four terms will be bounded individually.
    
    For the first term we have by Lemma \ref{lem:calc_ineqs} (i)
    \begin{align*}
        |\text{I}| & \lesssim \frac{1}{W_{i, k}^\sigma} \left|\frac{1}{\sin(\pi W_{i, n-k}^*/F_n)^\sigma} - \frac{1}{\sin(\pi w_+^*(i) \phi^{-k})^\sigma}\right|.
    \end{align*}
    Noting that by \eqref{lin_growth} and \eqref{ineq:quot_bound}
    $$
    \frac1{W_{i, k}} = \frac1{w_+(i) \phi^k} \frac{\sqrt5}{1 - \frac{w_-(i)}{w_+(i)} \frac{(-1)^k}{\phi^{2k}}} \lesssim \frac{1}{\phi^k i} \frac{\sqrt5}{1 - \phi^{-2} \cdot \frac1{\phi^2}} \lesssim \frac{1}{\phi^k i},
    $$
    by \eqref{ineq:w_minus} and \eqref{ineq:n_k_i}
    \begin{align*}
        \frac{F_n}{W_{i, n-k}^*} & = \frac{\phi^n}{\phi^{n-k}} \frac{1-(-1)^n \phi^{-2n}}{w_+^*(i)-w_-^*(i) \frac{(-1)^{n-k}}{\phi^{2(n-k)}} } = \phi^k \frac{1-(-1)^n \phi^{-2n}}{-w_-(i)+\frac{w_+(i)}{\phi^{n-k}} \frac{(-1)^{n-k}}{\phi^{n-k}} } \leq \phi^k\frac{1+\phi^{-10}}{\frac1{\phi^2}-\frac12 \frac1{\phi^3}} \\
        & \lesssim \phi^k.
    \end{align*}
    and by Lemma \ref{lem:calc_ineqs} (ii), \eqref{ineq:w_minus}, \eqref{ineq:quot_bound} and \eqref{ineq:n_k_i}
    \begin{align*}
        & \left|\frac{1}{\sin\left(\pi \frac{W_{i, n-k}^*}{F_n}\right)^\sigma} - \frac{1}{\sin\left(\pi \frac{w_+^*(i)}{\phi^{k}}\right)^\sigma}\right| \leq \left(\frac{F_n}{W_{i, n-k}^*} + \frac{\phi^k}{w_+^*(i)}\right)^{\sigma-1} \left|\frac{F_n}{W_{i, n-k}^*} - \frac{\phi^k}{w_+^*(i)}\right| \\
        \lesssim ~ & \phi^{(\sigma-1)k} \left|\frac{\phi^n}{w_+^*(i) \phi^{n-k}} \frac{1-(-1)^n\phi^{-2n}}{1-\frac{w_-^*(i)}{w_+^*(i)} (-1)^{n-k} \phi^{-2(n-k)}} - \frac{\phi^k}{w_+^*(i)}\right| \\
        \lesssim ~ & \phi^{(\sigma-1)k} \cdot \phi^k \left|\frac{\frac{w_-^*(i)}{w_+^*(i)} (-1)^{n-k} \phi^{-2(n-k)}-(-1)^n \phi^{-2n}}{1-\frac{w_-^*(i)}{w_+^*(i)} (-1)^{n-k} \phi^{-2(n-k)}}\right| \\
        \lesssim ~ & w_+(i) \phi^{(2+\sigma) k - 2n} \left|\frac{1-\frac{w_-(i)}{w_+(i)} (-1)^k \phi^{-2k}}{-w_-(i)+\frac{w_+(i)}{\phi^{n-k}} \frac{(-1)^{n-k}}{\phi^{n-k}}}\right| \lesssim \frac{1+\phi^{-2} \cdot \phi^{-2}}{\phi^{-2} - \frac12 \frac1{\phi^3}} \frac{i}{\phi^{2n-(\sigma+2)k}} \lesssim \frac{i}{\phi^{2n-(\sigma+2)k}},
    \end{align*}
    we thus get once more by \eqref{ineq:n_k_i} the desired
    $$
    |\text{I}| \lesssim \left(\frac{1}{\phi^k i}\right)^\sigma \frac{i}{\phi^{2n-(\sigma+2)k}} = \frac1{\phi^{2(n-k)} i^{\sigma-1}} \lesssim \textcolor{black}{\frac1{\phi^{n-k} i^\sigma}} \lesssim \frac1{\phi^{\alpha(n-k)} i^{\sigma-\alpha}}.
    $$
    We will abbreviate the argument a bit for the other terms. For the second term we have, using Lemma \ref{lem:calc_ineqs} (iii) and \eqref{ineq:n_k_i},
    \begin{align*}
        |\text{II}| & \lesssim \left|\frac{1}{F_n^\sigma \sin(\pi W_{i, k}/F_n)^\sigma}-\frac{1}{\pi^\sigma W_{i, k}^\sigma}\right| \phi^{\sigma k} = \frac{\phi^{\sigma k}}{\pi^\sigma W_{i, k}^\sigma} \left|\left(\frac{\pi W_{i, k}/F_n}{\sin(\pi W_{i, k}/F_n)}\right)^\sigma-1\right| \\
        & \lesssim \frac{\phi^{\sigma k}}{\phi^{\sigma k} i^\sigma} \left(\frac{W_{i, k}}{F_n}\right)^2 \lesssim \frac1{i^\sigma} \frac{i^2}{\phi^{2(n-k)}} = \frac{1}{\phi^{\alpha(n-k)} \phi^{(2-\alpha)(n-k)} i^{\sigma-2}} \lesssim \frac1{\phi^{\alpha (n-k)} i^{\sigma-\alpha}}.
    \end{align*}
    For the third term we have, by $\alpha$-Hölder continuity of $f$,
    \begin{align*}
        |\text{III}| & \lesssim \left(\frac1{\phi^k i}\right)^\sigma \phi^{\sigma k} \left|f\left(\frac{W_{i, k}}{F_n}\right)-f(0)\right| \lesssim \frac1{i^\sigma} \left(\frac{W_{i, k}}{F_n}\right)^\alpha \lesssim \frac1{i^\sigma} \left(\frac{i}{\phi^{n-k}}\right)^\alpha \\
        & = \frac1{\phi^{n-k} i^{\sigma-\alpha}} \lesssim \frac1{\phi^{\alpha(n-k)} i^{\sigma-\alpha}}.
    \end{align*}
    Lastly, for the fourth term we similarly have
    \begin{align*}
        |\text{IV}| & \lesssim \left(\frac1{\phi^k i}\right)^\sigma \phi^{\sigma k} \left|f\left(\frac{W_{i, n-k}^*}{F_n}\right) - f\left(\frac{w_+^*(i)}{\phi^k}\right)\right| \lesssim \frac1{i^\sigma} \left|\frac{W_{i, n-k}^*}{F_n} - \frac{w_+^*(i)}{\phi^k}\right|^\alpha \\
        & = \frac1{i^\sigma} \left|\frac{-w_-^*(i)}{\phi^{2n-k}} \frac{1 - \frac{w_+^*(i)}{w_-^*(i)} (-1)^{k} \phi^{-2k}}{1-(-1)^n \phi^{-2n}}\right|^\alpha \lesssim \frac1{i^\sigma} \left(\frac{i}{\phi^{2n-k}}\right)^\alpha = \frac1{\phi^{\alpha(2n-k)} i^{\sigma-\alpha}} \\
        & \lesssim \frac1{\phi^{\alpha(n-k)} i^{\sigma-\alpha}}.
    \end{align*}
    It now remains to combine $|\text{I}| + |\text{II}| + |\text{III}| + |\text{IV}|$ to get the desired bound.
\end{proof}

We will further approximate
\begin{align*}
    \frac{f(0)}{\pi^\sigma W_{i, k}^\sigma} \frac{f(w_+^*(i) \phi^{-k})}{{\sin(\pi w_+^*(i) \phi^{-k})}^\sigma}
    \approx \frac{f(0)^2 5^{\sigma/2}}{\pi^{2\sigma} \eta_i^\sigma}
    \approx \frac{f(w_+(i) \phi^{-(n-k)})}{{\sin(\pi w_+(i) \phi^{-(n-k)})}^\sigma} \frac{f(0)}{\pi^\sigma (W_{i, n-k}^*)^\sigma}.
\end{align*}
The next result shows that the errors in these approximations decay exponentially.

\begin{lem} \label{lem:exp_decay}
    Let $\sigma > 1$ and let $f:[0, 1) \rightarrow \bbR$ be $1$-periodic and $\alpha$-Hölder continuous for some $0<\alpha\leq 1$. Let $i \in \bbN$. There is a constant $B = B(\sigma, f, \alpha) > 0$ such that for all $k \in \bbN$ it holds
    $$
    \left|\frac{f(0)}{\pi^\sigma W_{i, k}^\sigma} \frac{f(w_+^*(i) \phi^{-k})}{{\sin(\pi w_+^*(i) \phi^{-k})}^\sigma} - \frac{f(0)^2 5^{\sigma/2}}{\pi^{2\sigma} \eta_i^\sigma}\right| \leq \frac{B}{\phi^{\alpha k} i^\sigma}.
    $$
    Also, there is a constant $B^* = B^*(\sigma, f, \alpha) > 0$ such that for all $j \in \bbN$ it holds
    $$
    \left|\frac{f(w_+(i) \phi^{-(\mu_i+j)})}{{\sin(\pi w_+(i) \phi^{-(\mu_i+j)})}^\sigma} \frac{f(0)}{\pi^\sigma (W_{i, \mu_i+j}^*)^\sigma} - \frac{f(0)^2 5^{\sigma/2}}{\pi^{2\sigma} \eta_i^\sigma}\right| \leq \frac{B^*}{\phi^{\alpha (\mu_i+j)} i^{\sigma-\alpha}}.
    $$
\end{lem}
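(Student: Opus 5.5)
We first record the algebraic identity behind the limit value. By Theorem \ref{thm:wythoff_basics} (i), $W_{i,k} = \frac{1}{\sqrt5}w_+(i)\phi^k\bigl(1 - \frac{w_-(i)}{w_+(i)}(-1)^k\phi^{-2k}\bigr)$. Also $w_+^*(i) = -w_-(i)$, so $\frac{w_+(i)}{\sqrt5}\cdot w_+^*(i) = \frac{\eta_i}{\sqrt5}$. Writing $x_k \coloneqq w_+^*(i)\phi^{-k} = -w_-(i)\phi^{-k} \in (0,1)$ by \eqref{ineq:w_minus}, the product we must control is
$$
\frac{f(0)}{\pi^\sigma W_{i,k}^\sigma}\frac{f(x_k)}{\sin(\pi x_k)^\sigma}
= \frac{f(0)\,5^{\sigma/2}}{\pi^\sigma \eta_i^\sigma}\cdot \frac{(x_k)^\sigma}{\bigl(1 - \frac{w_-(i)}{w_+(i)}(-1)^k\phi^{-2k}\bigr)^\sigma}\cdot\frac{f(x_k)}{\sin(\pi x_k)^\sigma}.
$$
The target is $\frac{f(0)^2 5^{\sigma/2}}{\pi^{2\sigma}\eta_i^\sigma}$. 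We therefore split the difference, by the triangle inequality, into three pieces. The first comes from replacing $(1-\dots)^{-\sigma}$ by $1$. The second comes from replacing $(\pi x_k/\sin(\pi x_k))^\sigma$ by $1$. The third comes from replacing $f(x_k)$ by $f(0)$.

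Each piece carries the prefactor $\eta_i^{-\sigma}\lesssim i^{-\sigma}$ by \eqref{lin_growth}, and all other factors are bounded uniformly. Boundedness of the remaining factors follows from $x_k\le 1/\phi$ (so $\pi x_k/\sin(\pi x_k)$ is bounded), from \eqref{ineq:quot_bound}, and from boundedness of $f$. The three pieces are then estimated as follows.
\begin{itemize}
\item The first piece is $\lesssim \phi^{-2k}$ by \eqref{ineq:quot_bound} and the mean value theorem.
\item The second piece is $\lesssim x_k^2\lesssim\phi^{-2k}$ by Lemma \ref{lem:calc_ineqs} (iii), or by a Taylor expansion of $t/\sin t$.
\item The third piece is $\le L_f x_k^\alpha \le L_f\phi^{-\alpha k}$ by Hölder continuity, using $x_k<1$.
\end{itemize}
Since $\alpha\le 1<2$, all three pieces are $\lesssim \phi^{-\alpha k} i^{-\sigma}$, which gives the constant $B$.

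For the second inequality we proceed symmetrically, with $y_j \coloneqq w_+(i)\phi^{-(\mu_i+j)}$. By \eqref{eq:dual},
$$
W^*_{i,\mu_i+j} = \frac{1}{\sqrt5}w_+^*(i)\phi^{\mu_i+j}\Bigl(1-\frac{w_-^*(i)}{w_+^*(i)}(-1)^{\mu_i+j}\phi^{-2(\mu_i+j)}\Bigr),
$$
and again $w_+(i)w_+^*(i)=\eta_i$, so the same factorization holds. The difference from the first case is that $y_j$ is no longer bounded by an absolute constant times $\phi^{-k}$. Instead we have $y_j < \tfrac12$ by Proposition \ref{prop:mu_formula}, since $2w_+(i)<\phi^{\mu_i+1}$. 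This keeps $\pi y_j/\sin(\pi y_j)$ bounded. Moreover $y_j \lesssim i\,\phi^{-(\mu_i+j)}$ by \eqref{lin_growth}.

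The Hölder piece then gives $i^{-\sigma}\,(i\phi^{-(\mu_i+j)})^\alpha = \phi^{-\alpha(\mu_i+j)}i^{\alpha-\sigma}$, which is the stated rate. The sine piece gives $i^{-\sigma}y_j^2$. Since $y_j<1$ we have $y_j^2\le y_j^\alpha$, so it is bounded by the same quantity.

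The step requiring care is the correction factor $\bigl(1-\frac{w_-^*}{w_+^*}(\cdots)\bigr)^{-\sigma}$. Here the ratio $\frac{w_-^*(i)}{w_+^*(i)} = \frac{w_+(i)}{w_-(i)}$ is large, of order $i$, rather than small. We handle this by rewriting the deviation term as
$$
\frac{w_+(i)}{-w_-(i)}\,\phi^{-2(\mu_i+j)} \lesssim i\,\phi^{-(\mu_i+j)}\cdot\phi^{-(\mu_i+j)} \le \tfrac12\phi^{-(\mu_i+j)}\cdot(\text{const}),
$$
using \eqref{ineq:w_minus} and $\phi^{\mu_i+j}>2w_+(i)\gtrsim i$ from \eqref{ineq:n_k_i}. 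This shows the bracket lies in a fixed compact subinterval of $(0,\infty)$; the subinterval must be checked explicitly, as was done in the proof of Lemma \ref{lem:approx_quality}. It also shows that its deviation from $1$ is $\lesssim i\phi^{-2(\mu_i+j)} \lesssim \phi^{-(\mu_i+j)} \le \phi^{-\alpha(\mu_i+j)}$. Collecting the three pieces yields $B^*$.
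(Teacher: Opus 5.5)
Your proposal is correct and follows the paper's proof: you use the same three-way telescoping, with the correction factor $(1-\frac{w_-(i)}{w_+(i)}(-1)^k\phi^{-2k})^{-\sigma}$ handled by the mean value theorem (or Lemma \ref{lem:calc_ineqs} (iv)), the ratio $\pi x/\sin(\pi x)$ by Lemma \ref{lem:calc_ineqs} (iii), and $f(x_k)-f(0)$ by H\"older continuity; only the order of the telescoping differs. For the second inequality, which the paper only calls analogous, your handling of the large ratio $w_-^*(i)/w_+^*(i)$ is right, and the explicit check you defer does go through: by \eqref{ineq:w_minus}, $2w_+(i)<\phi^{\mu_i+1}$ and $\mu_i\ge 2$, the bracket deviates from $1$ by less than $\frac12\phi^{3-\mu_i-2j}\le\frac12\phi^{-1}$.
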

\begin{proof}
    The proof is similar to the one of the previous lemma and we will again only discuss the first expression in detail. Bound
    $$
    \left|\frac{f(0)}{\pi^\sigma W_{i, k}^\sigma} \frac{f(w_+^*(i) \phi^{-k})}{{\sin(\pi w_+^*(i) \phi^{-k})}^\sigma} - \frac{f(0)^2 5^{\sigma/2}}{\pi^{2\sigma} \eta_i^\sigma}\right| \leq |\text{I}|+|\text{II}|+|\text{III}|,
    $$
    where now
    \begin{align*}
        |\text{I}| & = \left|\frac{f(0)}{\pi^\sigma W_{i, k}^\sigma} \frac{f(w_+^*(i) \phi^{-k})}{{\sin(\pi w_+^*(i) \phi^{-k})}^\sigma} - \frac{f(0) 5^{\sigma/2}}{\pi^\sigma w_+(i)^\sigma \phi^{\sigma k}} \frac{f(w_+^*(i) \phi^{-k})}{{\sin(\pi w_+^*(i) \phi^{-k})}^\sigma}\right|, \\
        |\text{II}| & = \left|\frac{f(0) 5^{\sigma/2}}{\pi^\sigma w_+(i)^\sigma \phi^{\sigma k}} \frac{f(w_+^*(i) \phi^{-k})}{{\sin(\pi w_+^*(i) \phi^{-k})}^\sigma} - \frac{f(0) 5^{\sigma/2}}{\pi^\sigma w_+(i)^\sigma \phi^{\sigma k}} \frac{f(0)}{{\sin(\pi w_+^*(i) \phi^{-k})}^\sigma}\right|, \\
        |\text{III}| & = \left|\frac{f(0) 5^{\sigma/2}}{\pi^\sigma w_+(i)^\sigma \phi^{\sigma k}} \frac{f(0)}{{\sin(\pi w_+^*(i) \phi^{-k})}^\sigma} - \frac{f(0) 5^{\sigma/2}}{\pi^\sigma w_+(i)^\sigma \phi^{\sigma k}} \frac{f(0)}{\pi^\sigma w_+^*(i)^\sigma \phi^{-\sigma k}}\right|,
    \end{align*}
    recalling that $w_+(i) w_+^*(i) = \eta_i$. These terms can be treated individually. For the first one we have, using Lemma \ref{lem:calc_ineqs} (iv),
    \begin{align*}
        |\text{I}| & \lesssim \frac1{\phi^{-\sigma k}} \left|\frac{1}{W_{i, k}^\sigma} - \frac{5^{\sigma/2}}{w_+(i)^\sigma \phi^{\sigma k}}\right| \lesssim \phi^{\sigma k} \left|\left(\frac1{w_+(i) \phi^k - w_-(i) \left(-\frac1\phi\right)^k}\right)^\sigma - \frac1{w_+(i)^\sigma \phi^{\sigma k}}\right| \\
        & \lesssim \frac1{i^\sigma} \left|\left(\frac{1}{1- \frac{w_-(i)}{w_+(i)} (-1)^k \phi^{-2k}}\right)^\sigma - 1\right| \lesssim \frac1{i^\sigma} \left|\frac{w_-(i)}{w_+(i)} \frac1{\phi^{2k}}\right| \lesssim \frac1{\phi^{2k} i^{\sigma+1}} \lesssim \frac1{\phi^{\alpha k} i^\sigma}.
    \end{align*}
    For the second term, by $\alpha$-Hölder continuity of $f$ it holds
    \begin{align*}
        |\text{II}| & \lesssim \frac1{w_+(i)^\sigma \phi^{\sigma k}} \frac1{\phi^{-\sigma k}} \left|f\left(\frac{w_+^*(i)}{\phi^k}\right) - f(0)\right| \lesssim \frac1{i^\sigma} \left(\frac{w_+^*(i)}{\phi^k}\right)^\alpha \lesssim \frac1{\phi^{\alpha k} i^\sigma}.
    \end{align*}
    Finally, for the third term we have by Lemma \ref{lem:calc_ineqs} (iii)
    \begin{align*}
        |\text{III}| & \lesssim \frac1{w_+(i)^\sigma \phi^{\sigma k}} \left|\frac1{\sin(\pi w_+^*(i)\phi^{-k})^\sigma} - \frac1{\pi^\sigma w_+^*(i)^\sigma \phi^{-\sigma k}}\right| \\
        & \lesssim \frac1{i^\sigma} \left|\left(\frac{\pi w_+^*(i)\phi^{-k}}{\sin(\pi w_+^*(i)\phi^{-k})}\right)^\sigma-1\right| \lesssim \frac1{i^\sigma} \frac1{\phi^{2k}} \lesssim \frac1{\phi^{\alpha k} i^\sigma}.
    \end{align*}
    The terms $|\text{I}|+|\text{II}|+|\text{III}|$ together then yield the claim.
\end{proof}

The previous lemma shows that the sums
$$
\delta_{f, \sigma}(i) \coloneqq \sum_{k=1}^\infty \left(\frac{f(0)}{\pi^\sigma W_{i, k}^\sigma} \frac{f(w_+^*(i) \phi^{-k})}{{\sin(\pi w_+^*(i) \phi^{-k})}^\sigma} - \frac{f(0)^2 5^{\sigma/2}}{\pi^{2\sigma} \eta_i^\sigma}\right) \lesssim \frac1{i^\sigma}
$$
and
$$
\delta_{f, \sigma}^*(i) \coloneqq \sum_{j=1}^\infty \left(\frac{f(w_+(i) \phi^{-(\mu_i+j)})}{{\sin(\pi w_+(i) \phi^{-(\mu_i+j)})}^\sigma} \frac{f(0)}{\pi^\sigma (W_{i, \mu_i+j}^*)^\sigma} - \frac{f(0)^2 5^{\sigma/2}}{\pi^{2\sigma} \eta_i^\sigma}\right) \lesssim \frac1{i^{\sigma-\alpha}}
$$
converge with geometric convergence rate. In particular, truncating the sum after the first $n$ terms, compared to the infinite sum this leaves an error of order $O(\phi^{-\alpha n} i^{-\sigma})$ and $O(\phi^{-\alpha n} i^{-(\sigma-\alpha)})$ respectively.

\subsection{Proof of Theorem \ref{thm:asymp}}

We are now in a position to prove our main result.

\begin{proof}[Proof of Theorem \ref{thm:asymp}]
    Any $\alpha$-Hölder continuous, symmetric, $1$-periodic function $f: [0, 1) \rightarrow \bbR$ is also $\beta$-Hölder continuous for all $0 < \beta \leq \alpha$. This, in particular, holds then for $\beta \coloneq \min\{\alpha, \sigma -1\}$. Also set $\gamma \coloneqq \beta/2$ so that in total $0 < \gamma < \beta \leq \alpha$. We will apply the previous two lemmas with the exponent $\beta$ (and at one instance with $\gamma$).

    By symmetry of $f$ we may write
    \begin{align*}
        \Sigma & \coloneqq \frac1{F_n^\sigma} \sum_{m = 1}^{F_n-1} \frac{f(m/F_n)}{\left|{\sin(\pi m/F_n)}\right|^\sigma} \frac{f(F_{n-1}m/F_n)}{\left|{\sin(\pi F_{n-1}m/F_n)}\right|^\sigma} \\
        & = \frac2{F_n^\sigma} \sum_{1 \leq m < F_n/2} \frac{f(m/F_n)}{\left|{\sin(\pi m/F_n)}\right|^\sigma} \frac{f(F_{n-1}m/F_n)}{\left|{\sin(\pi F_{n-1}m/F_n)}\right|^\sigma} + O(\phi^{-\sigma n}),
    \end{align*}
    where the error is $0$ if $F_n$ is odd and (by periodicity of $f$)
    $$
    \frac1{F_n^\sigma} \left(\frac{f(1/2)}{\left|{\sin(\pi/2)}\right|^\sigma}\right)^2 = O(\phi^{-\sigma n})
    $$
    if $F_n$ is even. Partition now the index set $\{m \in \bbN: m < F_n/2\}$ according to the Wythoff array, also using \eqref{eq:introduce_dual},
    $$
    \Sigma = 2\sum_{\substack{i \in \bbN: \\ n - \mu_i > 1}} \sum_{k=1}^{n-\mu_i-1} \frac1{F_n^\sigma} \frac{f(W_{i, k}/F_n)}{\sin(\pi W_{i, k}/F_n)^\sigma} \frac{f(W_{i, n-k}^*/F_n)}{\sin(\pi W_{i, n-k}^*/F_n)^\sigma} + O(\phi^{-\sigma n}).
    $$
    Splitting up the inner sum into $1 \leq k < \frac n2$ and (the possibly vacuous) $\frac n2 \leq k < n-\mu_i$ and applying Lemmas \ref{lem:approx_quality} and \ref{lem:exp_decay} we get
    \begin{align*}
        \Sigma = ~& 2\sum_{\substack{i \in \bbN: \\ n - \mu_i > 1}} \Bigg[ \sum_{1 \leq k < \frac n2} \left(\frac{f(0)}{\pi^\sigma W_{i, k}^\sigma} \frac{f(w_+^*(i) \phi^{-k})}{\sin(\pi w_+^*(i) \phi^{-k})^\sigma} + O\left(\frac1{\phi^{\beta n/2} i^{\sigma-\beta}}\right)\right) \\
        & ~~~~~~~~~~~ + \sum_{\frac n2 \leq k < n-\mu_i} \left(\frac{f(w_+(i) \phi^{-(n-k)})}{{\sin(\pi w_+(i) \phi^{-(n-k)})}^\sigma} \frac{f(0)}{ \pi^\sigma (W_{i, n-k}^*)^\sigma} + O\left(\frac1{\phi^{\beta n/2} i^{\sigma-\beta}}\right)\right) \Bigg] \\
        & + O(\phi^{-\sigma n}) \\
        = ~& 2\sum_{\substack{i \in \bbN \\ n-\mu_i > 1}} \Bigg[ (n-\mu_i-1) \frac{f(0)^2 5^{\sigma/2}}{\pi^{2\sigma} \eta_i^\sigma} \\
        & ~~~~~~~~~~~ + \sum_{1 \leq k < \frac n2} \left(\frac{f(0)}{\pi^\sigma W_{i, k}^\sigma} \frac{f(w_+^*(i) \phi^{-k})}{{\sin(\pi w_+^*(i) \phi^{-k})}^\sigma} - \frac{f(0)^2 5^{\sigma/2}}{\pi^{2\sigma} \eta_i^\sigma}\right) \\
        & ~~~~~~~~~~~ + \sum_{1 \leq j < \frac n2 - \mu_i} \left(\frac{f(w_+(i) \phi^{-(\mu_i+j)})}{{\sin(\pi w_+(i) \phi^{-(\mu_i+j)})}^\sigma} \frac{f(0)}{\pi^\sigma (W_{i, \mu_i+j}^*)^\sigma} - \frac{f(0)^2 5^{\sigma/2}}{\pi^{2\sigma} \eta_i^\sigma}\right) \\
        & ~~~~~~~~~~~ + O\left(\frac{n}{\phi^{\beta n/2} i^{\sigma-\beta}}\right) \Bigg] + O(\phi^{-\sigma n}) \\
        = ~& 2\sum_{\substack{i \in \bbN \\ n-\mu_i > 1}} \Bigg[ (n-\mu_i-1) \frac{f(0)^2 5^{\sigma/2}}{\pi^{2\sigma} \eta_i^\sigma} + \left(\delta_{f, \sigma}(i) + O\left(\frac1{\phi^{\beta n/2} i^\sigma}\right)\right) \\
        & ~~~~~~~~~~~ + \left(\delta_{f, \sigma}^*(i) + O\left(\frac1{\phi^{\beta n/2} i^{\sigma-\beta}}\right)\right) + O\left(\frac{n}{\phi^{\beta n/2} i^{\sigma-\beta}}\right) \Bigg] + O(\phi^{-\sigma n}) \\
        = ~& \frac{f(0)^2 5^{\sigma/2}}{\pi^{2\sigma}} \left(\sum_{\substack{i \in \bbN \\ n-\mu_i > 1}} \frac1{\eta_i^\sigma}\right) n + \sum_{\substack{i \in \bbN \\ n-\mu_i > 1}} \left(\delta_{f, \sigma}(i) + \delta_{f, \sigma}^*(i) - \frac{f(0)^2 5^{\sigma/2}}{\pi^{2\sigma}} \frac{\mu_i+1}{\eta_i^\sigma}\right) \\
        & + \sum_{\substack{i \in \bbN \\ n-\mu_i > 1}}O\left(\frac{n}{\phi^{\beta n/2} i^{\sigma-\beta}}\right) + O(\phi^{-\sigma n}).
    \end{align*}
    It remains to look at the errors we introduce when replacing the finite sums by infinite ones. Using Proposition \ref{prop:mu_formula} we have the equivalence
    $$
    n - \mu_i > 1 \quad \Leftrightarrow \quad \phi^n > 2\phi w_+(i)
    $$
    and by \eqref{lin_growth} thus the implications
    $$
    i < \frac{\phi^n}{12} \quad \Rightarrow \quad n-\mu_i > 1 \quad \Rightarrow \quad i < \frac{\phi^n}{5}.
    $$
    Since $\eta_i \asymp i$, going from the finite sum
    $$
    \sum_{\substack{i \in \bbN \\ n-\mu_i > 1}} \frac1{\eta_i^\sigma} \geq \sum_{i = 1}^{\lfloor\phi^n/12 \rfloor} \frac1{\eta_i^\sigma}
    $$
    to the infinite sum
    $$
    \sum_{i = 1}^\infty \frac1{\eta_i^\sigma}
    $$
    introduces an error of order $O\left(\phi^{-(\sigma-1)n}\right)$. Since similarly $\delta_{f, \sigma}(i) \lesssim i^{-\sigma}$, the same error gets introduced in the sum over this quantity. Since $\mu_i \asymp \log i$, for the sum over $(\mu_i+1)/\eta_i^\sigma$ we get an error of order $O\left(n/\phi^{(\sigma-1)n}\right)$. As for the sum over $\delta^*_{f, \sigma}(i)$, where instead of the exponent $\beta$ we will use $\gamma$, we have
    $$
    \delta^*_{f, \sigma}(i) \lesssim \frac1{i^{\sigma-\gamma}} = \frac1{i^{\sigma - \beta + \gamma}} \leq \frac1{i^{1+\gamma}}
    $$
    and thus we get an error of the order
    $$
    \sum_{\substack{i \in \bbN \\ n-\mu_i \leq 1}} \delta^*_{f, \sigma}(i) \lesssim \sum_{\substack{i \in \bbN \\ i \geq \phi^n/5}} \frac1{i^{1+\gamma}} \lesssim \phi^{-\gamma n} = \phi^{-\beta n/2}.
    $$
    Lastly, since $\sigma-\beta \geq 1$ we have
    $$
    \sum_{\substack{i \in \bbN \\ n-\mu_i > 1}} \frac1{i^{\sigma-\beta}} \leq \sum_{\substack{i \in \bbN \\ n-\mu_i > 1}} \frac1{i} \lesssim \log(\phi^n) \lesssim n.
    $$
    Collecting all errors together we arrive at
    \begin{align*}
        & O\left(\frac{n}{\phi^{(\sigma-1)n}}\right) + O\left(\frac{1}{\phi^{(\sigma-1)n}}\right) + O\left(\frac{1}{\phi^{\beta n/2}}\right) + O\left(\frac{n}{\phi^{(\sigma-1)n}}\right) \\
        & + O\left(\frac{n^2}{\phi^{\beta n/2}}\right) + O\left(\frac{1}{\phi^{\sigma n}}\right) \\
        = ~ & O\left(\frac{n^2}{\phi^{\beta n/2}}\right).
    \end{align*}
    All together we obtain
    \begin{align*}
        \Sigma = ~ & 2\frac{f(0)^2 5^{\sigma/2}}{\pi^{2\sigma}} \left(\sum_{i=1}^\infty \frac1{\eta_i^\sigma}\right) n + 2\sum_{i=1}^\infty \left(\delta_{f, \sigma}(i) + \delta_{f, \sigma}^*(i) - \frac{f(0)^2 5^{\sigma/2}}{\pi^{2\sigma}} \frac{\mu_i+1}{\eta_i^\sigma}\right) \\
        & + O\left(\frac{n^2}{\phi^{\beta n/2}}\right),
    \end{align*}
    so that the constants $C$ and $D$ are given by
    \begin{align} \label{C}
        C = 2\frac{f(0)^2 5^{\sigma/2}}{\pi^{2\sigma}} \sum_{i=1}^\infty \frac1{\eta_i^\sigma}
    \end{align}
    and
    \begin{align} \label{D}
        D = 2\sum_{i=1}^\infty \left(\delta_{f, \sigma}(i) + \delta_{f, \sigma}^*(i) - \frac{f(0)^2 5^{\sigma/2}}{\pi^{2\sigma}} \frac{\mu_i+1}{\eta_i^\sigma}\right),
    \end{align}
    finishing the proof.
\end{proof}

Using \eqref{C} and \eqref{D} we can determine the constants $C$ and $D$ to arbitrary precision. For example, for the sum in \eqref{eq:wce_expression} with $\sigma = 2, 4, 6$ we get
\begin{align*}
    & \frac1{F_n^2} \sum_{m=1}^{F_n-1} \frac1{\sin(\pi m / F_n)^2 \sin(\pi F_{n-1} m/F_n)^2} \\
    & \quad\quad = 0.119256958\dots n - 0.075555555\ldots + O\left(\frac{n^2}{\phi^{n/2}}\right), \\
    & \frac1{F_n^4} \sum_{m=1}^{F_n-1} \frac{2+4\cos(\pi m/F_n)^2}{\sin(\pi m/F_n)^4} \frac{2+4\cos(\pi F_{n-1}m/F_n)^2}{\sin(\pi F_{n-1}m/F_n)^4} \\
    & \quad\quad = 0.190811134\dots n + 0.174222222\ldots+O\left(\frac{n^2}{\phi^{n/2}}\right), \\
    & \frac1{F_n^6} \sum_{m=1}^{F_n-1} \frac{16+88\cos(\pi m/F_n)^2+16\cos(\pi m/F_n)^4}{\sin(\pi m/F_n)^6} \\
    & \phantom{\frac1{F_n^6}} ~~~~~~~ \times \frac{16+88\cos(\pi F_{n-1}m/F_n)^2+16\cos(\pi F_{n-1}m/F_n)^4}{\sin(\pi F_{n-1}m/F_n)^6} \\
    & \quad\quad = 3.896181633\dots n+0.369674379\ldots+O\left(\frac{n^2}{\phi^{n/2}}\right).
\end{align*}
In the remaining parts of this paper we discuss how one can determine closed-form expressions for these constants. The last section will also show that the error term can probably be improved for such sums to $O(n/\phi^{2n})$ under suitable assumptions on $f$.

\subsection{The number field $\bbQ(\sqrt5)$ and its Dedekind zeta function}


Number theory, in a simplified way, is concerned with the arithmetic properties of domains of numbers, in particular the structure of its prime elements. While classically this was restricted to the integers $\bbZ$, it was realized that similar problems may be studied over, for example, the \emph{Gaussian integers} $\bbZ[i]$, the \emph{Eisenstein integers} $\bbZ[\exp(2\pi\iu/3)]$ or more generally the \emph{cyclotomic integers} $\bbZ[\exp(2\pi\iu/n)]$. 
The development of a general theory for the \emph{ring of integers} $\cO_K$ of an \emph{algebraic number field} $K \spse \bbQ$ then lead to \emph{algebraic number theory} and \emph{class field theory}. For details on these topics see \cite{Bec14, Chi09, Coh93, Cox89, Mar18, Nar04, Neu99, Was97}. Here we only need to look at the concrete example of the properties of $\bbZ[\phi] \sbse \bbQ(\phi) = \bbQ(\sqrt5)$.

An important tool in the study of the primes in $\bbZ \sbse \bbQ$ is the Riemann zeta function $\zeta(\sigma)$ as already appeared in \eqref{eq:dft_sigma}. The \emph{Dedekind zeta function} $\zeta_K(\sigma)$ is an analogous object for general number fields $K \spse \bbQ$, where $\zeta = \zeta_\bbQ$. We will be interested in $\zeta_{\bbQ(\sqrt5)}$, so we start with collecting some properties of $\bbQ(\sqrt5)$. For general information on Dedekind zeta functions see \cite{Mar18, Nar04, Zag81}.

The field $\bbQ(\sqrt5) \cong \bbQ(x)/(x^2-5)$ is a degree-two extension of the rationals $\bbQ$ and thus an algebraic number field. Its ring of integers is given by $\cO_{\bbQ(\sqrt5)} = \bbZ[\phi]$ and we will give a brief exposition of its properties. Elements in this ring are given by $a+b\phi$ for $a, b \in \bbZ$ and multiplied via
$$
(a+b\phi)(c+d\phi) = (ac+bd) + (ad+bc+bd)\phi.
$$
This ring is a \emph{Euclidean domain} (with Euclidean function $|\nu(x)|$ as below), in particular a \emph{principal ideal domain} and thus also a \emph{unique factorization domain}. The map
$$
\nu: \bbZ[\phi] \ra \bbZ, \nu(a+b\phi) \coloneqq a^2 + ab - b^2
$$
is called the \emph{norm}. It is multiplicative $\nu(xy) = \nu(x) \nu(y)$ and if $(x) \sbse \bbZ[\phi]$ denotes the ideal generated by $x \neq 0$ then for the quotient ring it holds
\begin{align} \label{eq:quot_card}
    \#(\bbZ[\phi]/(x)) = |\nu(x)|.
\end{align}
Via the factorization $\nu(a+b\phi) = (a+b\phi)(a+b-b\phi)$, the element $(a+b)-b\phi$ may be regarded as the conjugate of $a+b\phi$ in $\bbZ[\phi]$. The \emph{units} of $\bbZ[\phi]$ are precisely those elements $u \in \bbZ[\phi]$ for which $\nu(u) \in \{-1, 1\}$ and can be given explicitly by
\begin{align} \label{eq:units}
    \nu^{-1}(1) = \{\pm \phi^n: n \in \bbZ \text{ even}\}, \quad \nu^{-1}(-1) = \{\pm \phi^n: n \in \bbZ \text{ odd}\},
\end{align}
noting that $\phi^n = F_{n-1} + F_{n}\phi$. This makes $\phi$ a \emph{fundamental unit} of $\bbZ[\phi]$. The prime elements of $\bbZ[\phi]$ can be characterized (up to multiplication by units) as follows:
\begin{itemize}
    \item [(i)] $\sqrt5 = -1+2\phi \in \bbZ[\phi]$ is prime,

    \item [(ii)] if $p \in \bbN$ is prime and $p \equiv 2, 3 \mod 5$ then $p \in \bbZ[\phi]$ is prime,

    \item [(iii)] if $p \in \bbN$ is prime and $p \equiv 1, 4 \mod 5$ then there are $a, b \in \bbZ$ with $p=a^2+ab-b^2$ and $a+b\phi,(a+b)-b\phi \in \bbZ[\phi]$ are distinct primes.
\end{itemize}
That $\sqrt5 \in \bbZ[\phi]$ needs to be treated separately is reflected by the fact that $(5) \sbse \bbZ$ is the only prime ideal which ramifies as $(5) = (\sqrt5)^2 \sbse \bbZ[\phi]$.

Points (ii) and (iii) can be seen as an analogue of Fermat's theorem on the sum of two squares for the binary quadratic form $x^2+xy-y^2$ \cite{Bue89, Coh93, Cox89}. This shows that the structure of $\bbZ[\phi]$ is intimately connected to the properties of this form. Indeed, the fact that $\bbZ[\phi]$ is a unique factorization domain (in terms of class field theory the field $\bbQ(\sqrt5)$ has \emph{class number} $h(\bbQ(\sqrt5))=1$) is equivalent to the fact that every binary quadratic form $q(x, y) = ax^2+bxy+cy^2$ with $a, b, c \in \bbZ$ of \emph{discriminant} $b^2-4ac = 5$ is equivalent to $x^2+xy-y^2$ \cite{Coh93}, meaning that
$$
q(\alpha x + \beta y, \gamma x+\beta y) = x^2+xy-y^2
$$
for some $\alpha, \beta, \gamma, \delta \in \bbZ$ with $ \alpha\delta-\beta\gamma=1$.

The \emph{Dedekind zeta function} of $\bbQ(\sqrt5)$ is defined as
$$
\zeta_{\bbQ(\sqrt5)}(\sigma) \coloneqq \sum_{0 \neq I \sbse \bbZ[\phi]} \frac1{\#(\bbZ[\phi] / I)^\sigma}
$$
for $\sigma > 1$, where the sum goes over all non-zero ideals of $\bbZ[\phi]$. The convergence of this sum for $\sigma > 1$ is non-trivial but seen from Proposition \ref{prop:Dedekind_eta} below using $\eta_i \asymp i$. One often studies such functions by first extending them to all of $\bbC \sm \{1\}$ via analytic continuation, but we will not need this here. Using the above characterization of the prime elements of $\bbZ[\phi]$, $\zeta_{\bbQ(\sqrt5)}$ admits the factorization into the \emph{Euler product}
\begin{align} \label{eq:zeta_factorization}
    \begin{split}
        \zeta_{\bbQ(\sqrt5)}(\sigma) & = \prod_{P \sbse \bbZ[\phi]} \frac1{1-\#(\bbZ[\phi] / P)^{-\sigma}} \\
        & = \frac1{1-5^{-\sigma}} \prod_{\substack{p \equiv 1, 4 \mod 5}} \left(\frac1{1-p^{-\sigma}}\right)^2 \prod_{\substack{p \equiv 2, 3 \mod 5}} \frac1{1-p^{-2\sigma}} \\
        & = L(\sigma, \chi) \zeta(\sigma),
    \end{split}
\end{align}
where the first product goes through all prime ideals $P$ of $\bbZ[\phi]$, the products in the second line go through all primes in $\bbN$ and in the last line
$$
L(\sigma, \chi) \coloneqq \sum_{n=1}^\infty \frac{\chi(n)}{n^\sigma}
$$
is the \emph{Dirichlet L-function} for the \emph{character} 
$$
\chi(n) \coloneqq \begin{cases}
    0, & n \equiv 0 \mod 5, \\
    1, & n \equiv 1, 4 \mod 5, \\
    -1, & n \equiv 2, 3 \mod 5,
\end{cases}
$$
that is the \emph{Legendre symbol} $(\frac\cdot5)$ modulo $5$. With $\eta_i$ as in Theorem \ref{thm:wythoff_basics} (ii) the following result connects our previous considerations to $\zeta_{\bbQ(\sqrt5)}(\sigma)$.

\begin{prop} \label{prop:Dedekind_eta}
    For all $\sigma > 1$ it holds
    $$
    \zeta_{\bbQ(\sqrt5)}(\sigma) = \sum_{i=1}^\infty \frac1{\eta_i^\sigma}.
    $$
\end{prop}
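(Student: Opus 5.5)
Since $\bbZ[\phi]$ is a principal ideal domain, every non-zero ideal is principal, $I = (x)$, and by \eqref{eq:quot_card} we have $\#(\bbZ[\phi]/I) = |\nu(x)|$. The plan is therefore to show that $i \mapsto \eta_i$ is the composition of a bijection from $\bbN$ onto the set of non-zero ideals of $\bbZ[\phi]$ with the absolute norm. Concretely, I will send $i$ to the ideal generated by $x_i \coloneqq W_{i,2} + W_{i,1}\phi$ (or any consecutive pair $W_{i,k+1} + W_{i,k}\phi$). This is well-defined up to the choice of $k$, since multiplying by $\phi$ gives
$$
\phi(W_{i,k+1} + W_{i,k}\phi) = W_{i,k} + (W_{i,k+1}+W_{i,k})\phi = W_{i,k} + W_{i,k+2}\phi .
$$
Up to a harmless relabelling, this is exactly the shift $k \mapsto k+1$ of the recurrence \eqref{eq:recurrence}. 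To make it exact, I will identify a consecutive pair $(U_k, U_{k+1})$ with the element $U_{k+1}\phi + U_k$, and multiplication by $\phi$ then shifts the pair.

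The norm computation is direct. With $x = U_k + U_{k+1}\phi$ we get $\nu(x) = U_k^2 + U_kU_{k+1} - U_{k+1}^2$, which by Theorem \ref{thm:wythoff_basics} (ii) equals $\pm\eta_i$. Hence $|\nu(x_i)| = \eta_i$, and positivity is guaranteed by $\eta_i \in \bbN$.

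For surjectivity, take a non-zero ideal $(x)$. I will choose a generator of the form $a + b\phi$ with $a, b \in \bbN$. This is possible because multiplying by $\pm\phi^m$ for suitable $m$ makes both the element and its pair shifted by one positive: the sequence $U_m$ defined by $x\phi^m = U_{m-1} + U_m\phi$ satisfies the Fibonacci recurrence, and it grows like $c\phi^m$ with $c \neq 0$ up to sign. So for large $m$ and the right sign, consecutive terms are positive integers. Theorem \ref{thm:wythoff_basics} (v) then gives a unique row $i$ with $U_n = W_{i, n+\kappa}$ eventually, so $(x) = (x_i)$.

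For injectivity, suppose $(x_i) = (x_j)$. Then $x_j = u x_i$ with $u = \pm\phi^m$ a unit by \eqref{eq:units}, so the Fibonacci-recurrent sequences attached to $x_i$ and $x_j$ agree up to a shift and a sign. Both are eventually positive, so the sign must be $+$. The uniqueness part of Theorem \ref{thm:wythoff_basics} (v), or equivalently the partition in (iv), then forces $i = j$.

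Summing over ideals gives $\zeta_{\bbQ(\sqrt5)}(\sigma) = \sum_i \eta_i^{-\sigma}$, with absolute convergence for $\sigma > 1$ by \eqref{lin_growth}. The main obstacle I expect is the bookkeeping in the surjectivity step: checking that the sequence obtained from $x$ really has two consecutive positive natural-number terms, so that (v) applies, and handling the sign of the unit. The cleanest route is to use $x\phi^m \sim x\phi^m$ together with the Binet-type form from Theorem \ref{thm:wythoff_basics} (i). The conjugate contribution decays like $\phi^{-m}$, so the sign of $U_m$ eventually equals the sign of the leading coefficient, which can be fixed by multiplying by $-1$.
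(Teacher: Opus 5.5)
Your proof is correct and is essentially the paper's: the map $i \mapsto (W_{i,k}+W_{i,k+1}\phi)$ with $|\nu| = \eta_i$ from Theorem \ref{thm:wythoff_basics} (ii), surjectivity by normalizing a generator with a unit $\pm\phi^m$ and applying (v), and injectivity from the uniqueness in (v). You also justify the positivity normalization and the sign of the unit more carefully than the paper does. You should, however, delete the initial choice $x_i = W_{i,2}+W_{i,1}\phi$, which is not a harmless relabelling: for $i=1$ it gives $2+\phi = \phi\sqrt5$, of norm $5$ rather than $\eta_1 = 1$. The pairing $U_k + U_{k+1}\phi$ that you then adopt is the right one and is exactly the paper's.
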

\begin{proof}
    We will show that there is a bijection between the rows of the Wythoff array $(W_{i, k})_k$ and the non-zero ideals $I$ of $\bbZ[\phi]$ such that $\#(\bbZ[\phi]/I) = \eta_i$. This then establishes the claim.

    For $i \in \bbN$ consider the ideal $I_i \coloneqq (W_{i, 1} + W_{i, 2}\phi) \sbse \bbZ[\phi]$. Note that $W_{i, k} + W_{i, k+1} \phi = \phi^{k-1} (W_{i, 1} + W_{i, 2}\phi)$ differ only by a unit, so that even $I_i = (W_{i, k} + W_{i, k+1} \phi)$ for all $k \in \bbN$. By \eqref{eq:quot_card} and Theorem \ref{thm:wythoff_basics} (ii) we have $\#(\bbZ[\phi]/I_i) = \eta_i$. It remains to show that the map $i \mapsto I_i$ from $\bbN$ to the set of non-zero ideals of $\bbZ[\phi]$ is a bijection.
    
    To see that it is surjective, note that if $0 \neq I \sbse \bbZ[\phi]$ is a non-zero ideal, since $\bbZ[\phi]$ is a principal ideal domain we have $I = (a+b\phi)$ for some $a, b \in \bbZ, (a, b) \neq (0, 0)$. Multiplying this generator by a suitable unit
    $$
    \pm \phi^n (a+b\phi) = \pm (F_{n-1}a+F_nb) \pm (F_na+F_{n+1}b)\phi,
    $$
    we may assume $a, b > 0$. Consider now the sequence $(U_n)_n$ given by $U_1 = a, U_2 = b$ and $U_n = U_{n-1} + U_{n-2}$. By Theorem \ref{thm:wythoff_basics} (v) there are $i, n, k \in \bbN$ such that $(U_n, U_{n+1}) = (W_{i, k}, W_{i, k+1})$. But then
    $$
    I = (a+b\phi) = (U_1+U_{2}\phi) = (U_n+U_{n+1}\phi) = (W_{i, k}+W_{i, k+1}\phi) = I_i,
    $$
    showing surjectivity.

    For injectivity let $i, i' \in \bbN$ be such that $I_i = I_{i'}$. Then the respective generators $W_{i, 1}+W_{i, 2}\phi$ and $W_{i', 1}+W_{i', 2}\phi$ only differ in a unit $\pm \phi^{n}$, so that the sequences $(W_{i, k})_k$ and $(W_{i', k})_k$ must eventually coincide. However, by Theorem \ref{thm:wythoff_basics} (v), in particular the uniqueness of the index $i$, we then get $i=i'$, showing injectivity. This fully proves the claim.
\end{proof}

This shows that the constant $C$ as in \eqref{C} can be calculated via $\zeta_{\bbQ(\sqrt5)}(\sigma)$. For $\sigma = 2s, s \in \bbN$ an even integer, by \eqref{eq:zeta_factorization} we have (with the Bernoulli numbers $B_{2s}$ and Bernoulli polynomials $B_{2s}(t)$)
$$
\zeta_{\bbQ(\sqrt5)}(2s) = \frac{(B_{2s}(1/5) - B_{2s}(2/5)) B_{2s}}{2 (2s)!^2 \sqrt5} (2\pi)^{4s},
$$
see \cite{Neu99, Was97}. Thus, for the constant $C$ in Theorem \ref{thm:asymp} we generally have
\begin{align} \label{eq:C_exact}
    C = 2 f(0)^2 \frac{5^{\sigma / 2}}{\pi^{2\sigma}} \zeta_{\bbQ(\sqrt5)}(\sigma),
\end{align}
which for $\sigma = 2s, s \in \bbN$ reads
$$
C = f(0)^2 \frac{80^s (B_{2s}(1/5) - B_{2s}(2/5)) B_{2s}}{(2s)!^2 \sqrt5} \sim 2f(0)^2 \left(\frac5{\pi^4}\right)^s.
$$
For even integers and $f(0) = 1$ some values are collected in the following tables:
\begin{center}
	\begin{tabular}{r||c|c|c|c|c|c}
		$\sigma$ & $2$ & $4$ & $6$ & $8$ & $10$ & $12$ \\
		\hline
		$C$ & $\frac{4}{15\sqrt5}$ & $\frac{8}{675\sqrt5}$ & $\frac{1072}{1771875\sqrt5}$ & $\frac{5776}{186046875\sqrt5}$ & $\frac{6604016}{4144194140625\sqrt5}$ & $\frac{25449165152}{311125375107421875\sqrt5}$
	\end{tabular}
\end{center}
\begin{center}
	\begin{tabular}{r||c|c|c}
		$\sigma$ & $14$ & $16$ & $18$ \\
		\hline
		$C$ & $\frac{36389877952}{8667064020849609375\sqrt5}$ & $\frac{1750445666277664}{8122122370538690185546875\sqrt5}$ & $\frac{9141810707034331408}{826385340590459032928466796875\sqrt5}$
	\end{tabular}
\end{center}
For $\sigma = 2s = 2$ we recover the value from \cite{Bor17, HKP21}. While the expression \eqref{D} for the constant $D$ is significantly more complicated, the next section will show that for special (yet still relevant) cases a simple closed-form expression may be achievable.

\section{Closed-form expressions}

It is natural to ask if one can determine even more lower-order terms beyond the constant one. With a more detailed analysis and possibly stronger assumptions on the functions $f$ this seems doable, but tedious without a more systematic approach. In this last part we want to discuss a different and to the authors surprising aspect, that one can even determine simple closed-form expressions for certain sums of this type. With the \emph{Lucas numbers} $(L_n)_n$ given by $L_0=2, L_1=1$ and the recurrence $L_n=L_{n-1}+L_{n-2}$ for $n \geq 2$, we exhibit the following.

\begin{thm} \label{thm:closed_form}
    For all $n \geq 2$ it holds
    $$
    \sum_{m=1}^{F_n-1} \frac1{\sin(\pi m/F_n)^2} \frac1{\sin(\pi F_{n-1}m/F_n)^2} = \frac{4n}{75} F_{2n} - \frac{17}{1125}L_{2n} - (-1)^n \frac{116}{1125} - \frac19.
    $$
\end{thm}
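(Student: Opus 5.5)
The plan is to treat the left-hand side
$$
S(h, N) \coloneqq \sum_{m=1}^{N-1} \frac{1}{\sin(\pi m/N)^2 \sin(\pi h m/N)^2}
$$
as a generalized Dedekind sum. I would prove a reciprocity law relating $S(h, N)$ to sums with modulus $h$. For $(h, N) = (F_{n-1}, F_n)$ the Euclidean algorithm only takes Fibonacci steps, since $F_n \equiv F_{n-2} \bmod F_{n-1}$. The reciprocity law should therefore turn into a linear recurrence in $n$ with explicit inhomogeneity, and the closed form can then be checked by induction from small cases ($n = 2, 3, 4$ by direct computation).

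\emph{Step 1 (residue computation).} Fix coprime $h, N$ and integrate
$$
\Phi(z) = \cot(\pi z)\,\csc^2(\pi z/N)\,\csc^2(\pi h z/N)
$$
over the boundary of a period strip $0 \le \operatorname{Re} z \le N$, truncated at height $\pm T$ with $T \to \infty$. Because $\Phi$ is $N$-periodic, the vertical sides cancel. On the horizontal sides $\Phi$ decays exponentially, since $\csc^2$ does while $\cot \to \mp\iu$, so those contributions vanish as $T \to \infty$. Hence the sum of all residues in one period is zero. The poles fall into three classes.
\begin{itemize}
\item[(a)] At $z = m \not\equiv 0 \bmod N$ the poles are simple, and their residues sum to $S(h, N)/\pi$.
\item[(b)] At $z \equiv 0$ there is a pole of order $5$. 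Its residue is an explicit rational function of $h, N$, obtained from Laurent expansions of $\cot$ and $\csc^2$.
\item[(c)] At $z = Nj/h$ with $h \nmid j$ there are double poles. Their residues involve
$$
\sum_{j} \cot(\pi N j/h)\csc^2(\pi j/h) \quad \text{and} \quad \sum_{j} \csc^2(\pi N j/h)\csc^2(\pi j/h),
$$
with $j$ running over $j \bmod h$, $j \not\equiv 0$. This produces $S(N, h) = S(N \bmod h, h)$ together with a mixed sum of type $\cot\cdot\cot\cdot\csc^2$.
\end{itemize}
To close the system, I would also apply the same argument to the companion functions
$$
\cot(\pi z)\cot(\pi z/N)\cot(\pi h z/N)\csc^2(\pi h z/N) \quad \text{and} \quad \cot(\pi z)\cot(\pi h z/N)\cot(\pi z/N)\csc^2(\pi z/N).
$$
The goal is a finite vector of sums $\mathbf v(h, N)$, containing $S$ and the mixed $\cot$--$\csc^2$ sums, that satisfies
$$
\mathbf v(h, N) = M\,\mathbf v(N \bmod h, h) + \mathbf r(h, N),
$$
with a constant matrix $M$ and a polynomial-in-$h, N, 1/h, 1/N$ correction $\mathbf r$.

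\emph{Step 2 (specialise to Fibonacci).} Insert $(h, N) = (F_{n-1}, F_n)$, so that $(N \bmod h, h) = (F_{n-2}, F_{n-1})$. Simplify the rational terms in $\mathbf r$ using Cassini's identity $F_{n-1}^2 - F_nF_{n-2} = (-1)^n$, and express the result via $F_{2n}$, $L_{2n}$ and $(-1)^n$. Two further symmetries should cut down the mixed sums. First, $F_{n-1}^2 \equiv (-1)^n \bmod F_n$, so substituting $m \mapsto F_{n-1}m$ swaps the two factors. Second, $\cot$ is odd while $\csc^2$ is even. With luck these reduce the vector to two unknowns, which gives a scalar recurrence of the form
$$
S_n = S_{n-1} + (\text{explicit combination of } F_{2n}, L_{2n}, (-1)^n).
$$

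\emph{Step 3 (induction).} Substitute the claimed right-hand side and verify the recurrence using $F_{2n} - F_{2n-2} = F_{2n-1}$ and the analogous Lucas identities. The factor $n$ in $\tfrac{4n}{75}F_{2n}$ arises from the repeated $\phi^{2n}$ forcing term, which is consistent with the main term $Cn$ in Theorem~\ref{thm:asymp}, where $C = \tfrac{4}{15\sqrt5}$ for $\sigma = 2$. Finally, check the base cases numerically.

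The main obstacle is Step 1(c) together with closing the system. The double poles produce mixed cotangent sums, and it is not clear a priori that the family closes with a constant transfer matrix. If the $\cot$-type mixed sums do not cancel, my fallback is to pass through Proposition~\ref{prop:dft_sum} and \eqref{eq:Bernoulli_dft} with $s = 1$. This rewrites $S$ as a lattice sum
$$
\sum_k \bar B_2(k/N)\,\bar B_2(\{hk/N\}),
$$
for which Carlitz-type reciprocity for products of periodic Bernoulli functions is available, and then to run the same Fibonacci recursion on that formulation.
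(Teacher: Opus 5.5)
Your residue framework is sound and is a genuinely different route from the paper's. However, the recursion you anticipate in Steps 2--3 has the wrong shape, and the part that carries the difficulty is missing. At $z=Nj/h$ the residue is $(\tfrac{N}{\pi h})^2G'(Nj/h)$ with $G(z)=\cot(\pi z)\csc^2(\pi z/N)$. Writing $C(a,c)\coloneqq\sum_{j=1}^{c-1}\cot(\pi aj/c)\cot(\pi j/c)\csc^2(\pi j/c)$, your kernel therefore gives
\[
S(h,N)=\frac{N^2}{h^2}\,S(N,h)+\frac{2N}{h^2}\,C(N,h)+\frac{N^4}{45h^2}+\frac{N^2}{9}\Bigl(1+\frac1{h^2}\Bigr)-\frac{h^2}{15}-\frac19-\frac1{15h^2}.
\]
This is not a constant transfer. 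As a check, for $h=1$ it reduces to $\sum_m\csc^4(\pi m/N)=(N^2-1)(N^2+11)/45$, and it reproduces $S(2,3)=32/9$ and $S(3,5)=64/5$.

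No relation $S_n=S_{n-1}+(\text{combination of }F_{2n},L_{2n},(-1)^n)$ can hold, because telescoping it never produces $nF_{2n}$. The factor $n$ actually enters through the term $\frac{2F_n}{F_{n-1}^2}C_{n-1}$. Here $C_n\coloneqq C(F_{n-1},F_n)=-\frac83F_n^3\,\frs_{1,3}(1,F_{n-1};F_n)=-\frac{2L_{3n}}{1125}-(-1)^n\frac{4nF_n}{75}+(-1)^n\frac{52L_n}{1125}$, which is equivalent to the paper's \eqref{eq:S13} and has its own linear-in-$n$ term. So Step 3 cannot be done by substituting only the claimed formula for $S_n$. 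You must either first find and prove the closed form of $C_n$, or eliminate $C$ to get a second-order recurrence for $S_n$ and check the claimed formula against it from two base cases.

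Closing the system, on the other hand, is not an obstacle. Your second companion kernel $\cot(\pi z)\cot(\pi hz/N)\cot(\pi z/N)\csc^2(\pi z/N)$ has only simple poles at $z=Nj/h$. It gives $hC(h,N)+NC(N,h)=\frac1{45}(h^4-5h^2N^2+N^4+3)$, which is exactly Apostol's reciprocity law used in the paper, after the normalization $C(b,c)=-\frac83c^3\frs_{1,3}(1,b;c)$. Together with the relation above, this makes the system triangular, so your route needs neither the first companion kernel nor the Cassini symmetry.

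The paper quotes Apostol's law to prove \eqref{eq:S13} by induction. It then applies the Hall--Wilson--Zagier identity, which writes $\frs_{2,2}(1,b;c)$ directly in terms of $\frs_{3,1}$ and $\frs_{1,3}$ sums, so $S$ itself needs no induction. (Eliminating $S(N,h)$ between your main kernel and the first companion kernel gives an identity of that type.) Your version costs a second induction, but it is self-contained: every reciprocity law comes from one residue template instead of being cited.
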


Although this sum in particular already showed up for example in \cite{BTY12, Bor17, HKP21}, its simple closed-form expression has not been noted before. For the proof we remind of the explicit expressions for the Bernoulli polynomials
$$
B_1(t) = t-\frac12, \quad B_2(t) = t^2-t+\frac16, \quad B_3(t) = t^3-\frac32t^2+\frac12t.
$$

\begin{proof} [Proof of Theorem \ref{thm:closed_form}]
    By \eqref{eq:Bernoulli_dft} with $2s=2$ and Proposition \ref{prop:dft_sum} the desired formula is equivalent to
    \begin{align} \label{eq:S22}
        \begin{split}
            & \frs^{}_{2, 2}(1, F_{n-1}; F_n) \\
            = & \sum_{k = 0}^{F_n-1} B_2\left(\frac k{F_n}\right) B_2\left(\left\{\frac{F_{n-1} k}{F_n}\right\}\right) = \frac{n F_{2n}}{75 F_n^3} - \frac{17L_{2n}}{4500F_n^3}-(-1)^n\frac{29}{1125 F_n^3}.
        \end{split}
    \end{align}
    In general, for $\ell, m \in \bbN$ and $a, b, c \in \bbN$ with $\gcd(a, c) = \gcd(b, c) = 1$ define the \emph{generalized Dedekind sum}
    $$
    \frs^{}_{\ell, m}(a, b; c) \coloneqq \sum_{k = 0}^{c-1} B_\ell\left(\left\{\frac{a k}{c}\right\}\right) B_m\left(\left\{\frac{b k}{c}\right\}\right).
    $$
    We will show the above expression \eqref{eq:S22} for $\frs^{}_{2,2}(1, F_{n-1}; F_n)$ as well as
    \begin{align} \label{eq:S13}
	\begin{split}
	    & \frs^{}_{1, 3}(1, F_{n-1}; F_n) = (-1)^n \frs^{}_{3, 1}(1, F_{n-1}; F_n) \\
	= ~ & \frac{L_{3n}}{1500F_n^3} + (-1)^n\frac{n}{50F_n^2} - (-1)^n \frac{13L_n}{750F_n^3}
	\end{split}
    \end{align}
    via reciprocity laws for generalized Dedekind sums \cite{Apo52, Bec03, BC11, BY02, BZ04, Die57, Die84, Die96, HWZ95, Mac08, Mik57, RG72, Tak79}.
    
    We start with \eqref{eq:S13} which we will prove via induction (the relation $\frs^{}_{1, 3}(1, F_{n-1}; F_n) = (-1)^n \frs^{}_{3, 1}(1, F_{n-1}; F_n)$ being a simple consequence of the congruence
    $$
    F_{n-1}^2 \equiv (-1)^n \mod F_n
    $$
    together with the fact that $B_3(t) = -B_3(1-t)$ is odd). Both sides of \eqref{eq:S13} evaluate to $0$ if $n=2$ and we may continue with the induction step. From \cite{Apo52} we get
    $$
    4 \left( bc^3\frs^{}_{1, 3}(1, b; c) + b^3c \, \frs^{}_{1,3}(1,c;b) \right) = -\frac1{10}-\frac1{30}\left(b^4 -5b^2c^2+c^4\right)
    $$
    if $\gcd(b, c) = 1$. For $b = F_{n-1}$ and $c = F_n$, using $\frs^{}_{1, 3}(1, F_{n}; F_{n-1}) = \frs^{}_{1, 3}(1, F_{n-2}; F_{n-1})$ by $F_n-F_{n-1} = F_{n-2}$, we consequently have
    \begin{align} \label{eq:ind_step_13}
        \begin{split}
            4F_{n-1} F_n^3 \, \frs^{}_{1, 3}(1, F_{n-1}; F_n) = & -\frac1{10} - \frac1{30} \left(F_{n-1}^4 - 5 F_{n-1}^2 F_n^2 + F_n^4\right) \\
        & - 4F_{n-1}^3 F_n \, \frs^{}_{1, 3}(1, F_{n-2}; F_{n-1}).
        \end{split}
    \end{align}
    We make use of the relations (verified via \eqref{eq:binet} and the analogous $L_n = \phi^n + (-\phi)^{-n}$)
    \begin{align*}
        \frac{L_{3n}}{375} F_{n-1} & = \frac{17}{750} - \frac1{30} \left(F_{n-1}^4 - 5 F_{n-1}^2 F_n^2 + F_n^4\right) - \frac{L_{3n-3} F_n}{375} - (-1)^n \frac{2 L_{2n-1}}{125}, \\
        \frac{26L_n}{375}F_{n-1} & = (-1)^n \frac{46}{375} + \frac{2}{25} F_n F_{n-1} + \frac{26}{375} F_n L_{n-1} - \frac{2L_{2n-1}}{125},
    \end{align*}
    and the induction hypothesis to continue \eqref{eq:ind_step_13} and obtain
    \begin{align*}
        & 4F_{n-1} F_n^3 \, \frs^{}_{1, 3}(1, F_{n-1}; F_n)
        \\ =~& -\frac1{10} - \frac1{30} \left(F_{n-1}^4 - 5 F_{n-1}^2 F_n^2 + F_n^4\right) - 4F_{n-1}^3 F_n \, \frs^{}_{1, 3}(1, F_{n-2}; F_{n-1}) \\
        = ~& -\frac1{10} - \frac1{30} \left(F_{n-1}^4 - 5 F_{n-1}^2 F_n^2 + F_n^4\right) \\
        & - 4 F_n \left(\frac{L_{3n-3}}{1500} + (-1)^{n-1}\frac{(n-1)F_{n-1}}{50} - (-1)^{n-1} \frac{13L_{n-1}}{750}\right) \\
        = ~& \frac{17}{750} - \frac1{30} \left(F_{n-1}^4 - 5 F_{n-1}^2 F_n^2 + F_n^4\right) - 4 F_n \cdot \frac{L_{3n-3}}{1500} - (-1)^n \frac{2 L_{2n-1}}{125} \\
        & - 4 F_n \cdot (-1)^{n-1} \frac{n F_{n-1}}{50} \\
        & -\frac{46}{375} -4F_n \left(-(-1)^{n-1} \frac{F_{n-1}}{50} - (-1)^{n-1} \frac{13 L_{n-1}}{750}\right) + (-1)^n \frac{2 L_{2n-1}}{125} \\
        = ~& \frac{L_{3n}}{375} F_{n-1} + (-1)^n\frac{2n F_n}{25} F_{n-1} - (-1)^n \frac{26L_n }{375} F_{n-1}.
    \end{align*}
    Division by $4 F_{n-1} F_n^3$ gives \eqref{eq:S13}.

    We will deduce \eqref{eq:S22} from the now established \eqref{eq:S13}. 
    From \cite{HWZ95} we use the relation
    \begin{align*} 
        \begin{split}
            & \frac1{4b} \, \frs^{}_{2,2}(1, b; c) \\
        = ~ & \frac16\left(\frs^{}_{3, 1}(1, b; c) + \frs^{}_{3, 1}(1, c; b)\right) + \frac1{6b^2} \frs^{}_{1,3}(1, b; c) + \frac1{720 b^3 c^3} + \frac b{720 c^3} + \frac c{240 b^3}
        \end{split}
    \end{align*}
    for relatively prime integers $\gcd(b, c) = 1$. For $b=F_{n-1}$ and $c=F_n$ this gives
    \begin{align*}
        & \frac1{4F_{n-1}} \frs^{}_{2,2}(1, F_{n-1}; F_n) \\
        = ~ & \frac16\left(\frs^{}_{3, 1}(1, F_{n-1}; F_n) + \frs^{}_{3, 1}(1, F_{n-2}; F_{n-1})\right) + \frac1{6F_{n-1}^2} \frs^{}_{1,3}(1, F_{n-1}; F_n) \\
        & + \frac1{720 F_{n-1}^3 F_n^3} + \frac {F_{n-1}}{720 F_n^3} + \frac {F_n}{240 F_{n-1}^3}.
    \end{align*}
    All that remains is to insert \eqref{eq:S13} into this expression and to simplify accordingly as to obtain \eqref{eq:S22} (like above, using \eqref{eq:binet} and the analogous relation for $L_n$). These calculations are elementary but lengthy, so we skip them here.
\end{proof}

In terms of $e_{2, p}(\Phi_n)$ we obtain
$$
e_{2, p}(\Phi_n)^2 = \frac p{6F_n^2} + \frac{p^2}{300F_n^4} \left(nF_{2n} - \frac{17}{60} L_{2n} - (-1)^n \frac{29}{15}\right).
$$
Even more, via computer calculations we observe
\begin{align*}
    & \sum_{m=1}^{F_n-1} \frac{2+4\cos(\pi m/F_n)^2}{\sin(\pi m/F_n)^4} \frac{2+4\cos(\pi F_{n-1}m/F_n)^2}{\sin(\pi F_{n-1}m/F_n)^4} \\
    = ~& \frac{32n}{1875} F_{4n} - \frac{196}{28125} L_{4n} + (-1)^n \frac{256n}{1875} F_{2n} - (-1)^n \frac{3776}{28125} L_{2n} - \frac{7556}{28125}
\end{align*}
and
\begin{align*}
    & \sum_{m=1}^{F_n-1} \frac{16+88\cos(\pi m/F_n)^2+16\cos(\pi m/F_n)^4}{\sin(\pi m/F_n)^6} \\
    & ~~~~~~ \times \frac{16+88\cos(\pi F_{n-1}m/F_n)^2+16\cos(\pi F_{n-1}m/F_n)^4}{\sin(\pi F_{n-1}m/F_n)^6} \\
    = ~& \frac{68608n}{984375} F_{6n} + \frac{59606528}{20155078125} L_{6n} + (-1)^n \frac{548864n}{328125} F_{4n} - (-1)^n \frac{2876091392}{2239453125} L_{4n} \\
	& + \frac{68608n}{13125} F_{2n} - \frac{128925952}{17915625} L_{2n} - (-1)^n \left( \frac{1909649408}{161240625} + (-1)^n \frac{256}{3969} \right),
\end{align*}
by Proposition \ref{prop:dft_sum} and \eqref{eq:Bernoulli_dft} corresponding to $\frs^{}_{4,4}(1, F_{n-1}; F_n)$ and $\frs^{}_{6, 6}(1, F_{n-1}; F_n)$ respectively. Similar inductive proofs as above seem possible for these sums too, but the complexity of the according calculations rises quickly as the degrees of the corresponding Dedekind sums increases. Furthermore, we observe
\begin{align*}
	& \sum_{m=1}^{F_n-1} \frac1{\sin(\pi m/{F_n})^4} \frac1{\sin(\pi F_{n-1} m/{F_n})^4} \\
	= ~ & \frac{8n}{16875} F_{4n} + \frac{2357}{1771875} L_{4n} + \left(\frac{16}{675} + (-1)^n \frac{64}{16875}\right) n F_{2n} \\
	& - \left(\frac{676}{70875} + (-1)^n \frac{7408}{1771875} \right) L_{2n} - \left(\frac{147023}{1771875} + (-1)^n \frac{1616}{23625} \right)
\end{align*}
and
\begin{align*}
	& \sum_{m=1}^{F_n-1} \frac{\cos(\pi m/{F_n})^2}{\sin(\pi m/{F_n})^4} \frac{{\cos(\pi F_{n-1} m/{F_n})^2}}{\sin(\pi F_{n-1} m/{F_n})^4} \\
	= ~ & \frac{8n}{16875} F_{4n} - \frac{1693}{1771875} L_{4n} + \left(\frac{4}{675} + (-1)^n \frac{64}{16875}\right) n F_{2n} \\
	& - \left(\frac{19}{70875} + (-1)^n \frac{6208}{1771875} \right) L_{2n} - \left(\frac{11948}{1771875} + (-1)^n \frac{4}{23625} \right).
\end{align*}
While it seems possible to give a general closed-form expression for sums of this type, we do not have such a universal approach at this point. Similar closed-form expressions for the usual Dedekind sums (that is involving Bernoulli polynomials of degree $1$) have been studied in \cite{Cet23, DM10, DEKT22, LZ13, San23, ZY00, ZW04}. As for Dedekind sums of higher degree, we are only aware of \cite{Tak79}, where a formula for $\frs_{1,m}(1,b;c)$ in terms of the continued fraction expansion of $b/c$ is given. This, however, is still less explicit than the expression in \eqref{eq:S13} above.

\section*{Acknowledgment}

The authors were partially funded by the ESF Plus Young Researchers Group ReSIDA-H2 (SAB, 10064931). Nicolas Nagel acknowledges the support of the Austrian Science Fund (FWF) Project  P 34808/Grant DOI: 10.55776/P34808. For open access purposes, the authors have applied a CC BY public copyright license to any author accepted manuscript version arising from this submission.

We would like to thank Bence Borda and Dmitriy Bilyk for helpful comments in the early stages of this manuscript. We would also like to thank Friedrich Pillichshammer and Robert Tichy for useful suggestions.

\section*{Disclosure statement}

The authors report there are no competing interests to declare.

\bibliographystyle{plain}
\bibliography{refs}

\appendix

\section{Auxiliary results}

\subsection{Floor function}

Here we collect some results concerning the floor function that are used throughout Section \ref{sec:wythoff}. Similar relations have been proven and used for example in \cite{Con59, Kim95, Kim20, Mor80}. Recall that for $x \in \bbR$ and $n \in \bbZ$ we have $\lfloor x \rfloor = n$ if and only if $x-n \in [0, 1)$.

\begin{lem} \label{lem:floor}
For all $N \in \bbN$ the following equalities and inequalities hold:
    \begin{itemize}
        \item [(i)] $\phi \lfloor \phi N\rfloor < \lfloor \phi^2 N\rfloor$,

        \item [(ii)] $\lfloor \phi^2 N \rfloor = \lfloor \phi \lfloor \phi N\rfloor + \phi^{-1}\rfloor$,

        \item [(iii)] $N + \lfloor \phi N + \phi^{-1}\rfloor = \lfloor \phi \lfloor \phi N + \phi^{-1}\rfloor + \phi^{-1}\rfloor$,

        \item [(iv)] $\frac12 \left\lfloor \phi (2N-1) + \phi^{-1} \right\rfloor < \left\lfloor \phi N + \phi^{-1} \right\rfloor < \frac12 \left\lfloor \phi (2N+1) + \phi^{-1} \right\rfloor$.
    \end{itemize}
\end{lem}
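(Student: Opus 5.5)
The four items all rest on the basic fact that for $x\in\bbR$, $n\in\bbZ$ we have $\lfloor x\rfloor=n$ iff $x-n\in[0,1)$. Together with $\phi^2=\phi+1$ and $\phi^{-1}=\phi-1$, this reduces each claim to locating a fractional part. Throughout write $N\in\bbN$, $a\coloneqq\{\phi N\}\in(0,1)$ (nonzero by irrationality), so that $\lfloor\phi N\rfloor=\phi N-a$.

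For (i), compute $\phi\lfloor\phi N\rfloor=\phi^2N-\phi a=N+\phi N-\phi a$. Since $\lfloor\phi^2N\rfloor=N+\lfloor\phi N\rfloor=N+\phi N-a$, the difference is
$$
\lfloor\phi^2N\rfloor-\phi\lfloor\phi N\rfloor=(\phi-1)a=\phi^{-1}a>0,
$$
which proves (i). For (ii), the same computation gives
$$
\phi\lfloor\phi N\rfloor+\phi^{-1}-\lfloor\phi^2N\rfloor=\phi^{-1}(1-a)\in(0,\phi^{-1})\subset[0,1),
$$
so (ii) follows by the basic fact.

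For (iii), set $M\coloneqq\lfloor\phi N+\phi^{-1}\rfloor$ and $b\coloneqq\phi N+\phi^{-1}-M\in[0,1)$. Then $\phi M+\phi^{-1}=\phi^2N+1-\phi b+\phi^{-1}$, and $N+M=\phi^2N+\phi^{-1}-b$ because $N+\phi N=\phi^2N$. Hence
$$
\phi M+\phi^{-1}-(N+M)=1-(\phi-1)b=1-\phi^{-1}b\in(1-\phi^{-1},1]=(\phi^{-2},1].
$$
The only delicate point is the endpoint $1$, which occurs iff $b=0$. That would mean $\phi N+\phi^{-1}\in\bbZ$, i.e.\ $\phi(N+1)\in\bbZ+1$, which is impossible by irrationality of $\phi$. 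So $b>0$, the expression lies in $(\phi^{-2},1)$, and (iii) holds.

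For (iv), write $g(x)\coloneqq\lfloor\phi x+\phi^{-1}\rfloor$. The right inequality $2g(N)<g(2N+1)$ is equivalent to $2g(N)\le g(2N+1)-1$, i.e.\ to $2g(N)+1\le\phi(2N+1)+\phi^{-1}$, because $2g(N)+1$ is an integer. Using $g(N)\le\phi N+\phi^{-1}-b$ with $b\in(0,1)$ as above, it suffices that
$$
2\phi N+2\phi^{-1}-2b+1\le 2\phi N+\phi+\phi^{-1},
$$
i.e.\ $\phi^{-1}+1-2b\le\phi=1+\phi^{-1}$, which holds since $b\ge0$. The left inequality $g(2N-1)<2g(N)$ similarly amounts to $\phi(2N-1)+\phi^{-1}<2g(N)=2\phi N+2\phi^{-1}-2b$, i.e.\ $2b<\phi+\phi^{-1}=\sqrt5$, which holds since $b<1$. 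A careful pass is needed here to confirm that the strictness requirements and the integrality rounding point in the right directions in both inequalities. That check, together with excluding the boundary case in (iii), is the main obstacle; the rest is linear algebra in $\phi$.
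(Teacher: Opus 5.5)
Your proposal is correct and follows essentially the same route as the paper: each item is reduced, via $\phi^2=\phi+1$ and the characterization $\lfloor x\rfloor=n$ iff $x-n\in[0,1)$, to locating the same explicit quantities (your $\phi^{-1}(1-a)$ and $1-\phi^{-1}b$ are exactly the paper's $\phi^{-1}\lfloor\phi N\rfloor+\phi^{-1}-N$ and $\phi^{-1}(\lfloor\phi N+\phi^{-1}\rfloor+1)-N$), with irrationality of $\phi$ excluding the endpoint in (iii). Part (iv) likewise rests on the identity $2\phi^{-1}+1=\phi+\phi^{-1}$ and on $\sqrt5>2$. The strictness and rounding check you flag for (iv) is already complete in what you wrote, since for an integer $k$ one has $k\le\lfloor y\rfloor\iff k\le y$ and $\lfloor y\rfloor<k\iff y<k$.
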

\begin{proof}
    We will use the relation $\phi^2=\phi+1$ and variations thereof as well as the irrationality of $\phi$ multiple times throughout the proof.

    \textit{(i):} Starting with $\phi^{-1} \lfloor \phi N\rfloor < N$ we have
    $$
    \phi \lfloor \phi N\rfloor = \phi^{-1} \lfloor \phi N\rfloor + \lfloor \phi N\rfloor < N + \lfloor \phi N\rfloor = \lfloor \phi^2 N\rfloor.
    $$

    \textit{(ii):} From $-1 < \lfloor \phi N\rfloor - \phi N < 0$ we get $\phi^{-1}\lfloor \phi N \rfloor + \phi^{-1} - N \in (0, \phi^{-1})$. This gives $\lfloor \phi^{-1}\lfloor \phi N \rfloor + \phi^{-1} \rfloor = N$ and thus
    $$
    \left\lfloor \phi^2 N \right\rfloor = \lfloor \phi N \rfloor + N = \lfloor \phi N \rfloor + \left\lfloor \phi^{-1} \lfloor \phi N\rfloor + \phi^{-1} \right\rfloor = \left\lfloor \phi \lfloor \phi N \rfloor + \phi^{-1} \right\rfloor.
    $$

    \textit{(iii):} Since $\phi N + \phi^{-1} = \frac{N-1}{2} + \frac{N+1}{2}\sqrt5 \notin \bbN$ we have
	$$
	\phi N - \phi^{-2} < \left\lfloor \phi N + \phi^{-1}\right\rfloor < \phi N + \phi^{-1},
	$$
	so that
	\begin{align*}
		\phi \left\lfloor \phi N + \phi^{-1}\right\rfloor + \phi^{-1} - \left(\left\lfloor \phi N + \phi^{-1}\right\rfloor + N\right)
		= \phi^{-1} \left( \left\lfloor \phi N + \phi^{-1}\right\rfloor + 1\right) - N
	\end{align*}
	lies in the interval $(\phi^{-2}, 1)$. This gives the claim.

    \textit{(iv):} For the left inequality note the following sequence of implications:
    \begin{align*}
         & -\phi+\phi^{-1} < 2\phi^{-1} - 2 \\
         \Rightarrow \quad & \phi(2N-1)+\phi^{-1} < 2\phi N+2\phi^{-1}-2 \\
         \Rightarrow \quad & \lfloor \phi(2N-1)+\phi^{-1}\rfloor < 2\phi N +2\phi^{-1}-2 \\
         \Rightarrow \quad & \frac12\lfloor\phi(2N-1)+\phi^{-1}\rfloor+1<\phi N+\phi^{-1} \\
         \Rightarrow \quad & \frac12\lfloor\phi(2N-1)+\phi^{-1}\rfloor < \lfloor \phi N+\phi^{-1} \rfloor.
    \end{align*}
    For the right inequality we similarly have
    \begin{align*}
         & 2\phi^{-1} + 1 = \phi+\phi^{-1} \\
         \Rightarrow \quad & 2\phi N+2\phi^{-1} + 1 = \phi (2N+1)+\phi^{-1} \\
         \Rightarrow \quad & 2\phi N + 2\phi^{-1} < \lfloor \phi (2N+1)+\phi^{-1} \rfloor \\
         \Rightarrow \quad & \phi N + \phi^{-1} < \frac12 \lfloor \phi (2N+1)+\phi^{-1} \rfloor \\
         \Rightarrow \quad & \lfloor\phi N + \phi^{-1}\rfloor < \frac12 \lfloor \phi (2N+1)+\phi^{-1} \rfloor.
    \end{align*}
\end{proof}

The following results are related to how well the golden ratio $\phi$ can be approximated by rational numbers.

\begin{lem} \label{lem:floor_ineq}
    For all $i \in \bbN$ it holds
    $$
    1+\frac1{(\phi+2) i^2} < \frac{\phi i}{\lfloor \phi i\rfloor} \quad \text{and} \quad \frac{\phi i}{\lfloor \phi i\rfloor+1} \leq 1 - \frac1{2\phi^2 i^2}.
    $$
\end{lem}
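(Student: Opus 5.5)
The plan is to prove both inequalities with the standard norm argument for $\bbZ[\phi]$: the quantity $|p^2-pq-q^2|$ is a nonzero integer for integers $p,q\ge1$, because $\phi$ is irrational. Throughout fix $q \coloneqq i$ and $p \coloneqq \lfloor \phi i\rfloor$, so that $p < \phi q < p+1$. The key identity is the factorization
$$
(p-\phi q)\left(p+\phi^{-1} q\right) = p^2 - pq - q^2,
$$
which follows from $\phi - \phi^{-1} = 1$. Since the right-hand side is a nonzero integer, for any integer $p'>0$ we get $|p' - \phi q| \ge 1/(p' + \phi^{-1} q)$.

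For the first inequality, rewrite it as $\phi q - p > p/((\phi+2)q^2)$. By the identity, $\phi q - p \ge 1/(p+\phi^{-1}q)$, so it suffices to show $(\phi+2)q^2 > p^2 + \phi^{-1}pq$. Using $p<\phi q$ we get $p^2 + \phi^{-1}pq < \phi^2 q^2 + q^2 = (\phi+2)q^2$, which is exactly what is needed, with strictness coming from $p < \phi q$. This holds for every $i$ with no case distinction.

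For the second inequality, set $p' \coloneqq p+1 > \phi q$ and rewrite the claim as $p' - \phi q \ge p'/(2\phi^2 q^2)$. The identity again gives $p'-\phi q \ge 1/(p'+\phi^{-1}q)$, so it suffices to show $2\phi^2 q^2 \ge p'^2 + \phi^{-1} p' q$. Inserting $p' < \phi q + 1$ and using $\phi^2=\phi+1$ bounds the right-hand side by $(\phi+2)q^2 + (2\phi+\phi^{-1})q + 1$. Since $2\phi^2 = 2\phi+2$, the requirement reduces to $\phi q^2 \ge (2\phi+\phi^{-1})q+1$, and this holds for all $q \ge 3$. The small cases are the main obstacle, because the inequality is tight there. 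For $i=1$ we have $\lfloor\phi\rfloor+1=2$, and since $\phi^{-2}=2-\phi$, the right-hand side equals $1-\tfrac{2-\phi}{2} = \phi/2$, so equality holds; this is why the statement uses ``$\leq$''. For $i=2$ we have $\lfloor 2\phi\rfloor + 1 = 4$, and one checks directly that $2\phi/4 \approx 0.809 \le 1 - \tfrac{1}{8\phi^2} \approx 0.952$. Together these cases prove the lemma for all $i \in \bbN$.
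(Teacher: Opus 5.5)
Your proof is correct and is essentially the paper's argument: both rest on $|p^2-pq-q^2|\ge 1$ for the integers $p,q$ in play. The paper phrases this as $\{\phi i\}>1/(\sqrt5 i)$ and $1-\{\phi i\}\ge 1/(\phi^2 i)$, which is your factorization with the conjugate factor bounded by $\sqrt5 i$ and $\phi^2 i$ respectively. The only difference is in the second inequality, where the paper uses $p'=\lfloor\phi i\rfloor+1\le 2i$ instead of your $p'<\phi i+1$; this gives $p'^2+\phi^{-1}p'q\le(4+2\phi^{-1})q^2=2\phi^2q^2$ for every $q$, so no separate check of $i=1,2$ is needed.
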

\begin{proof}
    Let $i \in \bbN$. The first inequality is easily seen to be implied by the stronger statement
    \begin{align} \label{ineq:stronger_0}
        \{\phi i\} > \frac1{\sqrt5 i}.
    \end{align}
    To show this inequality note that, using $\lfloor \phi i\rfloor = \phi i - \{\phi i\}$,
    $$
    \sqrt5i\{\phi i\} - (i^2+i\lfloor\phi i\rfloor - \lfloor \phi i\rfloor^2) = \{\phi i\}^2 > 0,
    $$
    so $\sqrt5i\{\phi i\} > i^2+i\lfloor\phi i\rfloor - \lfloor \phi i\rfloor^2$. The expression on the right hand side is an integer which by
    $$
    i^2+i\lfloor\phi i\rfloor - \lfloor \phi i\rfloor^2 = (i-\phi^{-1}\lfloor\phi i\rfloor) (i+\phi\lfloor\phi i\rfloor) > 0
    $$
    (since $\phi$ is irrational) is positive, that is at least $1$. This gives \eqref{ineq:stronger_0} and thus the first inequality of the statement.

    For the second inequality, which is implied by (using $1+\lfloor \phi i\rfloor \leq 2 i$)
    \begin{align} \label{ineq:stronger_1}
        1-\{\phi i\} \geq \frac1{\phi^2 i},
    \end{align}
    we proceed in a similar manner to above. Noting that $\lceil \phi i\rceil = \lfloor\phi i\rfloor + 1 = \phi i + 1 - \{\phi i\}$ by irrationality of $\phi$, this follows from
    $$
    \phi^2 i (1 - \{\phi i\}) - (\lceil \phi i\rceil^2 - i \lceil \phi i\rceil - i^2) = (1-\{\phi i\}) (\phi^{-2} i - (1-\{\phi i\})) \geq 0,
    $$
    so that analogously to above we get
    $$
    \phi^2 i (1 - \{\phi i\}) \geq \lceil \phi i\rceil^2 - i \lceil \phi i\rceil - i^2 \geq 1,
    $$
    giving \eqref{ineq:stronger_1}. This establishes both inequalities in the statement.
\end{proof}

Note that the auxiliary sequences $i^2+i\lfloor\phi i\rfloor - \lfloor \phi i\rfloor^2$ and $\lceil \phi i\rceil^2 - i \lceil \phi i\rceil - i^2$ together already showed up in \cite{Kim075}.

\subsection{Basic inequalities}

Here we collect some inequalities from elementary calculus that are used throughout Section \ref{sec:asymp}.

\begin{lem} \label{lem:calc_ineqs}
    The following inequalities hold:
    \begin{itemize}
        \item [(i)] For $0 < x \leq \frac23$ it holds
        $$
        \frac1{\sin(\pi x)} \leq \frac{1}{x}.
        $$

        \item [(ii)] For $\sigma > 1$ and $0 < x, y \leq \frac23$ it holds
        $$
        \left|\frac1{\sin(\pi x)^\sigma} - \frac1{\sin(\pi y)^\sigma}\right| \leq \sigma \pi^\sigma \left(\frac1x + \frac1y\right)^{\sigma-1} \left|\frac1x - \frac1y\right|.
        $$

        \item [(iii)] For $\sigma > 1$ and $0 < x \leq \frac23$ it holds
        $$
        \left(\frac{\pi x}{\sin(\pi x)}\right)^{\sigma}-1 \leq  4^{\sigma+1} x^2.
        $$

        \item [(iv)] For $\sigma > 1$ and $|x| \leq \frac23$ it holds
        $$
        \left|\left(\frac1{1-x}\right)^\sigma - 1\right| \leq 4^\sigma |x|.
        $$
    \end{itemize}
\end{lem}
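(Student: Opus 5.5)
The four inequalities are independent elementary calculus facts, and I will prove them one at a time. Throughout I write $t = \pi x$ and use three standard facts: concavity of $\sin$ on $[0,\pi]$, the Taylor bound $t - \sin t \leq t^3/6$ for $t \geq 0$, and the mean value theorem.

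\emph{(i).} The map $x \mapsto \sin(\pi x)$ is concave on $[0, 1]$ and vanishes at $0$. On $[0, \frac23]$ it therefore lies above the chord joining $(0,0)$ to $(\frac23, \frac{\sqrt3}{2})$. That chord has slope $\frac{3\sqrt3}{4} > 1$, so $\sin(\pi x) \geq x$ there, which is the claim.

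\emph{(ii).} Put $a = 1/\sin(\pi x)$ and $b = 1/\sin(\pi y)$. The mean value theorem for $u \mapsto u^\sigma$ gives $|a^\sigma - b^\sigma| \leq \sigma \max\{a,b\}^{\sigma-1}|a-b| \leq \sigma (a+b)^{\sigma-1}|a-b|$. By (i), $(a+b)^{\sigma-1} \leq (\frac1x + \frac1y)^{\sigma-1}$. For $|a-b|$, write $a = \psi(1/x)$ with $\psi(u) = 1/\sin(\pi/u)$ for $u \geq \frac32$. Then
$$
\psi'(u) = \frac{\pi\cos(\pi/u)}{u^2\sin(\pi/u)^2},
$$
and (i) applied to $1/u \leq \frac23$ gives $\sin(\pi/u) \geq 1/u$, hence $|\psi'(u)| \leq \pi$. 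The mean value theorem then yields $|a-b| \leq \pi|\frac1x - \frac1y|$, and $\pi \leq \pi^\sigma$ finishes (ii).

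\emph{(iii).} Put $s = \frac{t}{\sin t} - 1 \geq 0$, with $t \in (0, \frac{2\pi}{3}]$. Since $t/\sin t$ is increasing on this interval, $1+s \leq \frac{2\pi/3}{\sqrt3/2} < 2.42$. The Taylor bound gives
$$
s = \frac{t - \sin t}{\sin t} \leq \frac{t^2}{6}\cdot\frac{t}{\sin t} \leq \frac{\pi^2 \cdot 2.42}{6}x^2 < 4x^2 .
$$
With $L = \sigma\log(1+s) \leq \sigma s$, the inequality $e^L - 1 \leq L e^L$ gives
$$
\left(\frac{\pi x}{\sin(\pi x)}\right)^\sigma - 1 \leq 4\sigma\, 2.42^\sigma x^2 .
$$
It remains to check $\sigma \cdot 2.42^\sigma \leq 4^\sigma$, that is $\sigma \leq (4/2.42)^\sigma \approx 1.65^\sigma$. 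The function $1.65^\sigma/\sigma$ has its minimum near $\sigma = 1/\log 1.65 \approx 2$, where it is about $1.36 > 1$, so the inequality holds for all $\sigma > 1$. This is the step where the constants are tightest. If the numerical slack turns out too thin, I would instead split into $\sigma \leq 3$ and $\sigma > 3$ and bound each range crudely.

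\emph{(iv).} Let $g(x) = (1-x)^{-\sigma} - 1$. It is convex on $(-\infty, 1)$ with $g(0) = 0$. For $-\frac23 \leq x \leq 0$ we have $1 - x \geq 1$, so $|g'| \leq \sigma$ there; the mean value theorem gives $|g(x)| \leq \sigma|x| \leq 4^\sigma|x|$. For $0 \leq x \leq \frac23$, convexity makes $g(x)/x$ nondecreasing, so
$$
g(x) \leq \frac{g(2/3)}{2/3}\,x = \frac32(3^\sigma - 1)\,x .
$$
It remains to verify $h(\sigma) \coloneqq 4^\sigma - \frac32 3^\sigma + \frac32 \geq 0$ for $\sigma > 1$. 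We have $h(1) = 1 > 0$. For $\sigma \geq \log(3/2)/\log(4/3) \approx 1.41$ the inequality is immediate, since then $4^\sigma \geq \frac32 3^\sigma$. On the short window $\sigma \in [1, 1.41]$, $h$ stays positive because $4^\sigma$ grows faster than $\frac32 3^\sigma$ there: $h'(\sigma) = 4^\sigma\log 4 - \frac32 3^\sigma\log 3 > 0$ already at $\sigma = 1$, and the ratio $(4/3)^\sigma$ increases. This small-$\sigma$ check is the only delicate point in (iv).
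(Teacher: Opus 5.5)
Your proof is correct. Parts (i) and (iv) follow the paper: the chord of the concave function $\sin(\pi x)$ on $[0,\frac23]$ for (i), and for (iv) convexity of $(1-x)^{-\sigma}-1$, using the secant on $[0,\frac23]$ and the slope bound $\sigma$ on $[-\frac23,0]$. The one addition is that you actually verify $\frac32(3^\sigma-1)\le 4^\sigma$, which the paper only asserts. Your derivative observation already settles this for all $\sigma\ge1$: since $(4/3)^\sigma\ln 4\ge\frac43\ln 4>\frac32\ln 3$, $h$ is increasing on $[1,\infty)$, so the split at $\sigma\approx1.41$ is unnecessary.

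The genuine differences are in (ii) and (iii). In (ii) the paper stays in the original variables. It applies the mean value theorem to $\sin(\pi\,\cdot)^\sigma$ and bounds $\sin(\pi\xi)\le\pi\xi$, which is where the factor $\pi^\sigma$ comes from. It then uses (i) to absorb $x^\sigma/\sin(\pi x)^\sigma$ and $y^\sigma/\sin(\pi y)^\sigma$. You instead pass to the reciprocal variables and chain two mean value theorems: one for $u\mapsto u^\sigma$, and one for $\psi(u)=1/\sin(\pi/u)$ with $|\psi'|\le\pi$ by (i). This is equally short and gives the sharper constant $\sigma\pi$.

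In (iii) the paper uses $\sin t\ge t-t^3/6$ to reduce to $\bigl(1-\frac{\pi^2}{6}x^2\bigr)^{-\sigma}-1$. The power series of $x^{-2}\bigl((1-\frac{\pi^2}{6}x^2)^{-\sigma}-1\bigr)$ has positive coefficients, so this function is increasing. Hence it is bounded by its value $\frac94\bigl((1-\frac{2\pi^2}{27})^{-\sigma}-1\bigr)$ at $x=\frac23$, and $(1-\frac{2\pi^2}{27})^{-1}\approx 3.72<4$ finishes with no $\sigma$-dependent comparison. Your route bounds $t/\sin t\le 4\pi/(3\sqrt3)<2.42$, then $s<4x^2$, then uses $e^L-1\le Le^L$. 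The last step costs an extra factor $\sigma$, which is why you need $\sigma\le(4/2.42)^\sigma$. Your check via $\min_{\sigma>0}c^\sigma/\sigma=e\ln c\approx 1.37>1$ is correct, so the fallback case split is not needed. The paper's monotonicity argument is simply the cleaner way to avoid that factor.
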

\begin{proof}
    \textit{(i):} The function $\sin(\pi x)$ is concave on the interval $x \in [0, \frac23]$ (by non-positivity of its second derivative there), so that
    $$
    \sin(\pi x) \geq \frac{3}{2} \sin\left(\frac{2\pi}3\right) x = \frac{3\sqrt3}{4} x \geq x
    $$
    for all $0 \leq x \leq \frac23$. The statement now follows by taking the inverses.

    \textit{(ii):} 
    If $x=y$, both sides are 0 and the inequality is obviously true. Therefore, without loss of generality let $x<y$ and consider the ratio
    $$R(x,y) \coloneqq \frac{\left|\frac1{\sin(\pi x)^\sigma} - \frac1{\sin(\pi y)^\sigma}\right|}{\left(\frac1x + \frac1y\right)^{\sigma-1} \left|\frac1x - \frac1y\right|} = \frac{x^{\sigma}y^\sigma |\sin(\pi y)^\sigma-\sin(\pi x)^\sigma|}{\sin(\pi x)^\sigma\sin(\pi y)^\sigma (x+y)^{\sigma-1}|y-x|}.$$
    By (i), $x^\sigma\sin(\pi x)^{-\sigma}\leq 1$, respectively for $y$. Furthermore, the mean value theorem guarantees the existence of a $\xi\in[x,y]$ with
    $$
    \sin(\pi y)^{\sigma}-\sin(\pi x)^{\sigma} = \sigma\pi\sin(\pi \xi)^{\sigma-1}\cos(\pi\xi)(y-x),
    $$
    so that $$\left| \frac{\sin(\pi y)^\sigma-\sin(\pi x)^\sigma}{y-x}\right|=|\sigma\pi\sin(\pi \xi)^{\sigma-1}\cos(\pi\xi)|\leq\sigma\pi(\pi\xi)^{\sigma-1}\leq\sigma\pi^\sigma (x+y)^{\sigma-1}$$ since $\sin(\pi x)\leq \pi x$ for $x\geq0$ (by concavity, considering the tangent at $x=0$), $\sigma-1>0$ and $\xi\leq y < x+y$. 
    All in all, it follows that $R(x,y)\leq \sigma\pi^\sigma$, hence the assertion.

    \textit{(iii):} Consider the function $(0, \frac{2}{3}] \rightarrow\bbR, x\mapsto x^{-2}((1-\frac{\pi^{2}x^{2}}{6})^{-\sigma}-1)$. We have the expansion
    $$
    \frac1{x^2}\left(\frac1{\left(1-\frac{\pi^2}6 x^2\right)^\sigma}-1\right) = \sum_{n=0}^\infty {n+\sigma \choose n+1} \left(\frac{\pi^2}{6}\right)^{n+1} x^{2n}
    $$
    for $0 < x \leq \frac23$ and by positivity of the coefficients it becomes apparent that this function is increasing on $(0, \frac23]$. One then has
    $$
    \frac1{x^2}\left(\frac1{\left(1-\frac{\pi^2}6 x^2\right)^\sigma}-1\right) \leq \frac1{(2/3)^2}\left(\frac1{\left(1-\frac{\pi^2}6 (2/3)^2\right)^\sigma}-1\right) \leq  4^{\sigma+1}
$$
for $0 < x \leq \frac23$.
    
    Now it is easily seen that $\sin(\pi x)\geq\pi x -\frac{\pi^{3}x^{3}}{6}$ for all $x\geq 0$ (equality for $x=0$ and consider the derivatives up to order 3). Consequently, for all $x\in(0,\frac{2}{3}]$ we obtain the desired
    \begin{align*}
        \left(\frac{\pi x}{\sin(\pi x)}\right)^{\sigma}-1 &\leq \left(\frac{\pi x}{\pi x -\frac{\pi^{3}x^{3}}{6}}\right)^{\sigma}-1 = \left(1-\frac{\pi^{2}x^{2}}{6}\right)^{-\sigma}  -1 \leq 4^{\sigma+1} x^{2}.
    \end{align*}

    \textit{(iv):} The function $(-\infty, 1) \ra \bbR, x \mapsto (1-x)^{-\sigma}-1$ is convex in this range (by a simple check of its second derivative), in particular for all $0 \leq x \leq \frac23$ it holds
    $$
    \left(\frac1{1-x}\right)^\sigma - 1 \leq \frac32 (3^\sigma-1)x \leq 4^\sigma x.
    $$
    On the other hand, $1-(1-x)^{-\sigma}$ is concave for $x \in (-\infty, 1)$ (as the negative of the convex function above). By considering the tangent at $x=0$ we thus get for $x\leq0$
    $$
    1-\left(\frac1{1-x}\right)^\sigma \leq -\sigma x \leq -4^\sigma x.
    $$
    This fully establishes the desired inequality.
\end{proof}

\end{document}